\newtheorem{thm}{Theorem}[section]
\newtheorem{cor}[thm]{Corollary}
\newtheorem{lem}[thm]{Lemma}
\newtheorem{prop}[thm]{Proposition}
\newcommand{\bea}{\begin{eqnarray}}
\newcommand\numberthis{\addtocounter{equation}{1}\tag{\theequation}}
\newcommand{\eea}{\end{eqnarray}}
\newcommand{\beast}{ \begin{eqnarray*} }
\newcommand{\eeast}{ \end{eqnarray*} }
\newcommand{\abs}[1]{\left\vert#1\right\vert}
\begin{document}

\title{A collision-time estimate for diffusion and Ornstein-Uhlenbeck particles}

\author{Guolong Li}

\date{}
\maketitle

\section*{Abstract}
We consider two different models for colloidal particles. In the first model, we consider their free motions to be diffusions while in the second model we take them to be integrated Ornstein-Uhlenbeck processes. In both models, we derived collision estimates for pairs of particles. In particular, we found that these estimates would be different to the Brownian case even when the free motions of the particles are Brownian at macroscopic scales. As a consequence, the coagulation kernels and diffusivities in the coagulation-diffusion equations would also be affected accordingly. We then proved that there exists a unique solution to the coagulation-diffusion equations in these cases under physically reasonable assumptions. 
\section{Introduction}
In this paper, we investigate the dynamics for large clouds of colloidal particles whose free motions
are Brownian at macroscopic scales, but are not Brownian on the scale of the particles themselves.
The parameters of our processes will be chosen so that each particle follows a free path for a time
of order one between collisions. The macroscopic free path may therefore be considered as Brownian.
However the actual collisions, being determined by microscopic dynamics, will be strongly affected
by the departure of the free motion from the Brownian case.

We make two new contributions. The first is to prove collision estimates for pairs of particles. Having
in mind eventually a system of $N$ particles, where $N$ is large, we choose the scale of the particle radius 
so that any given pair meets in order one time with probability of order $1/N$.
The mass and radius of each particle will also affect the characteristics of its free motion, in a way
we shall take as given, based on some physical arguments.

We prove collision estimates in two cases. In the first case, the particle, in addition to its basic
molecular diffusivity, is considered as suspended in an incompressible fluid, through which it acquires a drift, 
which we shall take to be periodic.
In the second case, following the derivation of physical Brownian motion from particle dynamics, we suppose
that the free motion is an integrated Ornstein--Uhlenbeck process.
In both cases, it is well understood that, under appropriate scalings, the macroscopic motions are Brownian, with diffusivities depending on the sizes of the particles. Our analysis shows how the small-scale motions, in both
cases, lead to strong departures from the Brownian case for the collision probabilities.
More precisely, consider two particles in $\mathbb{R}^d$ with $d\geq 3$, having radius $r$, evolving under dynamics which is approximately Brownian with constant diffusivity $a$, but where a departure from Brownian behaviour is visible on a length scale of order $\lambda$. We investigate the collision event in the limit $r,\lambda\rightarrow 0$. It is intuitive to expect that for two particles starting from $x_1$, $x_2$ colliding at $X$ at time $T$,
\begin{equation}\label{eqnes}
 \mathbb{P}(T\in dt, X\in dx)\sim p(t,x_1,x)p(t,x_2,x)k(r,\lambda)dtdx,
\end{equation}
where $p$ is the transition density of a standard Brownian motions and $k(r,\lambda)$ for small $r,\lambda$ is to be determined.

In \cite{bmk}, Norris has considered the case where the dynamics of the particles are exactly Brownian and showed that $k(r,0)=c_dar^{d-2}$ for some constant $c_d$. Therefore, in the case where $\lambda\ll r$, we would expect that $k(r,\lambda)\approx c_dar^{d-2}$. However, when $\lambda\gg r$, we will give two cases where we can show that the non-Brownian microscopic dynamics leads to different rates for coagulation.

We will first look at the case of diffusion at rate $a$ enhanced by a $\lambda$-periodic drift $b^{\lambda}(x)$. In \cite{convection}, Fannjiang and Papanicolaou showed that when $b^{\lambda}(x)=\frac{b(x/\lambda)}{\lambda}$ for some 1-periodic (i.e. $b(x+x')=b(x)$ for any integer point $x'$) divergence-free zero-mean $b$ then the underlying motion converges weakly to a Brownian motion with diffusivity $\bar{a}$, which in general does not equal to $a$. We will see in Corollary \ref{period} that when $\bar{a}$ and $a$ are both scalars 
\begin{displaymath}
\lim_{r\rightarrow 0}\lim_{\lambda\rightarrow 0}k(r,\lambda)r^{2-d}=c_da
\end{displaymath} 
while
\begin{displaymath}
\lim_{\lambda\rightarrow 0}\lim_{r\rightarrow 0}k(r,\lambda)r^{2-d}=c_d\bar{a}.
\end{displaymath}

Then we will look at the case where the motions of the particles are modelled by integrated Ornstein-Uhlenbeck processes, and make appropriate scaling so that their motions converge to Brownian motions. We will see in Theorem \ref{ou} that when $r\ll\lambda$, $k(r,\lambda)\sim f(\lambda)r^{d-1}$ for some function $f$. So, we will have that $k(r,\lambda)\ll k(r,0)$ in this case. Intuitively this is because, in the Ornstein-Uhlenbeck case, when two particles come close to each other, they are likely to get far away again with almost constant speed so that their trajectories are almost straight lines, while in the Brownian case, the particles are likely to move back and forth more before they go away from each other and this results more chance for them to collide.

Our second contribution is to the theory of coagulation-diffusion equations. This was motivated by the mass-dependent diffusivities and collision probabilities emerging in the first part of the paper, to which prior work on coagulation-diffusion equations did not apply. We show in Sections 5 and 6 an existence and uniqueness result which does apply for the diffusivities and collision probabilities associated with the Ornstein--Uhlenbeck case.

Consider a large cloud of colloidal particles of $N$ particles in which when two particles collide, they coagulate and continue the random motion as a larger particle. As the $N$ becomes large, the distribution of the particles is expected to converge to the solution of the coagulation-diffusion equations
\begin{equation}\label{eqncd}
\dot{\mu}(x,dy)=\frac{1}{2}a(y)\Delta_x\mu_t(x,dy)+K^+(\mu_t)(x,dy)-K^-(\mu_t)(x,dy),
\end{equation}
where 
\begin{displaymath}
K^+(\mu)(x,A)=\frac{1}{2}\int_0^\infty\int_0^\infty \mathbf{1}_{y+y'\in A}K(y,y')\mu(x,dy)\mu(x,dy'),
\end{displaymath}
\begin{displaymath}
K^-(\mu)(x,A)=\int_{y\in A}\int_0^\infty K(y,y')\mu(x,dy)\mu(x,dy').
\end{displaymath}
Here, $y$ represents the mass of the particles and we assume their macroscopic free motion are approximately Brownian motions with diffusivity $a(y)$. Further, $\mu_t$ is a kernel on $\mathbb{R}^d\times (0,\infty)$ with $d\geq 3$. In this context, for a measurable set $A\subseteq (0,\infty)$, $\mu_t(x,A)$ represents the density of particles of masses within the set $A$ at position $x$ at time $t$. So, $\frac{1}{2}a(y)\Delta_x\mu_t(x,dy)$ represents the rate of change of $\mu$ due to diffusions. Moreover, the coagulation kernel $K:(0,\infty)\times (0,\infty)\rightarrow (0,\infty)$ is a measurable function. Intuitively, we can think $K(y,y')$ as the rate at which a particle of mass $y$ and a particle of mass $y'$ coagulate and form a particle of mass $y+y'$ when the two particles are at same position. Thus, $K^+$ represents the rate at which new particles are created due to coagulations and $K^-$ represents the rate at which particles are lost due to coagulations. We denote $K=K^+-K^-$ to represent rate of change of the particles due to coagulations. The convergence of the $N$-particles system remains an open problem in general. See \cite{HR}, \cite{HR1} and \cite{cluster} for related work. We now explain the connection in a heuristic level. (\ref{eqnes}) can be generalised in the case when two particles are different. For two particles of masses $y_1$ and $y_2$ starting at $x_1$ and $x_2$ respectively, colliding at $X$ at time $T$, we have
\begin{displaymath}
 \mathbb{P}(T\in dt, X\in dx)\sim p_1(t,x_1,X)p_2(t,x_2,X)K(y_1,y_2)dtdx,
\end{displaymath}
where $p_1$ and $p_2$ are the transition densities of the particles. We now give an interpretation of this. For a small region $dx$, but still large relative to the sizes of the particles, we take time interval $dt$ sufficiently small so that when the particles have been in $dx$ during $dt$, they are almost certain to be in $dx$ during the entire $dt$. Then the probability that the particles collide in $dx$ during $dt$ is the probability they are both in $dx$ during $dt$ multiplied by $Kdt/dx$. Or, we can say that when the two particles are in $dx$ during $dt$, they have probability $Kdt/dx$ to collide. Now, for large $N$, suppose we can scale the particles' masses to be $f^N(y)$ for some function $f^N$ such that $K(y_1,y_2)=NK(f^N(y_1),f^N(y_2))$. We also approximate the number of particles of mass $f^N(y)$ in $dx$ at time $t$ to be $N\mu_t(x, y)dx$, which is still large. Then for a particle of mass $f^N(y_1)$ in $dx$ during $dt$, the total probability it collides there with a particle of mass $f^N(y_2)$ will be $N\mu_t(x,y_2)K(f^N(y_1),f^N(y_2))dt=\mu_t(x,y_2)K(y_1,y_2)dt$. Therefore, the expected total number of coagulations between particles of masses $y_1$ and $y_2$ there will be $N\mu_t(x,y_1)\mu_t(x,y_2)K(y_1,y_2)dxdt$. As $N\rightarrow\infty$, by law of large numbers, we can approximate the number of these coagulations to be $N\mu_t(x,y_1)\mu_t(x,y_2)K(y_1,y_2)dxdt+o(Ndxdt)$ and this means that the coagulations contribute to a loss of $\mu_t(x,y_1)\mu_t(x,y_2)K(y_1,y_2)dxdt+o(dxdt)$ to $\mu(x,y_1)dx$ and $\mu(x,y_2)dx$ and a gain of the same amount to $\mu(x,y_1+y_2)dx$ during $dt$. Integrating over $y_1$ and $y_2$ explains the form $K^+$ and $K^-$ in the coagulation-diffusion equations. Note that, although we can think $N$ as the number of particles, we do not really need to require $N$ to be integer in our analysis. Also, $K(y_1,y_2)$ doesn't need to represent exactly the coagulation rate between particles of sizes $y_1$ and $y_2$, we only need $K(f^N(y_1),f^N(y_2))$ to represent the coagulation rate between particles of sizes $f^N(y_1)$ and $f^N(y_2)$ in the limit as $N\rightarrow\infty$. However, this argument is only heuristic. We know for fixed $dx$, $dt$ and $y_1,y_2$ the number of coagulations between particles of masses $y_1$ and $y_2$ in $dxdt$ can be approximated by $N\mu_t(x,y_1)\mu_t(x,y_2)K(y_1,y_2)dxdt+o(Ndxdt)$, but the $o(Ndxdt)$ term depends on $y_1,y_2$ and $\mu$. In particular, if we fix a large $N$ and look at the distribution of sufficiently large particles, then we might have  a rather large error when using law of large numbers because there are not many large particles. Moreover, although there are not many large particles, we might not ignore their influence on the system of particles because they might coagulate fast and grow quickly. For this reason, it is difficult to show rigorously that the distribution of particles actually converges to the coagulation-diffusion equations in general.

Note that (\ref{eqncd}) only makes sense if $\mu_t(x,y)$ is twice differentiable in $x$. However, the equation can be reformulated to make sense without prior assumptions on $\mu_t$. We follow Norris \cite{BM}. Define $p^{t,x',x}(y)=(2\pi a(y)t)^{-d/2}e^{\frac{-|x'-x|^2}{2a(y)t}}$ and
\begin{displaymath}
P_t\mu(x,dy)=\int_{\mathbb{R}^d}\mu(x',dy)p^{t,x',x}(y)dx'.
\end{displaymath}
If we have a Brownian particle with diffusivity $a(y)$ starting at $x'$, then $p^{t,x',x}(y)$ is the probability density that the particle is at $x$ at time $t$. Norris then reformulated the Smoluchowski coagulation equation to be
\begin{equation}\label{eqn}
\mu_t+\int_0^t P_{t-s}K^-(\mu_s)ds=P_t\mu_0+\int_0^t P_{t-s}K^+(\mu_s)ds.
\end{equation}
This equation and some variants of it have been considered in several prior works. Many of them considered function solutions in the discrete case, i.e. $\mu_t(x,y)=\sum_{m=1}^\infty f^m_t(x)\delta_{m}(dy)$, see \cite{reg,disc,wro1,wro2,wro3}. We will restrict our review on the existence to works addressing the continuous case. In \cite{coafra}, Amann proved local existence and uniqueness in a general setting, assuming uniform bounds on diffusivities and coagulation rates and uniform positivity of the diffusivities. Later, in \cite{locglob}, Amann and Walker proved global existence for small initial data under similar hypotheses. In \cite{continuous}, Lauren{\c{c}}ot and Mischler showed the global existence when $a,\frac{1}{a}$ and $K$ are all bounded on compacts and the coagulation kernel satisfies the Galkin-Tupchiev monotonicity condition 
\begin{displaymath}
K(y_1,y_2)\leq K(y_1+y_2,y_1)
\end{displaymath} 
along with the growth bound 
\begin{displaymath}
\lim_{y'\rightarrow\infty}\sup_{y\leq R}\frac{K(y,y')}{y'}=0. 
\end{displaymath}
If we assume further that masses of all particles are uniformly positive, then Mischler and Rodriguez Richard showed in \cite{mono} that the monotonicity condition can be weakened by 
\begin{displaymath}
K(y_1,y_2)\leq K(y_1+y_2,y_1)+K(y_1+y_2,y_2)
\end{displaymath}
in the context of coagulation-diffusion in a bounded domain in $\mathbb{R}^3$. 

In \cite{Carr}, Ball and Carr noted that in the spatially homogeneous setting, the questions of uniqueness and mass conservation for coagulation equations are related to the existence of moment bounds for solutions. In \cite{moment} and \cite{fra}, Rezakhanlou and Hammond obtained suitable moment bounds for solutions under assumptions including that the diffusivity $a$ is positive, uniformly bounded and non-increasing, and that the coagulation kernel $K$ satisfies
\begin{displaymath}
\sup_{y,y'}\frac{K(y,y')}{yy'}<\infty
\end{displaymath}
and
\begin{displaymath}
\lim_{y+y'\rightarrow\infty}\frac{K(y,y')}{(y+y')(a(y)+a(y'))}\rightarrow 0.
\end{displaymath}
In \cite{rez}, Rezakhanlou has shown that the non-increasing condition on the diffusivities can be relaxed to some extent. In \cite{BM}, Norris assumed that $K(y,y')\leq w(y)w(y')$ for some sublinear function $w$ and gave a proof for the existence and uniqueness in the case requiring $a^{-\frac{d}{2}}w$ to be sublinear.

If we assume the microscopic free motions of the particles are Ornstein-Uhlenbeck processes in $\mathbb{R}^3$ satisfying the Einstein-Stokes relation. Then we can show that under certain scaling limit, an appropriate choice for $K$ would be
\begin{displaymath}
K(y_1,y_2)=(y_1^{\frac{1}{3}}+y_2^{\frac{1}{3}})^2\sqrt{\frac{1}{y_1}+\frac{1}{y_2}},
\end{displaymath}
and the diffusivity $a(y)=y^{-\frac{1}{3}}$. If we use the result from \cite{mechanical} instead of Einstein-Stokes relation, we will get the same coagulation kernel but $a(y)=y^{-\frac{2}{3}}$. If we fix $y'$, we see that $K(y)\sim y^{2/3}$ for large $y$. Therefore, in either of the cases, $a^{-\frac{d}{2}}w$ cannot be sublinear and $\frac{K(y,y')}{(y+y')(a(y)+a(y'))}$ does not converge to $0$. So, we can not directly apply the results in \cite{moment, BM, fra, rez} to obtain the existence and uniqueness of the solution. In this thesis, we will give criteria for the existence and uniqueness of solution to the Smoluchowski coagulation equations which work in these two cases.
Further, as an extension, we consider the corresponding Smoluchowski coagulation equations when the particles are subject to a position and mass dependent drift in addition to their basic diffusivityies. We will also give natural criteria for the existence and uniqueness of the solution to the Smoluchowski coagulation equations in this case.
\section{Main results}
\subsection{Collision estimates}
Consider two particles in $\mathbb{R}^d$ of radii $r_1N^{-1/(d-2)}$ and $r_2N^{-1/(d-2)}$ starting at $x_1$ and $x_2$. The reason for the scaling term $N^{-1/(d-2)}$ is that if we consider a system of $N$ particles, this scaling turns out to make the number of collisions happening per unit time to be of order $N$ and thus the rate at which a particle collides is of order $1$. To see this, we consider for simplicity the case where the two particles are independent Brownian. For $i=1,2$, let $X^i$ be the position of particle $i$, then the probability that the two particles will ever collide is $\frac{|x_1-x_2|^{2-d}}{|r_1+r_2|^{2-d}}$ by applying optional stopping theorem on the martingale $|X^1_t-X^2_t|^{2-d}$ stopped at collision time T. Moreover, it is reasonable to believe that the distributions of T conditionally on $T<\infty$ will be roughly the same for different large $N$.

Now, we let $X^i$ satisfy
\begin{displaymath}
 dX^i_t=\sqrt{a_i(X^i_t)}dB^i_t+b_i(X^i_t)dt,
\end{displaymath}
with $a_i$ being bounded H\"older continuous scalar functions, $b_i$ being bounded measurable functions and $B^i$ being independent standard Brownian motions. 
Let $p_i(s,x;t,y)=p_i(t-s,x,y)$ be the transition density of  particle $i$.  Now, set $T$ to be the first time when the two particles collide and $X(T)$ be the centre of mass of the two particles at time $T$. In \cite{bmk}, Norris has proved that if $b_i=0$ and $a_i$ are constants, then for any uniformly continuous bounded function $g$ supported on $[0,R)\times\mathbb{R}^d$ with $R>0$ we have
\begin{displaymath}
N\mathbb{E}[g(T,X(T))\mathbf{1}_{T<R}]\rightarrow K\int_0^R\int_{\mathbb{R}^d} p_1(0,x_1;s,z)p_2(0,x_2;s,z)g(s,z)dzds,
\end{displaymath}
as $N\rightarrow\infty$, where $K=c_d(a_1+a_2)(r_1+r_2)^{d-2}$ and 
\begin{displaymath}
 \frac{1}{c_d}=\int_0^\infty\frac{1}{(2\pi t)^{\frac{d}{2}}}e^{-\frac{1}{2t}}dt.
\end{displaymath}
This theorem essentially means that the probability that the two particles collide at $dz$ during time $ds$ is approximately $\frac{Kp_1(0,x_1;s,z)p_2(0,x_2;s,z)dzds}{N}$. In the Introduction, we have  explained that if we can scale the particle sizes such that $K(y_1,y_2)=NK(f^N(y_1),f^N(y_2))$, then we can hope that the evolution of the system of particles converges to the coagulation-diffusion equations. If we let $y_i$ be the mass of a particle of radius $r_i$ and $f^N(y_i)$ be the mass of a particle of radius $r_iN^{-1/(d-2)}$, then in this context, we have 
\begin{displaymath}
K(f^N(y_1),f^N(y_2))=\frac{K}{N}=\frac{K(y_1,y_2)}{N}
\end{displaymath}
which confirms our choice of scaling the radius with $N^{-1/(d-2)}$. We will generalize this result into the following theorem.
\begin{thm}\label{diff}
 For all $d\geq 3$ and $R\in[1,\infty)$ there is a constant $C$ depending only on $d$ and $R$ with the following property. Let $N\in(0,\infty)$, $x_i\in\mathbb{R}^d$ and $y_i,r_i\in[R^{-1},R]$, $i=1,2$, be given. For $i=1,2$, let $a_i:\mathbb{R}^d\rightarrow [R^{-1},R]$ be H\"older continuous functions and $b_i:\mathbb{R}^d\rightarrow \mathbb{R}^d$ be measurable with $|b_i(x)|\leq R$ for all $x\in\mathbb{R}^d$. Set $a(x)=a_1(x)+a_2(x)$, $r=r_1+r_2$ and $K(x)=c_da(x)r^{d-2}$ with
\begin{displaymath}
 \frac{1}{c_d}=\int_0^\infty\frac{1}{(2\pi t)^{\frac{d}{2}}}e^{-\frac{1}{2t}}dt.
\end{displaymath}
 For $i=1,2$, let $X^i$ be a diffusion in $\mathbb{R^d}$ satisfying
\begin{displaymath}
 dX^i_t=\sqrt{a_i(X^i_t)}dB^i_t+b_i(X^i_t)dt,
\end{displaymath}
\begin{displaymath}
 X^i_0=x_i,
\end{displaymath}
 with $B^1$, $B^2$ independent standard Brownian motions and $x_1\neq x_2$. Set $r_N=rN^{-1/(d-2)}$ and let
 \begin{displaymath}
 T=\inf\{t\geq 0: |X^1_t-X^2_t|\leq r_N\},\:X(T)=(y_1X^1_T+y_2X^2_T)/(y_1+y_2).
\end{displaymath}
For $i=1,2$, for $s,t\in\mathbb{R}$ and $x,z\in \mathbb{R}^d$, let $p_i(s,x;t,z)$ be the transition density of $X^i$. Let $1\geq\epsilon\geq 2r_N$ be given and let $g$ be a bounded measurable function on $[0,\infty)\times\mathbb{R}^d$, supported on $[0,R)\times\mathbb{R}^d$. Write $\|g\|$ for the uniform norm and set 
\begin{displaymath}
 \phi_g(\epsilon)=\sup_{|s-t|\leq\epsilon^2,|x-z|\leq\epsilon}|g(s,z)-g(t,x)|.
\end{displaymath}
Then
\begin{align*}
 &\left|N\mathbb{E}(g(T,X(T))\mathbf{1}_{T<R})-\int_0^R\int_{\mathbb{R}^d} K(z)p_1(0,x_1;s,z)p_2(0,x_2;s,z)g(s,z)dzds\right|\\
&\leq C\big[\epsilon^{2-d}\|g\|/N+\epsilon^2+\phi_g(\epsilon)\big](|x_1-x_2|)^{2-d}.
\end{align*}
In particular, when $g$ is uniformly continuous, by choosing $\epsilon=max(2rN^{-1/(d-2)},N^{-\frac{1}{2(d-2)}})$, say, we obtain
\begin{displaymath}
\left|(r_N)^{2-d}\mathbb{E}(g(T,X(T))\mathbf{1}_{T<R})-\int_0^R\int_{\mathbb{R}^d} c_da(z)p_1(0,x_1;s,z)p_2(0,x_2;s,z)g(s,z)dzds\right|\rightarrow 0.
\end{displaymath}
\end{thm}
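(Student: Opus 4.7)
The strategy is to reduce to Norris's constant-coefficient Brownian collision estimate from \cite{bmk} via a two-scale localisation at spatial scale $\epsilon$ and time scale $\epsilon^2$. Write $Y_t = X^1_t - X^2_t$ and let $T_\epsilon = \inf\{t \geq 0 : |Y_t| \leq \epsilon\}$ be the first time the particles come within $\epsilon$ of each other. The key decomposition is: at the outer scale, the joint distribution of $(T_\epsilon, X^1_{T_\epsilon}, X^2_{T_\epsilon})$ is controlled by the smooth transition densities $p_1, p_2$ together with the Brownian-type hitting theory for the difference process $Y$; at the inner scale, the further excursion from distance $\epsilon$ to the collision scale $r_N$ takes place in a small spatial and temporal region, so the H\"older-continuous diffusivities can be frozen at their values at the midpoint $z$, and \cite{bmk} applies directly.

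First I would dispose of the drift: since $b_i$ is bounded, Girsanov gives an equivalent driftless law with $L^p$-bounded Radon--Nikodym derivative, so the microscopic collision analysis can be done in the driftless setting while the drift is retained in the outer transition densities; the additional drift-driven displacement over an inner excursion time $\epsilon^2$ is itself $O(\epsilon^2)$, matching the second error term. Next, by the strong Markov property at $T_\epsilon$, decompose
\begin{equation*}
\mathbb{E}\bigl[g(T, X(T)) \mathbf{1}_{T < R}\bigr] = \mathbb{E}\bigl[\mathbf{1}_{T_\epsilon < R}\, h(T_\epsilon, X^1_{T_\epsilon}, X^2_{T_\epsilon})\bigr],
\end{equation*}
where $h(s, u, v)$ is the corresponding collision functional when $(X^1, X^2)$ is started at $(u, v)$ at time $s$ with $|u - v| = \epsilon$. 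The inner factor $h$, by applying \cite{bmk} with the diffusivities frozen at $a_1(z), a_2(z)$ at the midpoint $z = (u+v)/2$, is approximately the constant-coefficient collision functional at $z$, which contributes $K(z) r_N^{d-2} g(s, z)$ to leading order up to $\phi_g(\epsilon)$. Combining with standard Aronson-type heat-kernel bounds for $p_1, p_2$ gives the main integral with the correct coefficient $K(z) = c_d a(z) r^{d-2}$.

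The three error terms then arise naturally: $\epsilon^{2-d} \|g\|/N$ comes from the relative error in the inner collision estimate after the rescaling $r_N^{d-2} = r^{d-2}/N$; $\epsilon^2$ collects the time-discretisation error at parabolic scale, the drift contribution, and the coefficient-freezing error absorbed at parabolic scale; and $\phi_g(\epsilon)$ is the cost of replacing $g(T, X(T))$ by $g(s, z)$ on each inner excursion. The overall prefactor $|x_1 - x_2|^{2-d}$ is the Brownian capacity bound on the total collision probability of $Y$, which multiplies any relative error to give an absolute error bound.

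The hardest step will be the uniform quantitative matching of the inner excursion to \cite{bmk}: H\"older continuity of $a_i$ yields only an $O(\epsilon^\alpha)$ local error in the diffusivity, which must be combined carefully with the parabolic scale $\epsilon^2$ to keep the aggregated freezing error under $\epsilon^2 |x_1-x_2|^{2-d}$. A second subtlety is that with positive probability the particles enter the $\epsilon$-ball, drift out again, and re-enter several times before finally colliding, so one needs a renewal argument using the strong Markov property at successive entries to and exits from the $\epsilon$-ball to avoid double counting; the geometric series in these excursion probabilities should converge cleanly to the asymptotic $K(z) r_N^{d-2}$ predicted by the constant-coefficient case.
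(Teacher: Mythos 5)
Your proposal takes a genuinely different route from the paper. The paper's proof does not localise at a first-hitting time $T_\epsilon$ and never freezes coefficients. Instead it fixes $(s,z)$, forms the martingale $M_t=\mathbf{1}_{t<s}\,p_1(t,X^1_{t\wedge T};s,z)p_2(t,X^2_{t\wedge T};s,z)$, and applies optional stopping at $T$ to obtain the exact identity
\begin{displaymath}
\int_0^R\!\!\int_{\mathbb{R}^d}p_1(0,x_1;s,z)p_2(0,x_2;s,z)g(s,z)K(z)\,dz\,ds
=\mathbb{E}\!\left[\int_T^R\!\!\int_{\mathbb{R}^d}p_1(T,X^1_T;s,z)p_2(T,X^2_T;s,z)g(s,z)K(z)\,dz\,ds\right],
\end{displaymath}
after which the inner integral is evaluated by the Dubins--Schwarz time change $A(t)=\int_0^t[a_1(X^1)+a_2(X^2)]\,ds$, which maps the difference process onto a Brownian motion plus bounded adapted drift. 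This identity automatically accounts for repeated near-approaches of the two particles, so the paper needs no renewal argument; and because $dA=a(\cdot)\,dt$, the factor $a(z)$ in $K(z)$ emerges exactly, with no coefficient freezing at all. The drift is handled not by Girsanov on $[0,R]$ but through Theorem \ref{drift}, which gives two-sided transition-density bounds for a Brownian motion with bounded adapted drift, uniformly in the drift.

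There is a genuine gap in your plan. Freezing the H\"older-continuous diffusivities at the midpoint over a ball of radius $\epsilon$ costs an error of order $\epsilon^{\alpha}$, where $\alpha$ is the H\"older exponent of $a_i$; you acknowledge this as the hard step but do not resolve it. The theorem, however, asserts a bound with constant depending only on $d$ and $R$ and an error term of order $\epsilon^2$ (indeed $\epsilon$ in the paper's own computation), independent of the H\"older exponent and norm of $a_i$. Since H\"older exponents must satisfy $\alpha\le 1$ for non-constant $a_i$, a freezing error of $\epsilon^{\alpha}$ cannot be beaten down to $\epsilon^2$ and, worse, it injects a dependence on $\alpha$ that the statement excludes. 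The paper's time-change argument sidesteps this entirely: the variable diffusivity is absorbed exactly into the clock, and H\"older continuity is used only qualitatively (to ensure the transition densities exist and are locally H\"older, via Theorem \ref{aronson} and Corollary \ref{vol}), not quantitatively. A secondary weakness is that your strong-Markov decomposition at $T_\epsilon$ pushes the entire remaining collision problem into the kernel $h(s,u,v)$; the renewal/geometric-series idea is then really an analysis of $h$ from scratch, and quantifying it with the correct constant $c_d$ and the $|x_1-x_2|^{2-d}$ prefactor would essentially reproduce the occupation-time computation that the optional-stopping identity gives in one step.
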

So, now the probability that the two particles collide at $dz$ during time $ds$ is approximately $K(z)p_1(0,x_1;s,z)p_2(0,x_2;s,z)g(s,z)dzds$. A key difficulty in proving this theorem  with respect to the works of Norris is that we can not express $p$ explicitly. We will need to make estimations and bounds on $p$ and avoid the need of its explicit form to solve this difficulty. 
 
 As an application, we will investigate how Brownian particles coagulate under a periodic drift. We let the motion of the particles satisfy
\begin{displaymath}
 dX^\lambda_i(t)=\sqrt{a_i}dB_i(t)+b^{\lambda}_i(X^\lambda_i(t))dt,
\end{displaymath}
where $b^{\lambda}_i(x)=\frac{b_i(x/\lambda)}{\lambda}$ for some periodic divergence-free zero-mean $b$. In \cite{convection}, \cite{homogenization} and \cite{Norris}, they have shown that the underlying motion converges weakly to a Brownian motion with diffusivity $\bar{a}_i$ as $\lambda\rightarrow 0$ for some $\bar{a}_i$. We will assume that $a_i$ and $b_i$ are chosen such that both $a_i$ and $\bar{a}_i$ are scalars. A concrete example would be when $d=4$, for $j=1,2,3,4$, denote $b_i^j(x)$ the $j$th component of $b_i(x)$ in Cartesian coordinates and let $b_i^1(x)=\sin(x_1)\cos(x_2)$, $b_i^2(x)=-\sin(x_2)\cos(x_1)$, $b_i^3(x)=\sin(x_3)\cos(x_4)$ and $b_i^4(x)=-\sin(x_4)\cos(x_3)$. It has been shown that when $a_i$ is small, $\bar{a}_i$ will be approximately $c\sqrt{a_i}$ for some constant $c$.
\begin{cor}\label{period}
We will use same notation as in Theorem \ref{diff}. Let $\bar{a}=\bar{a}_1+\bar{a}_2$, then for any bounded continuous measurable function $g$ on $[0,\infty)\times\mathbb{R}^d$, supported on $[0,R)\times\mathbb{R}^d$, we have
\begin{align*}
 &\lim_{N\rightarrow\infty}\lim_{\lambda\rightarrow 0}\left|N\mathbb{E}(g(T,X^\lambda_T)\mathbf{1}_{T<R})-\int_0^R\int_{\mathbb{R}^d} \bar{K}p_1(0,x_1;s,z)p_2(0,x_2;s,z)g(s,z)dzds\right|\rightarrow 0
\end{align*}
and
\begin{align*}
 &\lim_{\lambda\rightarrow 0}\lim_{N\rightarrow\infty}\left|N\mathbb{E}(g(T,X^\lambda_T)\mathbf{1}_{T<R})-\int_0^R\int_{\mathbb{R}^d}Kp_1(0,x_1;s,z)p_2(0,x_2;s,z)g(s,z)dzds\right|\rightarrow 0,
\end{align*}
where $K=c_dar^{d-2}$ and $\bar{K}=c_d\bar{a}r^{d-2}$ and $p_1$ and $p_2$ are the transition densities of Brownian motions with diffusivities $\bar{a}_1$ and $\bar{a}_2$ respectively.
\end{cor}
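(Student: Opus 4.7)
The idea is to combine Theorem~\ref{diff} with the periodic homogenisation results cited in the introduction. The two iterated limits are attacked in opposite orders: for $\lim_{N\to\infty}\lim_{\lambda\to 0}$ I first pass to the homogenised Brownian motion and then apply Theorem~\ref{diff} to it, while for $\lim_{\lambda\to 0}\lim_{N\to\infty}$ I first apply Theorem~\ref{diff} for each fixed $\lambda>0$ and then homogenise the transition densities that appear in the limit.

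For the first limit, fix $N$ and hence the collision radius $r_N$. By Fannjiang and Papanicolaou's homogenisation theorem, for each $i=1,2$ the process $X^\lambda_i$ converges weakly in $C([0,R],\mathbb{R}^d)$ as $\lambda\to 0$ to a Brownian motion $\widetilde X^i$ of diffusivity $\bar a_i$ started at $x_i$; by independence $(X^\lambda_1,X^\lambda_2)\Rightarrow(\widetilde X^1,\widetilde X^2)$. The limiting difference $\widetilde X^1-\widetilde X^2$ is a non-degenerate Brownian motion and hence almost surely strictly crosses the sphere $\{|y|=r_N\}$ at its first hitting time, so the map from paths to the pair $(T,X(T))$ is continuous at almost every realisation of the limit. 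Combined with $\mathbb{P}(\widetilde T=R)=0$, the continuous mapping theorem yields
\begin{displaymath}
\mathbb{E}\bigl(g(T^\lambda,X^\lambda(T^\lambda))\mathbf{1}_{T^\lambda<R}\bigr)\To \mathbb{E}\bigl(g(\widetilde T,\widetilde X(\widetilde T))\mathbf{1}_{\widetilde T<R}\bigr)
\end{displaymath}
as $\lambda\to 0$. Applying Theorem~\ref{diff} to $(\widetilde X^1,\widetilde X^2)$ with $a_i=\bar a_i$ constant and $b_i=0$, and sending $N\to\infty$, produces the first claim with $\bar K=c_d\bar a r^{d-2}$.

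For the second limit, fix $\lambda>0$. The drift $b^\lambda_i(x)=b_i(x/\lambda)/\lambda$ has $L^\infty$-norm $\|b_i\|_\infty/\lambda<\infty$, so Theorem~\ref{diff} applies to $X^\lambda_i$ (with a constant depending on $\lambda$); letting $N\to\infty$ yields
\begin{displaymath}
N\mathbb{E}\bigl(g(T^\lambda,X^\lambda(T^\lambda))\mathbf{1}_{T^\lambda<R}\bigr)\To \int_0^R\!\!\int_{\mathbb{R}^d} K\,p^\lambda_1(0,x_1;s,z)p^\lambda_2(0,x_2;s,z)g(s,z)\,dz\,ds,
\end{displaymath}
where $K=c_d a r^{d-2}$ does not depend on $\lambda$ (since the $a_i$ are constants) and $p^\lambda_i$ is the transition density of $X^\lambda_i$. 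Homogenisation gives pointwise convergence $p^\lambda_i(0,x_i;s,z)\to p_i(0,x_i;s,z)$ for $s>0$ as $\lambda\to 0$, together with Aronson-type Gaussian upper bounds on $p^\lambda_i$ that are uniform in $\lambda$; dominated convergence then delivers the stated limit with the heat kernels of the effective Brownian motions.

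The delicate steps are the continuity of the non-smooth functional $g(T,X(T))\mathbf{1}_{T<R}$ at Brownian limit paths in the first argument, which I handle using the fact that Brownian motion does not creep across spherical boundaries, and the uniform-in-$\lambda$ Gaussian control of $p^\lambda_i$ in the second. The latter is the main technical point, since $b^\lambda_i$ itself blows up as $\lambda\to 0$, so Aronson's bounds must be deduced from the divergence-form rewriting of the generator $\tfrac{a_i}{2}\Delta+b^\lambda_i\cdot\nabla$, which is available precisely because $b^\lambda_i$ is divergence-free.
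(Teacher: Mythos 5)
Your proposal is correct and follows essentially the same route as the paper: for $\lim_{\lambda\to 0}\lim_{N\to\infty}$, apply Theorem~\ref{diff} at fixed $\lambda$ (constant $a_i$ make $K$ independent of $\lambda$) and then pass $p^\lambda_i\to p_i$ via pointwise homogenisation convergence together with a uniform-in-$\lambda$ Gaussian upper bound (the paper invokes \cite{Norris} for the latter, exploiting precisely the divergence-free structure you identify) and dominated convergence; for $\lim_{N\to\infty}\lim_{\lambda\to 0}$, use weak convergence of $(X^\lambda_1,X^\lambda_2)$ to the pair of effective Brownian motions, establish a.s.\ continuity of the functional $g(T,X(T))\mathbf{1}_{T<R}$ at the limit paths (your non-creeping and $\mathbb{P}(\widetilde T=R)=0$ observations are exactly the paper's exceptional events), and then apply Theorem~\ref{diff} to the limiting Brownian pair. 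No gaps.
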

The intuition behind this corollary is that when the particles' sizes are small but fixed and if we let $\lambda\rightarrow 0$, then we know that the motions of the two particles will converge to Brownian motions with diffusivities $\bar{a}_1$ and $\bar{a}_2$ respectively. Thus, we should expect that the distribution of the collision time and position of the two particles also converges to that of two Brownian particles with diffusivities $\bar{a}_1$ and $\bar{a}_2$ and thus the coagulation kernel will be $\bar{K}$. On the other hand, when $\lambda$ is fixed, and let $N\rightarrow\infty$, Theorem (\ref{diff}) says that the coagulation kernel depends only on the local diffusivities and equals to $K$.

Next, we will show analogous results for Ornstein-Uhlenbeck particles.
\begin{thm}\label{ou}
  For $d\geq 3$ and $i=1,2$, let $x_i\in\mathbb{R}^d$ and $y_i,\tau_i,b_i>0$ be given. Assume $x_1\neq x_2$. Further, for natural number $N$, let $V^N_i, X^N_i$ be Ornstein-Uhlenbeck velocity-position processes satisfying
\begin{align*}
 dV^N_i(t)&=Nb_idB^i_t-N\tau_iV^N_idt,\\
 dX^N_i(t)&=V^N_i(t)dt,\\
 V^N_i(0)&=0,\\
 X^N_i(0)&=x_i,
\end{align*}
 with $B^1$, $B^2$ independent standard Brownian motions. Let $r_N$ denote the sum of the radii of the two particles. 
Set
\begin{displaymath}
 T=\inf\{t\geq 0: |X^N_1(t)-X^N_2(t)|\leq r_N\},\:X(T)=(y_1X^N_1(T)+y_2X^N_1(T))/(y_1+y_2).
\end{displaymath}
Suppose $r_N<N^{-\alpha}$ for some $\alpha>\frac{1}{2}$. Let $g$ be a uniformly continuous and bounded function on $[0,\infty)\times\mathbb{R}^d$, supported on $[t_0,t_1]\times\mathbb{R}^d$ with $0<t_0<t_1$.
Then as $N\rightarrow\infty$
\begin{displaymath}
\left|N^{-\frac{1}{2}}(r_N)^{1-d}\mathbb{E}[g(T,X(T))]-c_d\sqrt{\frac{b_1^2}{\tau_1}+\frac{b_2^2}{\tau_2}}\int_{t_0}^{t_1}\int_{\mathbb{R}^d}q_1(0,x_1;t,z)q_2(0,x_2;t,z)g(t,z)dtdz\right|\rightarrow 0,
\end{displaymath}
where $q_i$ is the transition density for the $d$-dimensional Brownian motion with diffusivity $a_i=(\frac{b_i}{\tau_i})^2$ and $c_d$ is $\frac{1}{\sqrt{2}}$ times the product of the volume of a unit ball in $\mathbb{R}^{d-1}$ and the expected norm of a standard normal vector in $\mathbb{R}^d$. More explicitly, we have
\begin{displaymath}
c_d=\frac{\pi^{\frac{d-1}{2}}}{\Gamma(\frac{d}{2})},
\end{displaymath} 
where $\Gamma$ denotes the gamma function.
\end{thm}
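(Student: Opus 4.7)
The plan is to combine the macroscopic diffusive convergence of the position pair $(X^N_1,X^N_2)$ with a microscopic kinetic computation of the collision rate, assembled via a Palm-type estimate for the density of the first entry of $Y^N := X^N_1-X^N_2$ into the ball of radius $r_N$. Under the given scaling, each $V^N_i$ is an Ornstein--Uhlenbeck process with relaxation rate $N\tau_i$ and stationary law $\mathcal{N}(0,(Nb_i^2/(2\tau_i))I_d)$, and the integrated position $X^N_i$ converges in law to a Brownian motion of diffusivity $a_i=(b_i/\tau_i)^2$ started at $x_i$. Consequently, for $t>0$ the joint density of $(X^N_1(t),X^N_2(t))$ converges uniformly on compacts to $q_1(0,x_1;t,\cdot)q_2(0,x_2;t,\cdot)$, and the hypotheses $t_0>0$ and $x_1\ne x_2$ keep us in a regime where this convergence is clean and the product is bounded.

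Near each close approach I would apply a ballistic approximation. Because $\alpha>1/2$, the transit time $r_N/|V^N|\sim N^{-\alpha-1/2}$ across the $r_N$-ball is much smaller than the velocity relaxation time $N^{-1}$, so during each encounter the relative velocity $V^N:=V^N_1-V^N_2$ is essentially frozen at a stationary Gaussian draw with covariance $\sigma_N^2 I_d$, where $\sigma_N^2=(N/2)(b_1^2/\tau_1+b_2^2/\tau_2)$, and the relative position traces out an approximately straight line. Classical kinetic theory (or a direct integration of the flux $\int_{|y|=r_N}\int_{v\cdot\hat y<0}|v\cdot\hat y|f_{V^N}(v)\,dv\,dS(y)$) gives the rate of inward sphere crossings per unit local density of relative position as $\operatorname{vol}(B^{d-1})r_N^{d-1}\mathbb{E}|V^N|$. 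Using $\mathbb{E}|V^N|=\sigma_N\mathbb{E}|Z|$ for $Z\sim\mathcal{N}(0,I_d)$ together with $\operatorname{vol}(B^{d-1})\mathbb{E}|Z|=\sqrt{2}\,c_d$, this equals $c_d\sqrt{N(b_1^2/\tau_1+b_2^2/\tau_2)}\,r_N^{d-1}$, matching the claim after multiplication by $q_1q_2$, integration against $g$, and division by $\sqrt N\,r_N^{d-1}$.

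To glue the two pictures I would work on a mesoscopic time grid of spacing $\delta_N=N^{-\beta}$ with $1/2<\beta<\min(2\alpha-1,1)$, on which the velocities decorrelate to stationarity while positions move only $o(1)$. Conditioning on the approximately stationary velocity in each bin, the ballistic approximation yields an explicit entry probability of the form $q_1q_2\operatorname{vol}(B^{d-1})r_N^{d-1}\mathbb{E}|V^N|\delta_N$ to leading order, and summing over bins and integrating against $g$ produces the claimed integral. Two ingredients remain: (i) a mixing estimate ensuring that the joint velocity law at the first entry is close to its Palm (inward-biased) version of the stationary product, which follows from the fast relaxation $N\tau_i\to\infty$; and (ii) a bound showing that re-entries into the $r_N$-ball contribute negligibly, so that the expected total number of entries approximates the first-entry probability up to lower order.

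The main obstacle will be (ii). A particle leaving the $r_N$-ball at speed of order $\sqrt N$ travels a distance of order $1/\sqrt N\gg r_N$ in the relaxation time $N^{-1}$ before the Ornstein--Uhlenbeck restoring term re-randomizes its velocity; thereafter the relative motion is approximately Brownian of diffusivity $a_1+a_2$, and in $d\ge 3$ the hitting probability of the $r_N$-ball from distance $1/\sqrt N$ over time $O(1)$ is $O((\sqrt N\,r_N)^{d-2})=O(N^{(d-2)(1/2-\alpha)})=o(1)$. Making this quantitative requires careful hitting-probability estimates for the coupled position-velocity diffusion on the intermediate scales between $r_N$ and $1/\sqrt N$, and this is where the bulk of the technical work will lie. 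A subsidiary issue is the initial velocity transient, since $V^N_i(0)=0$ is not stationary, but the fast relaxation rate $N\tau_i\to\infty$ together with the assumption $t_0>0$ renders this negligible uniformly in $N$.
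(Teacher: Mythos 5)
Your proposal follows the same overall strategy as the paper's proof: discretize $[t_0,t_1]$ into a mesoscopic time grid, compute the entry probability into the $r_N$-ball in each bin via a ballistic flux calculation against the stationary relative-velocity law, and then show that re-entries contribute negligibly. The kinetic rate $\operatorname{vol}(B^{d-1})r_N^{d-1}\mathbb{E}|V^N|$ per unit local density is exactly what the paper obtains from the cylinder-volume identity $\operatorname{Vol}(D^N(v))=|v|\,r_N^{d-1}\operatorname{Vol}(S_{d-1})h_N$ followed by Gaussian integration. The structural difference is in the mesh. The paper uses $h_N\sim r_N N^{-\beta}$ with $\beta$ slightly below $\tfrac12$, which (taking $\epsilon$ small) gives $h_N\ll N^{-1}$: the bins are \emph{shorter} than the velocity relaxation time, so the relative velocity is genuinely frozen within each bin, and the collision event $B^N_i$ is a deterministic statement about the position-velocity pair at the bin start. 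You take $\delta_N=N^{-\beta}$ with $\beta<1$, so the bins are \emph{longer} than the relaxation time, velocities mix inside each bin, and your entry probability must be read as an integral of the instantaneous flux over the bin rather than a single conditioning on the bin-start velocity as your phrasing suggests; both give the same leading order, but the sub-relaxation grid makes the frozen-velocity approximation exact and simplifies the error bookkeeping. There is also a concrete bug: the constraint $\tfrac12<\beta<\min(2\alpha-1,1)$ is vacuous for $\tfrac12<\alpha\le\tfrac34$, so your grid does not exist throughout the regime the theorem covers, whereas the paper's $r_N$-dependent mesh works for all $\alpha>\tfrac12$. Finally, on the re-entry control that you rightly identify as the main obstacle: your heuristic (ballistic escape to distance $N^{-1/2}$ in one relaxation time, then Brownian escape in $d\geq3$) handles only separations longer than $N^{-1}$. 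The paper also treats pairs of bins at temporal separation $s$ with $N^\lambda\le s\le N^{-1}$, where the relative motion is still ballistic with $\operatorname{Var}(X^N)\sim N^2s^3$, and consecutive bins, where outgoing velocity persistence is used directly; these intermediate regimes require separate Gaussian tail estimates and are indeed where most of the paper's work is concentrated, so your acknowledgement that "the bulk of the technical work will lie" there is accurate but understates how much is missing.
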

\begin{thm}\label{brown}
Under the same setting as Theorem \ref{ou}, but suppose now that $r_N>N^{-\alpha}$ for some $\alpha<\frac{1}{2}$ and $r_N\rightarrow\ 0$ as $N\rightarrow\infty$. Let $g$ be a uniformly continuous and bounded function on $[0,\infty)\times\mathbb{R}^d$, supported on $[t_0,t_1]\times\mathbb{R}^d$ with $0<t_0<t_1$.
Then as $N\rightarrow\infty$
\begin{displaymath}
\left|(r_N)^{2-d}\mathbb{E}[g(T,X(T))]-c_d[(\frac{b_1}{\tau_1})^2+(\frac{b_2}{\tau_2})^2]\int_{t_0}^{t_1}\int_{\mathbb{R}^d}q_1(0,x_1;t,z)q_2(0,x_2;t,z)g(t,z)dtdz\right|\rightarrow 0,
\end{displaymath}
where $q_i$ is the transition density for the $d$-dimensional Brownian motion with diffusivity $a_i=(\frac{b_i}{\tau_i})^2$ and
\begin{displaymath}
 \frac{1}{c_d}=\int_0^\infty\frac{1}{(2\pi t)^{\frac{d}{2}}}e^{-\frac{1}{2t}}dt.
\end{displaymath}
\end{thm}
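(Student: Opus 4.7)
The strategy is to reduce Theorem \ref{brown} to Theorem \ref{diff} by coupling the integrated Ornstein--Uhlenbeck particle to a Brownian motion of matching macroscopic diffusivity, then sandwiching the hitting time. The hypothesis $r_N>N^{-\alpha}$ with $\alpha<1/2$ forces $r_N\sqrt{N}\to\infty$, so the particle radius dominates the ballistic scale $N^{-1/2}$ on which the velocity decorrelates; during any collision attempt the velocity undergoes many decorrelation cycles, and at the scale $r_N$ the microscopic motion is already well approximated by a Brownian motion of diffusivity $a_i=(b_i/\tau_i)^2$.

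First I would write the integrated OU process in closed form. Integrating the mild solution of the velocity SDE and applying Fubini gives
\begin{displaymath}
X^N_i(t)=x_i+\frac{b_i}{\tau_i}B^i_t-\eta^N_i(t),\qquad \eta^N_i(t)=\frac{b_i}{\tau_i}\int_0^t e^{-N\tau_i(t-s)}\,dB^i_s.
\end{displaymath}
So $W_i(t):=x_i+(b_i/\tau_i)B^i_t$ is a Brownian motion of diffusivity $a_i$, and the residual $\eta^N_i$ is a centred Gaussian process with variance bounded by $b_i^2/(2N\tau_i^3)$ and correlation time $(N\tau_i)^{-1}$. A Borell--TIS or chaining argument over a grid of mesh $(N\tau_i)^{-1}$ shows $\sup_{t\leq t_1}|\eta^N_i(t)|$ has Gaussian tails with scale $N^{-1/2}\sqrt{\log N}$. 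Setting $\epsilon_N:=r_N/\log N$, which satisfies $N^{-1/2}\sqrt{\log N}\ll\epsilon_N\ll r_N$ because $\alpha<1/2$, the event $E_N:=\{\sup_{t\leq t_1}|\eta^N_1(t)-\eta^N_2(t)|\leq\epsilon_N\}$ has probability $1-O(\exp(-cN^{1-2\alpha}/(\log N)^3))$.

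On $E_N$ the triangle inequality yields
\begin{displaymath}
T^W_{r_N+\epsilon_N}\leq T\leq T^W_{r_N-\epsilon_N},\qquad T^W_\rho:=\inf\{t:|W_1(t)-W_2(t)|\leq\rho\},
\end{displaymath}
together with $|X(T)-X^W(T)|\leq\epsilon_N$, where $X^W$ is the centre of mass of $(W_1,W_2)$. Since $(W_1,W_2)$ is a pair of driftless diffusions with constant diffusivities, I can invoke Theorem \ref{diff} with auxiliary parameter $\widetilde N=\rho^{2-d}$ and $r=1$, so that its scaled radius equals $\rho$; the condition $\rho\to 0$ gives $\widetilde N\to\infty$, and the theorem yields
\begin{displaymath}
\rho^{2-d}\,\mathbb{E}[g(T^W_\rho,X^W(T^W_\rho))]\longrightarrow c_d[(b_1/\tau_1)^2+(b_2/\tau_2)^2]\int_{t_0}^{t_1}\int q_1q_2g\,dz\,dt
\end{displaymath}
for each $\rho\in\{r_N-\epsilon_N,\,r_N,\,r_N+\epsilon_N\}$. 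Since $(r_N\pm\epsilon_N)^{2-d}=r_N^{2-d}(1+o(1))$, the two one-sided limits coincide with the RHS of Theorem \ref{brown}.

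The main obstacle is converting the deterministic sandwich of hitting times into a comparison of expectations, because $g$ is evaluated at the different random arguments $T$, $T^W_{r_N-\epsilon_N}$, and $T^W_{r_N+\epsilon_N}$. I would split on the gap $\Delta:=T^W_{r_N-\epsilon_N}-T^W_{r_N+\epsilon_N}$: when $\Delta\leq\delta$, Levy's modulus of continuity for $W$ combined with uniform continuity of $g$ gives a pointwise $o(1)$ error as $\delta\to 0$; when $\Delta>\delta$, the Brownian difference process $W_1-W_2$ must fail to contract across the annulus $[r_N-\epsilon_N,\,r_N+\epsilon_N]$ quickly, and by the classical transient-Bessel formula $\mathbb{P}_r(\tau_{r'}<\infty)=(r'/r)^{d-2}$ for $d\geq 3$ together with standard excursion-time bounds, this event has probability $o(r_N^{d-2})$ for $\delta=\epsilon_N$. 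The off-coupling contribution $\|g\|_\infty\,\mathbb{P}(E_N^c)\,r_N^{2-d}$ is negligible because the Gaussian tail from the coupling step decays faster than any polynomial in $N$, while $r_N^{2-d}\leq N^{\alpha(d-2)}$ grows only polynomially.
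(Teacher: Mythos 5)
Your strategy starts from the same key observation as the paper: the integrated OU position decomposes exactly as $X^N_i(t)=W_i(t)-\eta^N_i(t)$ with $W_i(t)=x_i+(b_i/\tau_i)B^i_t$ an exact Brownian motion and $\eta^N_i(t)=V^N_i(t)/(N\tau_i)$ a small Gaussian residual; the paper writes the same thing as $W^N_i=X^N_i+V^N_i/(N\tau_i)$. From there, though, you and the paper diverge. The paper does \emph{not} compare $T$ to the Brownian hitting times $T^W_\rho$ at all. Instead it re-runs the martingale / optional-stopping step from the proof of Theorem~\ref{diff} directly on the Brownian motions $W^N_i$, stopped at the (truncated) OU collision time $T'$, and uses only that $|W^N_1(T')-W^N_2(T')|$ lies within a factor $1+O(N^{m-1}/r_N)=1+o(1)$ of $r_N$ on the good event $\{\sup_{t\le t_1}|V^N_i(t)|\le N^m\}$. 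That sidesteps entirely the question of how close $T$ is to a hitting time of $W$. Your route is more modular --- you treat Theorem~\ref{diff} as a black box applied to $W$ at radii $r_N\pm\epsilon_N$, via the reparametrisation $\widetilde N=\rho^{2-d}$, $r=1$ --- but the price is exactly the step you identify as ``the main obstacle'': converting the deterministic sandwich $T^W_{r_N+\epsilon_N}\le T\le T^W_{r_N-\epsilon_N}$ into an estimate for $\mathbb{E}[g(T,X(T))]$. In the paper this obstacle simply never arises.

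That conversion step is where your sketch has a real gap. You set $\delta=\epsilon_N=r_N/\log N$ and claim $\mathbb{P}(\Delta>\delta\mid T^W_{r_N+\epsilon_N}<t_1)=o(1)$ ``by the classical transient-Bessel formula together with standard excursion-time bounds.'' Two issues. First, the Bessel formula $(r'/r)^{d-2}$ only controls the probability of \emph{never} crossing from $r_N+\epsilon_N$ to $r_N-\epsilon_N$, giving an $O(\epsilon_N/r_N)=O(1/\log N)$ contribution; the event $\{\delta<\Delta<\infty\}$ needs a separate bound. The natural bound on that event comes from noting that in time $\delta$ the radial process either escapes the annulus $[r_N-\epsilon_N,\,Cr_N]$ (which then makes the return probability small) or lingers there for time $\delta$, and the escape time scale is $r_N^2$, not $\epsilon_N^2$. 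So you need $\delta\gg r_N^2$. Your choice $\delta=\epsilon_N=r_N/\log N$ satisfies $\delta\gg r_N^2$ precisely when $r_N\log N\to 0$ --- but the hypothesis of the theorem (``$r_N>N^{-\alpha}$ for some $\alpha<1/2$ and $r_N\to 0$'') allows $r_N$ to decay arbitrarily slowly, e.g.\ $r_N\sim 1/\log\log N$, for which $r_N\log N\to\infty$ and the inequality fails. A choice such as $\delta=r_N$ (any fixed scale with $r_N^2\ll\delta\ll 1$) repairs this, and the modulus-of-continuity step then needs a quantitative Gaussian tail bound rather than the a.s.\ Levy modulus, which is routine but should be said. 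Second, and more minor, the uniform bound on $\sup_{t\le t_1}|\eta^N_i(t)|$ needs care over the sub-grid increments (a chaining estimate rather than a pure union bound over the mesh), though the order of magnitude you state is correct. With these repairs the argument closes, but as written the $\Delta>\delta$ bound is the weak link, and it is instructive that the paper's formulation avoids having to make it at all.
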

We know that the underlying motion of $X^N_i$ converges weakly to Brownian motion with diffusivity $a_i$ and $N$ here represents how close the motions are from Brownian motions. So, as expected, we see that when $r_N$ converges to zero relatively slowly compared to the convergence of the particles' free motions to Brownian motions, the coagulation kernel is the same as if the particles' motions are Brownian with diffusivities $a_i$. On the other hand, if $r_N$ converges to zero relatively fast compared to the convergence of the particles' free motions to Brownian motions, then the coagulation kernel is very different. In particular, the probability density that two particles collide at $dz$ during time $dt$ in this case is proportional to $N^{1/2}r_N^{d-1}$ while in the Brownian case it is proportional to $r_N^{d-2}$. We can think $N^{1/2}$ as the scale of average speed of the particles and thus when $r_N$ converges to zero relatively fast, the probability the two particles collide will depend both on their sizes and their average speed. Also note that, because we assumed $r_N< rN^{-\alpha}$ for some $\alpha>\frac{1}{2}$, we know that for large $N$ and small $r_N$, the probability density that two particles collide will be smaller than the density in the Brownian case. This confirms the intuition we have discussed about in the Introduction.
\subsection{Existence and uniqueness for coagulation-diffusion equations}\label{s2.2}
As we can see, the form of $K$ is different under different microscopic dynamics of the particles and this will also change the properties of Smoluchowski coagulation equations. Now, we assume $d=3$ and the particles have same density, i.e. their mass $y\sim r^{3}$. Then in the Brownian case, Einstein-Stokes relation suggests that $a(y)\sim \frac{1}{y^{1/3}}$. So, we have $K(y_1+y_2)=c_d(a_1+a_2)(r_1+r_2)^{d-2}\sim (y_1^{1/3}+y_2^{1/3})(\frac{1}{y_1^{1/3}}+\frac{1}{y_2^{1/3}})$. In \cite{BM}, Norris proved that (\ref{eqn}) has a unique solution when $K(y,y')\leq w(y)w(y')$ for some sublinear function $w$ such that $a^{-\frac{d}{2}}w$ is also sublinear. So, in the Brownian case, this result applies when we pick $w(y)=c(y^{1/3}+y^{-1/3})$ for some constant $c$. If we assume the particles are making diffusions under periodic drift, then Corollary \ref{period} suggests that under certain scaling limit, we should take $K(y_1+y_2)=c_d(\bar{a}_1+\bar{a}_2)(r_1+r_2)^{d-2}$. In the example discussed earlier, we would have $K(y_1+y_2)\sim (y_1^{1/3}+y_2^{1/3})(\frac{1}{y_1^{1/6}}+\frac{1}{y_2^{1/6}})$ and the result still applies if we pick $w(y)=c(y^{1/3}+y^{-1/6})$.

However, in the Ornstein-Uhlenbeck case, Theorem \ref{ou} suggests that $K\sim\sqrt{\frac{b_1^2}{\tau_1}+\frac{b_2^2}{\tau_2}}(y_1^{1/3}+y_2^{1/3})^2$ and the effective diffusivities of the two particles are $\frac{b_1^2}{\tau_1^2}$ and $\frac{b_2^2}{\tau_2^2}$ respectively. For $i=1,2$, in \cite{mechanical}, it is assumed that the drag force on a particle is caused by the particle being hit by random particles of much smaller sizes and higher speed and it has been shown that under certain scaling limit it is appropriate to choose $\tau_i=y_i^{-1/3}$ and $b_i=y_i^{-2/3}$. On the other hand, according to Einstein relation, where it is assumed that the drag force is caused by friction, the appropriate choice would be $\tau_i=y_i^{-2/3}$ and $b_i=y_i^{-5/6}$. In both cases, we have 
\begin{displaymath}
K\sim (y_1^{1/3}+y_2^{1/3})^2\sqrt{\frac{1}{y_1}+\frac{1}{y_2}},
\end{displaymath}
and the effective diffusivity of a particle with mass $y$ would be $y^{-2/3}$ according to \cite{mechanical} and $y^{-1/3}$ according to Einstein's relation. In both cases, we cannot directly apply prior results to obtain existence and uniqueness of the solution. Therefore, we will investigate alternative approaches to the well-posedness of (\ref{eqn}).

We assume the following conditions throughout this thesis 

(i) $K(y,z)\leq w(y)w(z)$ with $w:(0,\infty)\rightarrow (0,\infty)$ a non-decreasing sublinear function.

(ii) For some $\delta>0$, $\mu_0\mathbf{1}_{y<\delta}=0$. 

(iii) The diffusivity $a$ is strictly positive and measurable. 

Write $\mathcal{M}[0,T]$ for the set of measurable kernels 
\begin{displaymath}
\mu:[0,T]\times\mathbb{R}^d\times\mathcal{B}(0,\infty)\rightarrow [0,\infty].
\end{displaymath}
So, for time $t$ and position $x$, $\mu_t(x,.)$ is a measure on $\mathcal{B}(0,\infty)$. We will also use the notation $\langle f,\mu_t\rangle(x)=\int_0^\infty f(y)\mu_t(x,dy)$ for $f:(0,\infty)\rightarrow (0,\infty)$. We call a process $\mu_t\in\mathcal{M}[0,T]$ a solution of $(\ref{eqn})$ if it satisfies $(\ref{eqn})$ for $t\leq T$ and
\begin{displaymath}
\sup_{t\leq T}\|\langle y,\mu_t\rangle\|_1<\infty.
\end{displaymath}
This notion of solutions will also be used throughout this thesis for other pdes. It has been shown that
\begin{displaymath}
\|\langle y,\mu_t\rangle\|_1\leq\|\langle y,\mu_0\rangle\|_1,
\end{displaymath}
provided both sides are finite, see \cite{continuous}.
\begin{thm}\label{ssthm1}
Assume conditions (i), (ii) and (iii) hold. Let $(\mu_t^1)_{t\leq T}$ and $(\mu_t^2)_{t\leq T}$ be solutions of $(\ref{eqn})$ such that for $i=1,2$, $\sup_{t\leq T}\|\langle w^2,\mu^i_t \rangle\|_\infty<\infty$. Then $\mu^1=\mu^2$.
\end{thm}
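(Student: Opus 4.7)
The plan is to set $\nu_t := \mu^1_t - \mu^2_t$ and $\sigma_t := \mu^1_t + \mu^2_t$ and prove $\nu\equiv 0$ by a Gronwall argument. Subtracting the two copies of~(\ref{eqn}), and using the algebraic identity $\mu^1\otimes\mu^1 - \mu^2\otimes\mu^2 = \tfrac{1}{2}(\sigma\otimes\nu + \nu\otimes\sigma)$ together with the symmetry of $K$, the differences $K^{\pm}(\mu^1_s) - K^{\pm}(\mu^2_s)$ become symmetric bilinear forms $B^{\pm}(\sigma_s,\nu_s)$ that are linear in $\nu_s$. Taking total variation gives
\begin{displaymath}
|\nu_t| \leq \int_0^t P_{t-s}\bigl[B^+(\sigma_s,|\nu_s|) + B^-(\sigma_s,|\nu_s|)\bigr]\,ds.
\end{displaymath}

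To close a Gronwall inequality I would test against $\chi(x)w(y)$ with $\chi\geq 0$ a compactly supported cut-off in $x$. Since assumption~(iii) does not give a uniform lower bound on $a$, the semigroup $P_t$ need not be $L^\infty_x$-contractive; I sidestep this by moving $P_{t-s}$ to its dual, where it acts on $\chi$ as the standard heat semigroup $T^{a(y)}_{t-s}$ with constant diffusivity $a(y)$, satisfying $T^{a(y)}_{t-s}\chi\leq\|\chi\|_\infty$ and $\int T^{a(y)}_{t-s}\chi\,dx = \int\chi\,dx$ uniformly in $a(y)$. For the bilinear integrand, the sublinearity $w(y+y')\leq w(y)+w(y')$ together with $K\leq w\otimes w$ produces pointwise in $x$
\begin{displaymath}
\langle w, B^{\pm}(\sigma,|\nu|)\rangle(x) \leq C\bigl[\langle w^2,\sigma\rangle\langle w,|\nu|\rangle + \langle w,\sigma\rangle\langle w^2,|\nu|\rangle\bigr](x).
\end{displaymath}

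Assumption~(ii) is preserved by the dynamics (coagulation only creates larger particles), hence $\sigma_t, |\nu_t|$ are supported on $\{y\geq\delta\}$, where $w\leq w^2/w(\delta)$; this, together with $|\nu_s|\leq\sigma_s$ and the moment hypothesis, gives the uniform pointwise bounds $\langle w^k,\sigma_s\rangle, \langle w^k,|\nu_s|\rangle \leq M$ for $k\in\{1,2\}$. Setting $\varphi_R(t):=\int_{B_R}\langle w,|\nu_t|\rangle\,dx$ and combining the above ingredients should yield a closed integral inequality $\varphi_R(t)\leq C_R\int_0^t\varphi_R(s)\,ds$; Gronwall with $\varphi_R(0)=0$ then gives $\varphi_R\equiv 0$, and $R\to\infty$ concludes $\nu\equiv 0$.

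The main obstacle is the asymmetric cross-term $\langle w,\sigma\rangle\langle w^2,|\nu|\rangle$: even with the pointwise bound $\langle w^2,|\nu_s|\rangle\leq M$, integrating against a non-localized weight produces a divergent source, so the closure of the Gronwall loop is not automatic. Two natural ways to proceed are a bootstrap argument exploiting $\nu_0\equiv 0$ (so $\langle w^2,|\nu_t|\rangle = o(1)$ as $t\to 0$, and the bound on $\varphi_R$ can be iteratively improved), or a Feynman--Kac reformulation that absorbs the loss $K^-(\mu)(x,dy)=(\int K(y,y')\mu(x,dy'))\mu(x,dy)$ into the semigroup; assumption~(ii), by enabling the $w^2$-to-$w$ trade on the support, is precisely what makes either of these routes viable.
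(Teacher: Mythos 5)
There is a genuine gap, and you have correctly diagnosed exactly where it is. Your symmetric decomposition $\nu=\mu^1-\mu^2$, $\sigma=\mu^1+\mu^2$ inevitably produces the cross-term $\langle w,\sigma\rangle\langle w^2,|\nu|\rangle$: using $w(y+y')\leq w(y)+w(y')$ together with $K\leq w\otimes w$ forces one of the two $w$-factors in $w^2$ to land on the $\sigma$-slot roughly half the time, and the remaining $\langle w^2,|\nu|\rangle$ cannot be related back to $\varphi_R(t)=\int_{B_R}\langle w,|\nu_t|\rangle\,dx$ — it is only pointwise bounded, never linearly. Neither of your two suggested repairs closes this as stated: the bootstrap in $t$ does not work because $\langle w^2,|\nu_t|\rangle$ is an $L^\infty_x$ quantity not controlled by any small power of the $L^1_x$ quantity $\varphi_R(t)$, and the Feynman--Kac absorption removes $K^-$ but leaves the gain term, which is where half of the problematic cross-term comes from.

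The paper escapes by never writing the symmetric $\sigma\otimes\nu+\nu\otimes\sigma$ decomposition. Instead it replaces the gain term $K^+$ by the \emph{mass-fractional} kernels
\begin{displaymath}
K^{\nu+}(\mu)(x,A)=\int_0^\infty\int_0^\infty \mathbf{1}_{y+y'\in A}\,\frac{y'}{y+y'}\,K(y,y')\,\nu(x,dy)\,\mu(x,dy'),
\qquad
K^{\nu-}(\mu)(x,A)=\int_A\mu(x,dy')\int_0^\infty K(y,y')\,\nu(x,dy),
\end{displaymath}
which satisfy $K^{\mu+}(\mu)=K^+(\mu)$ and $K^{\mu-}(\mu)=K^-(\mu)$. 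The point of the factor $\tfrac{y'}{y+y'}$ is that, since $w$ is non-decreasing and sublinear, $\tfrac{y'}{y+y'}w(y+y')\leq w(y')$: the linear weight $w$ is always attributed to the $y'$-slot. Consequently the linearized flow $\Phi^i_{s,t}$ (solving $q_t+\int_s^tP_{t-r}K^{i-}_r(q_r)\,dr=P_{t-s}q_s+\int_s^tP_{t-r}K^{i+}_r(q_r)\,dr$) is a contraction for $\|\langle w,|\cdot|\rangle\|_1$, and the variation-of-constants formula
\begin{displaymath}
\mu^2_t-\mu^1_t=\int_0^t\Phi^1_{s,t}\bigl[(K^{\mu^2}_s-K^{\mu^1}_s)(\mu^2_s)\bigr]\,ds
\end{displaymath}
exposes a source $(K^{\mu^2}_s-K^{\mu^1}_s)(\mu^2_s)$ that is linear in $\nu_s$ with the \emph{right} weight allocation: the $\nu$-slot always receives $w(y)$ and the $\mu^2$-slot $w(y')^2$, giving $\|\langle w,|(K^{\mu^2}_s-K^{\mu^1}_s)(\mu^2_s)|\rangle\|_1\leq 2\|\langle w,|\nu_s|\rangle\|_1\,\|\langle w^2,\mu^2_s\rangle\|_\infty$, and Gronwall closes. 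In short: your estimate is sharp for the symmetric decomposition, and the theorem requires the tagged-particle (mass-fractional) linearization to avoid the $\langle w,\sigma\rangle\langle w^2,|\nu|\rangle$ term altogether — this is the missing idea.
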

In \cite{moment}, Hammond and Rezakhanlou proved that when the mass $y$ takes integer values, there is at most one solution $\mu$ such that $\sup_{t\leq T}\|\langle w^2,\mu_t \rangle\|_\infty<\infty$. Our result works in the case when $y$ can take values in positive real numbers, and we will see that the method we used gives a natural iteration scheme which can prove the existence result under certain conditions. Moreover this theorem works for a wide range of situations. There is no explicit requirement for the diffusivities and the condition $\sup_{t\leq T}\|\langle w^2,\mu_t \rangle\|_\infty<\infty$ looks reasonable.

\begin{thm}\label{ssthm2}
Write $p(y)=p^{t,x',x}(y)$. We assume that the function $w$ can be chosen so that for some constant $C$
\begin{equation}\label{ssineq}
\frac{y}{y+y'}w^2(y+y')p(y+y')-w^2(y)p(y)\leq C[w(y)w(y')p(y)+w(y)w(y')p(y')].
\end{equation}
If in addition, the initial kernel $\mu_0$ satisfies $\sup_{t>0}\|\langle w^2,P_t(\mu_0) \rangle\|_\infty<\infty$ and $\|\langle y,\mu_0\rangle\|_1<\infty$, then there exists $T>0$ such that there exists a unique solution $\mu$ to our PDEs up to time $T$. Moreover $\mu$ satisfies $sup_{t\leq T}\|\langle w^2,\mu_t \rangle\|_\infty<\infty$.\end{thm}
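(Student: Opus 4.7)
Uniqueness of any solution satisfying $\sup_{t\leq T}\|\langle w^2,\mu_t\rangle\|_\infty<\infty$ is exactly Theorem~\ref{ssthm1}, so the content here is existence. My plan is to solve the integral form (\ref{eqn}) by Picard iteration: set $\mu^{(0)}_t=P_t\mu_0$ and
$$
\mu^{(n+1)}_t = P_t\mu_0 + \int_0^t P_{t-s}\bigl[K^+(\mu^{(n)}_s)-K^-(\mu^{(n)}_s)\bigr]\,ds.
$$
If one needs non-negativity of the iterates one can instead use the equivalent Feynman--Kac variant that keeps $K^-(\mu^{(n+1)})$ implicit on the left; the estimates below only tighten under that change.

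The a priori estimate is driven by (\ref{ssineq}). Writing $w^2(y_1+y_2)=\tfrac{y_1}{y_1+y_2}w^2(y_1+y_2)+\tfrac{y_2}{y_1+y_2}w^2(y_1+y_2)$, multiplying by $p=p^{t-s,x',x}$ and applying (\ref{ssineq}) with $(y,y')=(y_1,y_2)$ and $(y_2,y_1)$ and summing yields
$$
w^2(y_1+y_2)p(y_1+y_2)-w^2(y_1)p(y_1)-w^2(y_2)p(y_2)\ \leq\ 2C\,w(y_1)w(y_2)\bigl[p(y_1)+p(y_2)\bigr].
$$
Inserted into the symmetric bilinear representations of $\langle w^2,P_{t-s}K^\pm(\nu)\rangle(x)$, and combined with $K\le w\otimes w$, this gives
$$
\bigl\langle w^2,\,P_{t-s}[K^+(\nu)-K^-(\nu)]\bigr\rangle(x)\ \leq\ 2C\,\|\langle w^2,\nu\rangle\|_\infty\,\langle w^2,P_{t-s}\nu\rangle(x).
$$
Now apply $P_{t-s}$ to the iteration (using $P_{t-s}P_s=P_t$) and define
$$
\Psi_n=\sup_{0\le s\le t\le T,\ x\in\mathbb{R}^d}\langle w^2,P_{t-s}\mu^{(n)}_s\rangle(x),\qquad N=\sup_{t>0}\|\langle w^2,P_t\mu_0\rangle\|_\infty.
$$
Then $\Psi_0=N$, the specialization $s=t$ yields $\|\langle w^2,\mu^{(n)}_t\rangle\|_\infty\le\Psi_n$, and the key estimate produces the quadratic recursion $\Psi_{n+1}\le N+2CT\,\Psi_n^2$. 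Choosing $T\le 1/(8CN)$ and inducting delivers $\Psi_n\le 2N$ for every $n$, which is exactly the target moment bound.

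To close the argument I would then prove geometric contraction for $\delta^{(n)}=\mu^{(n)}-\mu^{(n-1)}$ in the analogous $\Psi$-type seminorm. The difference $[K^+-K^-](\mu^{(n)})-[K^+-K^-](\mu^{(n-1)})$ is bilinear in $\mu^{(n)}+\mu^{(n-1)}$ and $\delta^{(n)}$, and applying the same symmetric splitting of (\ref{ssineq}) to the positive and negative parts of $\delta^{(n)}$, together with the a priori bound $\Psi_n,\Psi_{n-1}\le 2N$, yields an inequality of the shape $\Psi(\delta^{(n+1)})\le 8CTN\,\Psi(\delta^{(n)})$, hence geometric contraction once $T$ is possibly shrunk further. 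Passing to the limit then produces a solution of (\ref{eqn}) with $\sup_{t\le T}\|\langle w^2,\mu_t\rangle\|_\infty\le 2N$ by Fatou, and uniqueness is inherited from Theorem~\ref{ssthm1}.

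I expect the main obstacle to be that the natural norm $\|\langle w^2,\cdot\rangle\|_\infty$ is \emph{not} preserved by the propagator $P_{t-s}$: because the kernel $p^{t-s,x',x}(y)$ depends on the mass $y$ through the mass-dependent diffusivity $a(y)$, the $x$-supremum cannot be commuted with the $y$-integration, so one cannot bound $\langle w^2,P_{t-s}\mu\rangle$ by $\|\langle w^2,\mu\rangle\|_\infty$. The $\Psi$-seminorm, which folds the heat propagation into its definition, is designed precisely to sidestep this, and hypothesis (\ref{ssineq}) is the minimal structural condition making the quadratic recursion on $\Psi$ close and the contraction argument succeed.
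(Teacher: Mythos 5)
Your a priori estimate and the diagnosis of the difficulty match the paper: the seminorm $\Psi(\mu)=\sup_{0\le s,\,t\le T}\|\langle w^2,P_s\mu_t\rangle\|_\infty$ is exactly what the paper works with, and the quadratic recursion $\Psi_{n+1}\le N+2CT\Psi_n^2$ driven by the symmetrized form of (\ref{ssineq}) is correct and is the same core inequality the paper exploits. However, your contraction step is where the argument breaks. The paper does \emph{not} iterate the full Picard map, and it does \emph{not} contract in the $\Psi$-type seminorm. It instead linearizes: given a candidate $\nu$, it defines $f(\nu)=q$ where $q$ solves the \emph{linear} equation $q_t=P_tq_0+\int_0^tP_{t-s}K^{\nu}_s(q_s)\,ds$ with $K^\nu$ the bilinear-split coagulation operator, then proves (Proposition \ref{propbrownian}) that $f$ preserves $\{\Psi\le c\}$, and then shows $f$ is a contraction in the $L^1$-type metric $d_T(\mu^1,\mu^2)=\sup_{s\le T}\|\langle w,|\mu^1_s-\mu^2_s|\rangle\|_1$, using the variation-of-constants formula and the fact that the linear flow $\Phi^1_{s,t}$ is non-expansive for $\|\langle w,|\cdot|\rangle\|_1$ (a mass-dissipation property coming from sublinearity of $w$). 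Your plan to close the contraction in the $\Psi$-norm is what I do not think goes through: when you pass to $|\delta^{(n+1)}|$ and try to control $\langle w^2,P_s|\delta^{(n+1)}_t|\rangle$, the triangle inequality severs $K^+$ from $K^-$, and (\ref{ssineq}) only controls the \emph{signed} combination $\frac{y}{y+y'}w^2(y+y')p(y+y')-w^2(y)p(y)$. Splitting $\delta$ into Jordan parts does not obviously restore that cancellation inside $|\delta^{(n+1)}|$, because you do not know the sign of the output, only of the input factors. So the asserted bound $\Psi(\delta^{(n+1)})\le 8CTN\,\Psi(\delta^{(n)})$ is not justified; switching to the $L^1$-weighted metric, as the paper does, is what makes the fixed-point argument close.

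There is a second, separate gap on the uniqueness side. You reduce uniqueness to Theorem \ref{ssthm1}, but Theorem \ref{ssthm1} is conditional: it only gives uniqueness among solutions satisfying $\sup_{t\le T}\|\langle w^2,\mu^i_t\rangle\|_\infty<\infty$. Theorem \ref{ssthm2} claims uniqueness among \emph{all} solutions on $[0,T]$, so one must also show that any solution of (\ref{eqn}) with the given $\mu_0$ automatically has $\sup_{t\le T}\|\langle w^2,\mu_t\rangle\|_\infty<\infty$ for some $T>0$ depending only on the data. The paper supplies this as a separate step (Section 5.4): applying $P_s$ to (\ref{eqn}), testing against $\mathbf{1}_{y\le z}w^2$, using (\ref{ssineq}) in the same symmetrized form, and solving the resulting Riccati-type inequality $h(t)\le h(0)+2C\int_0^t h(s)^2\,ds$. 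Your Picard scheme produces a solution with the moment bound but does not establish that bound for an arbitrary, unrelated solution; without that, the reduction to Theorem \ref{ssthm1} leaves the uniqueness claim incomplete.
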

Note that (\ref{ssineq}) is satisfied if $w(y)=c_1y^u$ and $a(y)=c_2y^{-v}$ with $0<u\leq 1$ and $c_1,c_2,v>0$. To see this, we note that 
$\frac{p(y)}{p(y+y')}\geq (\frac{y}{y+y'})^{vd/2}$. By dividing both side of (\ref{ssineq}) by $p(y+y')$, it suffices to show that
\begin{displaymath}
\frac{y}{y+y'}(y+y')^{2u}-(y)^{2u}(\frac{y}{y+y'})^{vd/2}\leq C[(yy')^u(\frac{y}{y+y'})^{vd/2}+(yy')^u(\frac{y'}{y+y'})^{vd/2}],
\end{displaymath}
for some $C$. As this inequality is homogeneous, we can assume $y'=1$. Then, multiplying both sides by $\frac{(y+1)^{vd/2}}{y}$, it suffices to show that
\begin{displaymath}
(y+1)^{2u+vd/2-1}-y^{2u+vd/2-1}\leq C(y^{u+vd/2-1}+y^{u-1})
\end{displaymath}
for some $C$. When $y\leq 1$, this is true because the left hand side of the above inequality is at most $2^{2u+vd/2-1}$ while $y^{u-1}\geq 1$. When $y\geq 1$, this is also true because $\frac{(y+1)^{2u+vd/2-1}-y^{2u+vd/2-1}}{y^{u+vd/2-1}}$ is continuous on $y\geq 1$ and
\begin{displaymath}
\limsup_{y\rightarrow\infty}\frac{(y+1)^{2u+vd/2-1}-y^{2u+vd/2-1}}{y^{u+vd/2-1}}<\infty.
\end{displaymath}
\begin{cor}\label{cor1}
When $
K(y_1,y_2)=(y_1^{1/3}+y_2^{1/3})^2\sqrt{\frac{1}{y_1}+\frac{1}{y_2}}
$, and when $a(y)=y^{-\frac{1}{3}}$ or $a(y)=y^{-2/3}$, there exists $T>0$ such that there exists a unique solution to our PDEs up to time $T$.
\end{cor}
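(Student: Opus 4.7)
The plan is to apply Theorem \ref{ssthm2} directly, by producing a non-decreasing sublinear $w$ with $K\leq w\cdot w$ and verifying the structural inequality (\ref{ssineq}) via the remark immediately following the theorem.

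Since $\mu_0$ is supported in $\{y\geq \delta\}$ by condition (ii), and coagulation only produces masses $\geq \delta$, the solution $\mu_t$ remains supported in $\{y\geq \delta\}$ for every $t$, so only the behaviour of $K$ on $[\delta,\infty)^2$ is relevant. Using the elementary bounds $(y_1^{1/3}+y_2^{1/3})^2\leq 4\max(y_1,y_2)^{2/3}$ and $\sqrt{y_1^{-1}+y_2^{-1}}\leq \sqrt{2/\min(y_1,y_2)}$, together with the identity $\max(y_1,y_2)^{2/3}\min(y_1,y_2)^{-1/2}=(y_1y_2)^{2/3}\min(y_1,y_2)^{-7/6}$, I obtain
\[
K(y_1,y_2)\leq 4\sqrt{2}\,\delta^{-7/6}(y_1y_2)^{2/3}\qquad\text{on }[\delta,\infty)^2.
\]
I would then set $w(y)=c_1 y^{2/3}$ with $c_1=(4\sqrt{2})^{1/2}\delta^{-7/12}$; this $w$ is non-decreasing and sublinear on $(0,\infty)$, and after harmless truncation of $K$ below $\delta$ (where $\mu_t$ never places any mass), condition (i) holds globally.

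Now, both $a(y)=y^{-1/3}$ and $a(y)=y^{-2/3}$ have the form $a(y)=c_2 y^{-v}$ with $v>0$, and our $w$ has the form $c_1 y^u$ with $u=2/3\in(0,1]$; the remark immediately following Theorem \ref{ssthm2} therefore confirms (\ref{ssineq}) in both cases. Condition (iii) is immediate since $a$ is strictly positive and continuous on $(0,\infty)$. Under the standing hypotheses on $\mu_0$ required by Theorem \ref{ssthm2}, namely $\|\langle y,\mu_0\rangle\|_1<\infty$ and $\sup_{t>0}\|\langle w^2,P_t(\mu_0)\rangle\|_\infty<\infty$, that theorem yields $T>0$ and a unique solution to (\ref{eqn}) on $[0,T]$.

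The main subtlety I anticipate is that $K(y_1,y_2)$ is singular as either mass tends to zero (it blows up like $y_i^{-1/2}$), so no non-decreasing sublinear $w$ can dominate $K$ globally on $(0,\infty)^2$. This is resolved cleanly by invoking condition (ii): the small-mass region is dynamically inaccessible, so replacing $K$ by $K\mathbf{1}_{\{y_1,y_2\geq\delta\}}$ does not alter the equation while making the dominance hold everywhere.
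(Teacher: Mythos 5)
Your proof is correct and follows the same route as the paper's: restrict attention to $y_1,y_2\geq\delta$ using condition (ii), bound $K$ there by a constant multiple of $(y_1y_2)^{2/3}$, choose $w(y)=c\,y^{2/3}$, and invoke Theorem \ref{ssthm2} via the remark verifying (\ref{ssineq}) for power-law $w$ and $a$. The intermediate algebraic bounds differ slightly (the paper passes through $2(y_1^{2/3}+y_2^{2/3})(y_1^{-1/2}+y_2^{-1/2})$ rather than your $\max$/$\min$ identity), but the key choice $w\propto y^{2/3}$ and the truncation argument are identical, and you are appropriately more explicit about why the truncation of $K$ below $\delta$ is harmless.
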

\begin{proof}
Note that
\begin{displaymath}
K(y_1,y_2)\leq 2(y_1^{2/3}+y_2^{2/3})(y_1^{-1/2}+y_2^{-1/2})
\end{displaymath}
Since $\mu_0\mathbf{1}_{y<\delta}=0$, we only need to care about the case when $y_1, y_2 \geq\delta$. So, we can pick $w(y)=4\delta^{-7/6}y^{2/3}$.
\end{proof}
Now, we will give two cases where we can show the global existence of the solutions.
\begin{thm}\label{ssthm3}
If all conditions in Theorem \ref{ssthm2} are satisfied, then there exists a unique global solution to our PDEs in the following two cases:

(a)$K(y,y')\leq w(y)v(y')+w(y')v(y)$ for some $v$ such that $wvp$ is sublinear.

(b)
$\sup_{t>0}(1+t)^{1+\epsilon}\|\langle w^2,P_t(\mu_0)\rangle\|_{\infty}<c,$
for some $\epsilon>0$ and sufficiently small $c>0$ depending on $\epsilon$ and $C$.
\end{thm}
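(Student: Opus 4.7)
The plan is to bootstrap the local existence of Theorem \ref{ssthm2} to a global result via a priori bounds combined with a continuation argument. Uniqueness is automatic: once we produce a solution on $[0,\infty)$ with $\sup_{t \leq T}\|\langle w^2,\mu_t\rangle\|_\infty<\infty$ on every finite $[0,T]$, Theorem \ref{ssthm1} rules out any other solution with the same property. So the content of Theorem \ref{ssthm3} is global existence, and the game is to prevent the quantity $M(t):=\|\langle w^2,\mu_t\rangle\|_\infty$ from blowing up in finite time; a standard continuation past the maximal local existence time $T_*$ then forces $T_*=\infty$.

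The starting point is the integral inequality that already drives the proof of Theorem \ref{ssthm2}. Pairing the mild formulation (\ref{eqn}) pointwise in $x$ with $w^2$ and symmetrising the gain term in $(y,y')$ produces an integrand of exactly the form appearing on the left of (\ref{ssineq}); applying that inequality yields an estimate of schematic shape
\[
M(t)\;\leq\;\|\langle w^2,P_t\mu_0\rangle\|_\infty+C\int_0^t A(\mu_s;t-s)\,ds,
\]
where $A$ is a product of $w$-weighted moments of $\mu_s$ transported by $P_{t-s}$. In case (a) I would, when estimating this gain, use the refined bound $K\leq wv+vw$ in place of the crude $K\leq w\cdot w$. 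The sublinearity of $wvp$ then lets one replace $wvp(y+y')$ inside the gain integral by $wvp(y)+wvp(y')$, which reduces the offending $\langle wv,\mu_s\rangle$ factor to mass-type moments. These are controlled uniformly in $t$ by mass conservation $\|\langle y,\mu_t\rangle\|_1\leq\|\langle y,\mu_0\rangle\|_1$ together with the lower bound $y\geq\delta$ from assumption (ii). The resulting inequality for $M$ is linear Gronwall, giving at most exponential growth and hence global existence.

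For case (b) the strategy is a fixed-point argument in a weighted space. Define $\|\nu\|_{\epsilon}:=\sup_{t>0}(1+t)^{1+\epsilon}\|\langle w^2,\nu_t\rangle\|_\infty$; the hypothesis is $\|P_{\cdot}\mu_0\|_{\epsilon}\leq c$. Let
\[
\Phi(\nu)_t:=P_t\mu_0+\int_0^t P_{t-s}\bigl[K^+(\nu_s)-K^-(\nu_s)\bigr]\,ds
\]
be the Duhamel operator associated with (\ref{eqn}). I would establish nonlinear estimates of the form $\|\Phi(\nu)\|_{\epsilon}\leq c+\widetilde C\,\|\nu\|_{\epsilon}^{\,2}$ and $\|\Phi(\nu)-\Phi(\nu')\|_{\epsilon}\leq \widetilde C(\|\nu\|_{\epsilon}+\|\nu'\|_{\epsilon})\,\|\nu-\nu'\|_{\epsilon}$; once these hold, taking $c$ small depending on $\widetilde C$ and $\epsilon$ makes $\Phi$ a contraction on the ball of radius $2c$, and its unique fixed point is a global solution satisfying $M(t)\leq 2c(1+t)^{-(1+\epsilon)}$.

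The main obstacle is the nonlinear estimate in case (b). The naive application of (\ref{ssineq}) to the quadratic term produces an integrand of order $(1+s)^{-(2+2\epsilon)}$, integrable on $[0,\infty)$ but without the decay $(1+t)^{-(1+\epsilon)}$ demanded by $\|\cdot\|_{\epsilon}$. The fix is to exploit the smoothing of $P_{t-s}$: split $\int_0^t=\int_0^{t/2}+\int_{t/2}^t$, and on the first piece use that $p^{t-s,x',x}(y)$ supplies an $L^\infty_x$-from-$L^1_x$ gain of size at most $(t-s)^{-d/2}\leq (t/2)^{-d/2}$, available because the coagulation terms are quadratic in $\mu_s$ and so carry an $L^1_x$ factor coming from mass conservation; on the second piece use the $(1+s)^{-(1+\epsilon)}$ decay of $M(s)$ directly. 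Balancing these two regimes and choosing $c$ small enough closes the bootstrap and, via the contraction mapping, delivers the global solution.
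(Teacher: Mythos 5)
The broad skeleton — an a priori bound on $\|\langle w^2,\mu_t\rangle\|_\infty$ propagated by Gronwall, plus continuation of the local solution — matches the paper's. But both of your key estimates rest on mass conservation, which bounds $\|\langle y,\mu_t\rangle\|_1$ (an $L^1_x$ quantity) and cannot control the $L^\infty_x$ quantities you actually need.

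In case (a), after using $K\le wv+vw$ the Gronwall step requires bounding $\|\langle wv,\mu_r\rangle\|_\infty$ and $\langle wv,P_{s+t-r}\mu_r\rangle(x)$, not mass-type $L^1$ moments. The paper gets these from the \emph{sign} of the coagulation contribution: sublinearity of $wvp$ makes $\langle wv,P_\sigma K(\mu_r)\rangle\le 0$ pointwise in $x$ (write $\langle wv,P_\sigma K(\mu)\rangle=\tfrac12\iint\bigl[wvp(y+y')-wvp(y)-wvp(y')\bigr]K\,\mu\,\mu\le 0$), whence $\langle wv,P_s\mu_t\rangle\le\langle wv,P_{s+t}\mu_0\rangle$, a bound uniform in $x$, $s$, $t$. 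That turns the quadratic Gronwall into a linear one directly, with no appeal to $\|\langle y,\mu_t\rangle\|_1$. Your derivation stops short of this and instead reaches for an $L^1_x$ bound that does not transfer to $\|\cdot\|_\infty$.

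In case (b), the paper's norm is $h(t)=\sup_{s\ge 0}(1+s+t)^{1+\epsilon}\|\langle w^2,P_s\mu_t\rangle\|_\infty$, not $\sup_t(1+t)^{1+\epsilon}\|\langle w^2,\mu_t\rangle\|_\infty$. The auxiliary semigroup parameter $s$ and the $s+t$ weight are the whole trick: inside the Duhamel integral $P_sP_{t-r}=P_{s+t-r}$, so $(1+s+t)^{1+\epsilon}\langle w^2,P_{s+t-r}\mu_r\rangle=(1+s'+r)^{1+\epsilon}\langle w^2,P_{s'}\mu_r\rangle\le h(r)$ with $s'=s+t-r\ge 0$, and the outstanding $\|\langle w^2,\mu_r\rangle\|_\infty$ factor contributes the decaying prefactor $(1+r)^{-(1+\epsilon)}$. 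The Gronwall inequality $h(t)\le c+C\int_0^t h(r)^2(1+r)^{-(1+\epsilon)}dr$ then closes with no splitting. Your proposed fix via the $L^\infty$-from-$L^1$ heat-kernel smoothing runs into the same difficulty as case (a): to make that split work you would need an $L^1_x$ bound on $\langle w^2,\mu_s\rangle$ or $\langle w^3,\mu_s\rangle$, and mass conservation only gives you $\langle y,\mu_s\rangle$ in $L^1_x$; these do not coincide under the standing assumptions ($w$ is merely sublinear). Moreover, the $(t-s)^{-d/2}$ singularity is not integrable at $s=t$ for $d\ge 3$, so the near-diagonal contribution must be handled anyway, and once you must estimate it by $\|\langle w^2,\mu_s\rangle\|_\infty$ you are back where you started without the $(1+s+t)$ trick.

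Finally, the uniqueness in Theorem \ref{ssthm3} is asserted among all solutions, whereas your remark only rules out competitors that already satisfy the $w^2$-moment bound. The paper's separate uniqueness step shows that \emph{every} solution automatically satisfies $\sup_{t\le T}\|\langle w^2,\mu_t\rangle\|_\infty<\infty$ in both cases (via the same a priori inequalities), and only then invokes Theorem \ref{ssthm1}. Your outline omits this a priori regularity step.
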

Taking $v(y)=y^{-\frac{1}{2}}$ and $w(y)=4\sqrt{2}y^{2/3}$, the condition $(a)$ is satisfied for our case where
\begin{displaymath}
K(y_1,y_2)=(y_1^{1/3}+y_2^{1/3})^2\sqrt{\frac{1}{y_1}+\frac{1}{y_2}}
\end{displaymath}
and the diffusivity $a(y)=y^{-\frac{1}{3}}$. Condition $(b)$ is satisfied if, for example,
\begin{displaymath}
\int_{\mathbb{R}^d}\int_0^\infty\mu_0(x,dy)w^2(y)(1+a(y)^{-\frac{d}{2}})dx<h,
\end{displaymath}
for sufficiently small $h$. We can now conclude the following result.
\begin{cor}\label{cor2}
Assume $
K(y_1,y_2)=(y_1^{1/3}+y_2^{1/3})^2\sqrt{\frac{1}{y_1}+\frac{1}{y_2}}
$.
If $a(y)=y^{-\frac{1}{3}}$, then there exists a unique global solution. If $a=y^{-\frac{2}{3}}$ and $(b)$ is satisfied, then there also exists a unique global solution.
\end{cor}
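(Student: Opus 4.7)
The plan is to reduce Corollary \ref{cor2} directly to the two cases of Theorem \ref{ssthm3}, since the corollary only substitutes the specific $K$ and $a$ coming from the Ornstein--Uhlenbeck scaling. Both parts rely on first verifying the standing hypotheses of Theorem \ref{ssthm2} with the common choice $w(y) = 4\sqrt{2}\, y^{2/3}$: it is non-decreasing and sublinear, so condition (i) holds; conditions (ii) and (iii) are standing; and the algebraic condition (\ref{ssineq}) is already handled by the power-function computation performed right after Theorem \ref{ssthm2} with $u = 2/3 \in (0,1]$ and $v \in \{1/3, 2/3\}$. The initial moment control $\sup_{t>0}\|\langle w^2, P_t\mu_0\rangle\|_\infty<\infty$ is subsumed under condition (b) in the second case and is part of the hypotheses of Theorem \ref{ssthm3} carried into the first.

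For the case $a(y) = y^{-1/3}$, I would verify condition (a) of Theorem \ref{ssthm3} with $v(y) = y^{-1/2}$. The $K$-bound is elementary: assuming WLOG $y_1 \leq y_2$, we have $(y_1^{1/3}+y_2^{1/3})^2 \leq 4 y_2^{2/3}$ and $\sqrt{1/y_1+1/y_2} \leq \sqrt{2}\, y_1^{-1/2}$, giving $K(y_1,y_2) \leq 4\sqrt{2}\, y_2^{2/3} y_1^{-1/2} = w(y_2)v(y_1)$, and symmetrically in the other case, so $K(y_1,y_2) \leq w(y_1)v(y_2) + w(y_2)v(y_1)$. For the sublinearity of $wvp$, note that in $d=3$ with $a(y)=y^{-1/3}$ the Gaussian prefactor is $(2\pi y^{-1/3} t)^{-3/2} = (2\pi t)^{-3/2} y^{1/2}$, so $w(y)v(y)p^{t,x',x}(y) = 4\sqrt{2}(2\pi t)^{-3/2} y^{2/3} \exp\bigl(-|x-x'|^2 y^{1/3}/(2t)\bigr)$, which grows at most like $y^{2/3}$ in $y$ (and decays exponentially when $x \ne x'$), so it is sublinear uniformly for each fixed $(t,x,x')$. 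Theorem \ref{ssthm3}(a) then yields global existence and uniqueness.

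For the case $a(y) = y^{-2/3}$, the argument is essentially a direct invocation: Condition (b) is assumed as part of the corollary's hypothesis, all Theorem \ref{ssthm2} hypotheses are already verified (as above), so Theorem \ref{ssthm3}(b) applies verbatim. It is worth observing here why the cleaner route through (a) is not available: with $a(y) = y^{-2/3}$ in $d=3$, the prefactor of $p$ becomes of order $y^1$, whence $wvp$ grows like $y^{7/6}$, failing sublinearity. This is precisely what forces the smallness assumption (b) in the second case.

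The main obstacle is purely bookkeeping: picking the right constant in $w$ so that the two-sided bound on $K$ closes without using the $y \geq \delta$ support condition, and confirming sublinearity of $wvp$ pointwise in $(t,x,x')$ in the form consumed by Theorem \ref{ssthm3}(a). No new analytic ideas are needed beyond what Theorems \ref{ssthm2} and \ref{ssthm3} already provide.
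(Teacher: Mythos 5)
Your proposal follows exactly the route the paper takes: the text immediately preceding Corollary \ref{cor2} already names $v(y)=y^{-1/2}$ and $w(y)=4\sqrt{2}\,y^{2/3}$ and asserts that hypothesis (a) of Theorem \ref{ssthm3} holds when $a(y)=y^{-1/3}$, while the case $a(y)=y^{-2/3}$ is handled by simply invoking (b); you have merely made the short verifications (the $K\leq w(y_1)v(y_2)+w(y_2)v(y_1)$ split and the exponent count showing $wvp\sim y^{2/3}$ versus $y^{7/6}$) explicit, which matches the paper's intent. One small caveat: you write that ``(i) holds'' because $w$ is non-decreasing and sublinear, but (i) also asks for $K(y,z)\leq w(y)w(z)$, which with $w(y)=4\sqrt2\,y^{2/3}$ fails near $0$ and is only true on $[\delta,\infty)^2$; as in the paper's proof of Corollary \ref{cor1}, this is where condition (ii) must be used (or the constant in $w$ taken $\delta$-dependent), a point the paper glosses over equally.
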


So far, we have investigated the Smoluchowski equations modeling coagulating particles whose free motions are (approximately) Brownian. A natural question to ask is what if the particles' free motions are Brownian with a space and mass dependent drift. In (\ref{eqn}), $P_t$ was defined to be
\begin{displaymath}
P_t\mu(x,dy)=\int_{\mathbb{R}^d}\mu(x',dy)p^{t,x',x}(y)dx',
\end{displaymath}
with $p^{t,x',x}(y)$ is the transition density of a Brownian particle with diffusivity $a(y)$. We now consider the case where $p$ is instead the transition density of a Brownian particle with a space and mass dependent drift. More precisely, consider a particle whose free motion satisfies $X_0=x'$ and
\begin{displaymath}
dX_t=\sqrt{a(y)}dB_t+b_t(x,y)dt,
\end{displaymath}
with $b$ bounded and measurable in $x$, then we let $p^{t,x',x}(y)$ denote the probability density function of $X_t$ evaluated at $x$. The following theorem gives sufficient conditions for the well-posedness of (\ref{eqn}) in this case.
\begin{thm}\label{driftcase}
Theorem \ref{ssthm1}, Theorem \ref{ssthm2} and Theorem \ref{ssthm3} still hold in the case described above.
\end{thm}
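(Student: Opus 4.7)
The proof strategy is to show that the transition density $p^{t,x',x}(y)$ of the drifted process admits Gaussian-type estimates close enough to those of the drift-free Brownian motion that the three earlier theorems carry over verbatim, up to adjustment of multiplicative constants. The plan has three stages: establish two-sided Gaussian bounds for the drifted density, verify that the key inequality (\ref{ssineq}) survives with a modified constant, and then audit each of Theorem \ref{ssthm1}, Theorem \ref{ssthm2} and Theorem \ref{ssthm3} to confirm their proofs still run.

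For the first step I would invoke Aronson-type parabolic estimates (or a direct Girsanov computation) to obtain, for $t\in(0,T]$, constants $c_1,\ldots,c_4$ depending on $T$, $\|b\|_\infty$ and $a(y)$ such that
\[
c_1(a(y)t)^{-d/2}e^{-c_2|x-x'|^2/(a(y)t)} \;\leq\; p^{t,x',x}(y) \;\leq\; c_3(a(y)t)^{-d/2}e^{-c_4|x-x'|^2/(a(y)t)}.
\]
Because $\mu_0$ is supported on $y\geq\delta>0$, the diffusivity $a(y)$ is bounded above on the relevant range, so these constants are uniform in $y$ over any bounded mass set and the drift enters only as a bounded multiplicative perturbation.

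For the second step I would verify inequality (\ref{ssineq}). The only place the explicit Gaussian form of $p$ enters in the discussion following Theorem \ref{ssthm2} is through the ratio $p(y)/p(y+y')\geq (y/(y+y'))^{vd/2}$ under $a(y)=cy^{-v}$. The two-sided Gaussian bounds above give the same polynomial lower bound up to a multiplicative constant, so (\ref{ssineq}) continues to hold with a possibly larger $C$. For the third step I would audit the three theorems. Theorem \ref{ssthm1}'s uniqueness argument uses only Gaussian upper bounds on $p$ through Gronwall-type estimates, so it is unchanged. Theorem \ref{ssthm2}'s fixed-point iteration rests on (\ref{ssineq}) together with the hypothesis $\sup_t \|\langle w^2,P_t\mu_0\rangle\|_\infty<\infty$, and the natural replacement is simply to use the drifted semigroup $P_t$, for which the iteration goes through. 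For Theorem \ref{ssthm3}(a), the sublinearity of $wvp$ as a function of $y$ depends only on the $y$-dependence of $p$ and the Gaussian exponential, both preserved under bounded drift (the Gaussian tail is shifted by at most $\|b\|_\infty t$). For Theorem \ref{ssthm3}(b), the required temporal decay $\|\langle w^2,P_t\mu_0\rangle\|_\infty\lesssim(1+t)^{-1-\eps}$ follows from the same $t^{-d/2}$ smoothing of $P_t$, which persists under bounded drift.

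The main obstacle will be controlling the $y$-dependence of the Gaussian constants: although $a(y)$ is bounded above on the support of $\mu_0$, a Girsanov density involves $b/\sqrt{a(y)}$ which can grow as $a(y)$ decreases at large $y$, so naive global constants in the Gaussian bounds may degenerate. I would handle this by either invoking divergence-form Aronson bounds for $\frac{a(y)}{2}\Delta + b\cdot\nabla$, which give more stable constants, or by absorbing any $y$-dependent factor into the polynomial growth of $w$, since every moment considered in the three theorems is weighted by $w$ chosen large enough to compensate. In either case the entire argument reduces to the drift-free case after replacing constants by $y$-uniform versions on the relevant mass range.
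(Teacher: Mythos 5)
Your strategy misses the paper's actual argument, which is a single observation: the proofs of Theorems \ref{ssthm1}, \ref{ssthm2} and \ref{ssthm3} never invoke the explicit Gaussian form of $p$. They rely only on structural properties of $P_t$---the Chapman--Kolmogorov identity $P_sP_t=P_{s+t}$, mass preservation, positivity, and the Feynman--Kac representation used in Proposition \ref{ssprop32}---all of which hold for the transition density of a diffusion with bounded measurable drift. Once $p$ is reinterpreted as the drifted density, the proofs carry over verbatim; no Gaussian estimates need to be established. Your third step (the audit) is the only one in the right spirit, but its details are off: the uniqueness proof of Theorem \ref{ssthm1} uses no Gaussian upper bound on $p$ at all, only the non-expansiveness of the linear flow $\Phi^1_{s,t}$ in the weighted norm $\|\langle w,|\cdot|\rangle\|_1$ (a consequence of mass preservation) combined with Gronwall.

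Your first two steps are aimed at the wrong target. Inequality (\ref{ssineq}) is a \emph{hypothesis} of Theorem \ref{ssthm2}, not a conclusion, and in the drift case it is simply reread as a hypothesis on the drifted $p$; there is nothing to ``verify.'' Your attempt to derive (\ref{ssineq}) for the drifted $p$ from two-sided Gaussian bounds would not work anyway: the exponential constants in your envelopes do not match (the lower bound must decay faster, so $c_2>c_4$), hence the ratio of the drifted $p$ to a fixed Gaussian kernel is unbounded in $|x-x'|$, and since (\ref{ssineq}) contains both a positive and a negative $p$-weighted term, one-sided comparisons do not transfer. More fundamentally, the paper explicitly remarks that the conditions of Theorems \ref{ssthm2} and \ref{ssthm3} are ``usually not satisfied or hard to verify'' once $p$ is replaced by the drifted density---this is exactly why Theorem \ref{driftcase2} and Lemma \ref{driftlem} are introduced, replacing $p$ by the explicitly computable worst-case density $q$. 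You are trying to prove something the paper indicates is generally impractical, when all that is required here is a check that the earlier proofs are structure-preserving.
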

However, Theorem \ref{ssthm2} and Theorem \ref{ssthm3} might not be very useful in the case when a drift term is involved, because the conditions required are usually not satisfied or hard to verify.  Therefore, we formulate some easy to check conditions for the well-posedness of (\ref{eqn}).

Suppose we have a function $B:(0,\infty)\rightarrow [0,\infty)$ such that for all $x\in\mathbb{R}^d,y\in (0,\infty)$, $t\geq 0$ and $i=1,2,...,d$, $|b_t^i(x,y)|\leq B(y)$. For $x,x'\in\mathbb{R}^d$, consider the process $X_0=x'$ and
\begin{displaymath}
dX_t=\sqrt{a(y)}dB_t+\mathbf{B}(X_t,y)dt,
\end{displaymath} 
where $\mathbf{B}(X_t,y)$ is the d-dimensional vector with $\mathbf{B}^i(X_t,y)=B(y)sgn(x^i-X^i_t)$. Let $q^{t,x',x}(y)$ be the probability density function of $X_t$ evaluated at $x$.  Define now
\begin{displaymath}
Q_t\mu(x,dy)=\int_{\mathbb{R}^d}\mu(x',dy)q^{t,x',x}(y)dx'.
\end{displaymath}
Usually, it is hard to compute $P$, but $Q$ can be evaluated explicitly. The following theorem allows us to check well-posedness of (\ref{eqn}) using properties on $q$.
\begin{thm}\label{driftcase2}
Write $q(y)=q^{t,x',x}(y)$. We assume that the function $w$ can be chosen so that for some constant $C$
\begin{equation}\label{drifteqn}
\frac{y}{y+y'}w^2(y+y')q(y+y')-w^2(y)q(y)\leq C[w(y)w(y')q(y)+w(y)w(y')q(y')].
\end{equation}
If in addition, the initial kernel $\mu_0$ satisfies $\sup_{t\geq 0}\|\langle w^2,Q_t(\mu_0) \rangle\|_\infty<\infty$ and $\|\langle y,\mu_0\rangle\|_1<\infty$, then there exists $T>0$ such that there exists a unique solution to our PDEs up to time $T$ satisfying
\begin{displaymath}
\sup_{t\leq T}\|\langle w^2,\mu_{t} \rangle\|_\infty<\infty.\end{displaymath}
Moreover, if $K(y,y')\leq w(y)v(y')+w(y')v(y)$ for some $v$ such that $wvq$ is sublinear, then there exists a unique global solution for our PDEs satisfying
\begin{displaymath}
\sup_{t\geq 0}\|\langle w^2,\mu_{t} \rangle\|_\infty<\infty.\end{displaymath}
\end{thm}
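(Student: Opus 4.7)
The plan is to deduce Theorem \ref{driftcase2} from Theorem \ref{driftcase} by verifying the hypotheses of the latter (formulated in terms of $P_t$ and $p$) from the hypotheses of the former (formulated in terms of $Q_t$ and $q$). The whole point of introducing $q$ is that, although we cannot in general compute $p$ when a drift is present, the process defining $q$ consists of coordinate-wise independent one-dimensional diffusions mean-reverting to the target $x$, so $q$ has an explicit product form that one can estimate directly.

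The first step is a pointwise comparison estimate of the form $p^{t,x',x}(y) \leq C\, q^{t,x',x}(y)$, valid for every admissible drift with $|b^i_t(x,y)| \leq B(y)$ and some absolute constant $C$. The heuristic is that among all drifts with coordinate-wise bound $B(y)$, the drift of the $q$-process, which always points toward $x$, concentrates the greatest mass at $x$. I would establish this either by a Girsanov argument, writing $p$ as an expectation under the $q$-law of an exponential martingale whose integrand is bounded thanks to the saturating drift $\mathbf{B}$, or alternatively by a comparison principle applied coordinate by coordinate to the Fokker--Planck equations solved by $p$ and $q$. Integrating against $\mu_0$ then upgrades this pointwise comparison to $P_t\mu_0(x,dy) \leq C\, Q_t\mu_0(x,dy)$ for all $x,y,t$.

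From here, the bound $\sup_{t \geq 0}\|\langle w^2, P_t\mu_0\rangle\|_\infty \leq C\sup_{t \geq 0}\|\langle w^2, Q_t\mu_0\rangle\|_\infty < \infty$ is immediate, supplying one of the two hypotheses of Theorem \ref{driftcase}. For the second hypothesis we must transfer (\ref{drifteqn}) into the inequality (\ref{ssineq}) used to drive the Picard iteration in the proofs of Theorems \ref{ssthm2} and \ref{ssthm3}. A straight substitution is not available, because $p \leq Cq$ has the wrong sign when placed on the right-hand side of (\ref{ssineq}). Instead I would revisit the Picard iteration and note that the inequality is only ever invoked after an inner spatial integration against $p^{t-s,x',x}(y+y')$; at that point one can first upper-bound $p(y+y') \leq C\,q(y+y')$, then apply (\ref{drifteqn}) to the resulting $q$-expression to produce right-hand terms of the form $w(y)w(y')q(y)$ and $w(y)w(y')q(y')$, which after re-integration are controlled by $\sup_s\|\langle w^2, Q_{t-s}\mu_s\rangle\|_\infty$ together with $\|\langle w,\mu_s\rangle\|_1$ and the explicit $q$-factors. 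The bookkeeping is tedious but, because the comparison $p \leq Cq$ couples the two kernels on the right scale, entirely parallel to the unperturbed proof.

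The global statement follows by the same device: the hypothesis ``$K(y,y')\leq w(y)v(y')+w(y')v(y)$ with $wvq$ sublinear'' implies, via $p\leq Cq$, the corresponding hypothesis with $wvp$ sublinear, so that Theorem \ref{ssthm3}(a) in the drifted form given by Theorem \ref{driftcase} applies verbatim. The main obstacle is the comparison $p\leq Cq$ and, most of all, the reorganisation of the Picard estimates so that (\ref{drifteqn}) for $q$ does the work formerly done by (\ref{ssineq}) for $p$; once that bookkeeping is in place, the rest is a line-by-line replay of the arguments for Theorems \ref{ssthm2} and \ref{ssthm3}.
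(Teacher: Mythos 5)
Your reduction of Theorem \ref{driftcase2} to Theorem \ref{driftcase} via the pointwise comparison $p^{t,x',x}(y)\leq q^{t,x',x}(y)$ (which is correct, with constant $1$, by the optimality part of Theorem \ref{drift}) supplies the hypothesis $\sup_{t>0}\|\langle w^2,P_t\mu_0\rangle\|_\infty<\infty$, but you cannot use it to transfer (\ref{drifteqn}) into (\ref{ssineq}), even after "reorganising" the Picard estimate. Decomposing
\begin{displaymath}
\frac{y}{y+y'}w^2(y+y')p(y+y')-w^2(y)p(y)
\leq\left[\frac{y}{y+y'}w^2(y+y')q(y+y')-w^2(y)q(y)\right]+w^2(y)\bigl[q(y)-p(y)\bigr],
\end{displaymath}
the bracket is controlled by (\ref{drifteqn}), but the remainder $w^2(y)[q(y)-p(y)]\geq 0$ is not small, carries no factor of $w(y')$, and therefore does not fit the Gronwall scheme. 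Dropping the term $-w^2(y)p(y)$ instead destroys exactly the cancellation that makes the iteration close. The paper in fact flags this: it states explicitly that the hypotheses of Theorem \ref{driftcase} are usually unverifiable in the drift setting, which is the whole reason Theorem \ref{driftcase2} is formulated with $Q$ rather than $P$.

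The real obstacle, which your proposal does not identify, is that $Q$ is not a semigroup: the saturating drift in $q^{t,x',x}(y)$ points toward the \emph{target} $x$, so $Q_sQ_t\neq Q_{s+t}$, and the identity $P_sq_t=P_{s+t}q_0+\int_0^tP_{s+t-r}K^\nu_r(q_r)\,dr$ that drives the Brownian iteration has no direct $Q$-analogue. The paper's proof replaces it by the one-sided inequality $Q_sq_t\leq Q_{s+t}q_0+\int_0^tQ_{s+t-r}K^\nu_r(q_r)\,dr$, derived from the optimality $Q_{s-h}P_h\leq Q_s$ (Theorem \ref{drift}) together with the H\"older continuity estimate on $\rho$; after that the whole Gronwall argument is carried out with $Q$ and $q$ throughout and (\ref{drifteqn}) is used as is. Establishing that one-sided inequality is the genuine content of the existence proof, and it is the step missing from your proposal.
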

We see that this theorem is similar with Theorem \ref{ssthm2} and \ref{ssthm3}, we just replace $p$ by $q$. We then investigate the properties of $q$.
\begin{lem}\label{driftlem}
If $B/\sqrt{a}$ is non-increasing and $a$ is non-increasing. Then for $y>y'>0$, we have
\begin{displaymath}
q(y)/q(y')\leq [a(y)/a(y')]^{-\frac{d}{2}}.
\end{displaymath}
If $a,B$ are both non-increasing and $\frac{B}{\sqrt{a}}$ is non-decreasing, we have for $y>y'>0$,
\begin{displaymath}
q(y)/q(y')\leq (\frac{B(y)/a(y)}{B(y')/a(y')})^d.
\end{displaymath}
\end{lem}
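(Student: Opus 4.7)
The plan is to exploit the separability of the SDE to reduce the $d$-dimensional estimate to a one-dimensional one, then prove the 1D bounds by monotonicity of a standardized density.

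First I would observe that because the driving Brownian motions are independent and the drift $\mathbf{B}^i(X_t,y)=B(y)\,\mathrm{sgn}(x^i - X^i_t)$ depends only on the $i$-th coordinate, the process $X$ decouples into $d$ independent one-dimensional bang-bang diffusions, one toward each $x^i$. Hence $q^{t,x',x}(y)=\prod_{i=1}^d q^{(1)}(y,t,x'^i,x^i)$, and both claimed bounds factor across coordinates. Rescaling each coordinate by $\sqrt{a(y)}$ converts the 1D process into a standardized one with unit diffusivity and drift $-c(y)\,\mathrm{sgn}(\cdot)$, where $c(y)=B(y)/\sqrt{a(y)}$. Writing $\phi(t,u,c)$ for the density at $0$ of this standardized process started from $u$, we obtain
\[
q^{(1)}(y,t,x'^i,x^i)=\frac{1}{\sqrt{a(y)}}\,\phi\!\left(t,(x'^i-x^i)/\sqrt{a(y)},\,c(y)\right).
\]
A direct algebraic factorization then shows that (a) reduces, per coordinate, to $\phi(t,z/\sqrt{a(y)},c(y))\leq \phi(t,z/\sqrt{a(y')},c(y'))$, and (b) reduces to $\phi(t,z/\sqrt{a(y)},c(y))\leq (c(y)/c(y'))\,\phi(t,z/\sqrt{a(y')},c(y'))$, using the identity $(B(y)/a(y))/(B(y')/a(y'))=(c(y)/c(y'))\sqrt{a(y')/a(y)}$ to convert the target bound in (b).

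The core step is to prove three monotonicity properties of $\phi$: (M1) $\phi(t,u,c)$ is non-increasing in $|u|$; (M2) $\phi(t,u,c)$ is non-decreasing in $c$; (M3) $\phi(t,u,c)/c$ is non-increasing in $c$. Once these are available, the per-coordinate 1D reductions follow cleanly: under the hypotheses of (a), $|z|/\sqrt{a(y)}\geq |z|/\sqrt{a(y')}$ (since $a$ is non-increasing) and $c(y)\leq c(y')$ (since $B/\sqrt a$ is non-increasing), so (M1) followed by (M2) gives the bound; under the hypotheses of (b), $c(y)\geq c(y')$, and (M1) followed by (M3) gives it. For (M1)--(M3) I would use the Girsanov/Tanaka representation
\[
\phi(t,u,c) = \frac{1}{\sqrt{2\pi t}}\,\exp\!\left(c|u| - \frac{c^2 t}{2} - \frac{u^2}{2t}\right)\,\mathbb{E}^{\mathrm{BB}(0,u,t)}\!\left[e^{cL^0_t}\right],
\]
where $L^0_t$ is the local time at $0$ of a standard Brownian bridge from $0$ to $u$ of length $t$. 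Property (M1) follows from the stochastic decrease of $L^0_t$ in the bridge endpoint combined with the behaviour of the explicit Gaussian prefactor. Property (M2) is obtained by differentiating in $c$ and applying Tanaka's formula under the conditioned measure. Property (M3) is verified directly at $u=0$: reducing to reflected Brownian motion with drift $-c$ and computing the density at $0^+$ using the drifted first-passage formulas yields the explicit expression $\phi(t,0,c)=c\,\Phi(c\sqrt t)+(2\pi t)^{-1/2}e^{-c^2t/2}$, from which $\partial_c(\phi(t,0,c)/c)=-e^{-c^2t/2}/(c^2\sqrt{2\pi t})\leq 0$ is immediate. The general $u\neq 0$ case is then handled by the first-passage decomposition $\phi(t,u,c)=\int_0^t h_c(s,u)\,\phi(t-s,0,c)\,ds$ with the inverse-Gaussian hitting kernel $h_c(s,u)=|u|(2\pi s^3)^{-1/2}\exp(-(|u|-cs)^2/(2s))$.

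The main obstacle will be verifying (M2) and (M3) rigorously for $u\neq 0$, where both the Brownian-bridge law and the hitting-time kernel depend jointly and nontrivially on $u$ and $c$; the explicit representations above, combined with classical Ray--Knight type results for Brownian-bridge local time, should make the required comparisons tractable, but this is by far the most technical step.
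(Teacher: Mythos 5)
The coordinate factorization and the reduction to a one-dimensional density $\phi(t,u,c)$ are sound, and the explicit form $\phi(t,u,c)=\frac{1}{\sqrt{2\pi t}}\int_{|u|/\sqrt t}^\infty z\,e^{-(z-c\sqrt t)^2/2}\,dz$ is exactly the formula the paper works from, via Theorem \ref{drift}. For the first bound, your (M1)$+$(M2) argument is essentially the paper's: $a$ non-increasing pushes the lower limit up, $B/\sqrt a$ non-increasing pushes the drift shift down, and both monotonicities are read off the explicit integral. So far so good.

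For the second bound, however, there is a genuine gap: property (M3) is false for $u\neq 0$. Your $u=0$ computation is correct, but set $v=|u|/\sqrt t$, $\mu=c\sqrt t$ and $I_k=\int_v^\infty z^ke^{-(z-\mu)^2/2}dz$; then
\begin{displaymath}
\mu^2\,\partial_\mu\left(\frac{1}{\mu}\int_v^\infty ze^{-(z-\mu)^2/2}dz\right)=\mu I_2-(1+\mu^2)I_1,
\end{displaymath}
so (M3) is equivalent to $I_2/I_1\leq \mu+\mu^{-1}$. As $v\to\infty$ with $\mu$ fixed, $I_2/I_1\sim v\to\infty$, so the inequality fails; already at $\mu=1$, $v=3$ one computes $I_2/I_1\approx 3.4>2$. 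This is not merely a technical obstacle awaiting a Ray--Knight style argument -- the stated monotonicity simply does not hold, and the first-passage decomposition cannot rescue it because $h_c(s,u)$ depends on $c$ exponentially. The paper does not attempt to isolate a monotonicity in $c$ alone; it bounds the full ratio of one-dimensional integrals for the pair $(y,y')$ directly, using that $z\mapsto\frac{z+\beta(y)}{z+\beta(y')}$ is non-increasing on $z\geq 0$ together with a case split on the sign of the effective starting point $|x^i-x'^i|/\sqrt{a(y)t}-B(y)\sqrt{t/a(y)}$. The slack coming from the change of lower integration limit (which you spend separately in step (M1)) is exactly what compensates for the failure of a pure-in-$c$ monotonicity, so your sequential (M1)-then-(M3) decomposition discards information the argument actually needs; a joint comparison in $(u,c)$ is required.
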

This lemma can be viewed as an analogy of the following statement in the non-drift case:

For $y>y'>0$, if $a$ is non-increasing, then $p(y)/p(y')\leq [a(y)/a(y')]^{-d/2}$.
This was the only property of $p$ we have used to show Corollary \ref{cor1} and Corollary \ref{cor2}. Therefore, we can use the same argument to obtain the following result.
\begin{cor}
Assume $B/\sqrt{a}$ is non-increasing and $a$ is non-increasing. If $w(y)=c_1y^u$ and $a(y)=c_2y^{-v}$ with $0<u\leq 1$ and $c_1,c_2,v>0$, then (\ref{drifteqn}) is satisfied. If $wva^{-d/2}$ is sublinear, then $wvq$ is also sublinear.

Assume now instead $a,B$ are both non-increasing and $\frac{B}{\sqrt{a}}$ is non-decreasing. If $w(y)=c_1y^u$ and $\frac{B(y)}{a(y)}=c_2y^{-v}$ with $0<u\leq 1$ and $c_1,c_2,v>0$, then (\ref{drifteqn}) is satisfied. If $wvB^{d}a^{-d}$ is sublinear, then $wvq$ is also sublinear.
\end{cor}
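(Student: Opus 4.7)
The plan is to piggyback on the computation that follows Theorem~\ref{ssthm2}: that computation verified the no-drift analogue of the corollary using only the ratio bound $p(y)/p(y')\leq [a(y)/a(y')]^{-d/2}$ for $y>y'$, and Lemma~\ref{driftlem} supplies the corresponding ratio bound for $q$ in each of the two monotonicity regimes. I would therefore recycle the argument in both parts, adjusting one exponent for part~(b), and handle the sublinearity claims by a reference-point bound.

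For part~(a), Lemma~\ref{driftlem} gives $q(y)/q(y')\leq [a(y)/a(y')]^{-d/2}$, and with $a(y)=c_2 y^{-v}$ this becomes $q(y)/q(y+y')\geq (y/(y+y'))^{vd/2}$, exactly the inequality used by the author. Dividing (\ref{drifteqn}) through by $q(y+y')$ and substituting reduces matters to the same polynomial inequality in $y,y'$ that the author verifies after Theorem~\ref{ssthm2}. For sublinearity of $wvq$, I would fix a reference mass $y_0$ and invoke the ratio bound to get $q(y)\leq [a(y)/a(y_0)]^{-d/2}q(y_0)$ on $y>y_0$, so that $w(y)v(y)q(y)$ is dominated by a constant multiple of $w(y)v(y)a(y)^{-d/2}$ for large $y$, and sublinearity of $wva^{-d/2}$ transfers to $wvq$.

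For part~(b), Lemma~\ref{driftlem} instead gives $q(y)/q(y')\leq ((B/a)(y)/(B/a)(y'))^d$, and with $B(y)/a(y)=c_2 y^{-v}$ this becomes $q(y)/q(y+y')\geq ((y+y')/y)^{vd}\geq 1$; the ratio-bound exponent is now $vd$ rather than $vd/2$. Running the same reduction after (\ref{drifteqn}) and using homogeneity to set $y'=1$, I am left with an inequality of the form
\begin{displaymath}
y(y+1)^{2u-1-vd}-y^{2u-vd}\leq C\left(y^{u-vd}+y^u\right),
\end{displaymath}
to be verified for some $C$. A routine two-regime asymptotic check gives the result: on $y\geq 1$ the hypothesis $u\leq 1$ makes the two leading-order terms on the left cancel so that the remainder is controlled by $y^u$; on $0<y\leq 1$ a short case analysis according to the sign of $u-vd$ handles each sub-case. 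The sublinearity statement is handled identically, with $(B/a)^d$ in place of $a^{-d/2}$ in the reference-point bound.

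I do not expect a substantive obstacle. As the paragraph immediately preceding the corollary emphasises, the only input of $p$ used in the proofs of Corollaries~\ref{cor1} and~\ref{cor2} is the ratio bound that Lemma~\ref{driftlem} now supplies for $q$. The only point requiring fresh attention is verifying the reduced polynomial inequality of part~(b) with exponent $vd$ in place of $vd/2$, which is a short asymptotic check relying on $u\leq 1$.
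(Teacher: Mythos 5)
Your proposal is correct and follows essentially the same route the paper intends: the paper states the corollary without a displayed proof, noting only that the ratio bound $p(y)/p(y')\leq [a(y)/a(y')]^{-d/2}$ was the sole property of $p$ used to verify (\ref{ssineq}) for power-law $w,a$, and that Lemma~\ref{driftlem} supplies the analogous bound for $q$. Your reduction of (\ref{drifteqn}) for part~(b) to $y(y+1)^{2u-1-vd}-y^{2u-vd}\leq C(y^{u-vd}+y^u)$ is the right computation (after homogenising to $y'=1$ and multiplying by $(y/(y+1))^{vd}$), and your two-regime check is sound: for $y\geq 1$ the ratio $[y(y+1)^{2u-1-vd}-y^{2u-vd}]/y^u$ is continuous and tends to $0$ since $u\leq 1$ forces $u-vd-1<0$, while for $0<y\leq 1$ the left-hand side is bounded above (the subtracted term helps and $y^{1-u+vd}$ has positive exponent) while $y^{u-vd}\geq \min(1,y)$. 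The reference-point argument for sublinearity of $wvq$ is also what the paper has in mind; you supply slightly more explicit detail than the paper itself, which simply invokes ``the same argument.''
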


\section{Estimate for diffusion particles}
In \cite{bmk}, Norris proved Theorem \ref{diff} in the case $X^i$ are Brownian motions. Intuitively, the coagulation kernel $K(z)$ can be viewed as a quantity measuring the probability of collision happening provided the two particles are close to $z$ at time $s$. Also it is unlikely for two particles to collide at $z$ and at time $t$ unless they are both close to $z$ at a time $s$ slightly before $t$. Now, if the two particles are near $z$ at time $s$, then we can approximate their behaviour during $(s,t)$ as Brownian motions with diffusivities $a_i(x)$. In this section, we will use this idea to prove Theorem $\ref{diff}$.
\subsection{A formal proof of Theorem \ref{diff}}
We will now give a formal proof showing
\begin{displaymath}
\left|N\mathbb{E}(g(T,X(T))\mathbf{1}_{T<R})-\int_0^R\int_{\mathbb{R}^d} K(z)p_1(0,x_1;s,z)p_2(0,x_2;s,z)g(s,z)dzds\right|\rightarrow 0.
\end{displaymath}
We define for each $s\in(0,\infty)$ and $z\in\mathbb{R}^d$ the process 
\begin{displaymath}
 M_t=\mathbf{1}_{t<s}p_1(t,X^1_{t\wedge T};s,z)p_2(t,X_{t\wedge T}^2;s,z),\:t\geq 0.
\end{displaymath}
Recall $x_1\neq x_2$ and thus $T>0$ almost surely. Moreover, $M$ is continuous almost surely, $(M_t)_{t<s}$ is a martingale, $M_t=0$ for all $t\geq s$ and we can show that $M_t$ is uniformly bounded up to $T$. Hence, by optional stopping and bounded convergence theorem,
\begin{displaymath}
 M_0=\mathbb{E}[M_T].
\end{displaymath}
On multiplying by $g(s,z)K(z)$ and integrating over $(0,R)\times\mathbb{R}^d$ we obtain
\begin{align*}
&\int_0^R\int_{\mathbb{R}^d}p_1(0,x_1;s,z)p_2(0,x_2;s,z)g(s,z)K(z)dzds\\
&=\mathbb{E}[\int_T^R\int_{\mathbb{R}^d}p_1(T,X^1_T;s,z)p_2(T,X_T^2;s,z)g(s,z)K(z)dzds].\numberthis\label{eqn1}
\end{align*} 
The main part of the proof will be on estimating the right hand side of the above equation. When $r_N$ is small, the probability $T<R$ will also be small. Therefore, we can in fact ignore $p_1(T,X^1_T;s,z)p_2(T,X_T^2;s,z)g(s,z)K(z)$ unless it is large. We note that $|X^1_T-X^2_T|$ is small, and thus $p_1(T,X^1_T;s,z)p_2(T,X_T^2;s,z)$ can be large when $z$ is close to $X^1_T$ and $X^2_T$ and $s$ is slightly larger than $T$. Actually, we can ignore the contribution when $s$ is not sufficiently close to $T$ or $z$ is not sufficiently close to $X(T)$. By uniform continuity of $g$, we can simply estimate the expectation in equation (\ref{eqn1}) by 
\begin{displaymath}
\mathbb{E}[g(T,X(T))\int_T^R\int_{\mathbb{R}^d}p_1(T,X^1_T;s,z)p_2(T,X_T^2;s,z)K(z)dzds]
\end{displaymath}
and it remains for us to estimate 
\begin{displaymath}
\int_T^R\int_{\mathbb{R}^d}p_1(T,X^1_T;s,z)p_2(T,X_T^2;s,z)K(z)dzds
\end{displaymath}
when $T<R$. Again, we only need to care about the contribution when $s$ is close to $T$. We know that in a small time interval, the contribution of the drift to the motion of $X^i$ is relatively small in comparison to the contribution of the diffusion. Let $p'_i$ be the transition densities of the motion
\begin{displaymath}
 dX'^i_t=\sqrt{a_i(X'^i_t)}dB^i_t.
\end{displaymath}
We can actually approximate $p_i$ by $p'_i$. We now condition on $T<s$ and set $X'^i_T=X^i_T$. We have by Dubins Schwarz theorem, 
\begin{displaymath}
X'^1_s-X'^2_s=W_{A(s)},
\end{displaymath}
where $W$ is a Brownian motion with diffusivity $1$ and $W_0=X^1_T-X^2_T$ and $A_i(s)=\int_T^s a_1(X'^1_r)+a_2(X'^2_r)dr$. Let $q$ denote the transition density of $W$ and $V(h)$ the volume of a ball of radius $h$ in $\mathbb{R}^d$, we could have
\begin{align*}
&\int_T^\infty\int_{\mathbb{R}^d}p'_1(T,X^1_T;s,z)p'_2(T,X_T^2;s,z)K(z)dzds\\&=c_dr^{d-2}\int_T^\infty\int_{\mathbb{R}^d}p'_1(T,X^1_T;s,z)p'_2(T,X_T^2;s,z)(a_1(z)+a_2(z))dzds\\
&=c_dr^{d-2}\int_T^\infty\int_{\mathbb{R}^d}\lim_{h\rightarrow 0}\mathbb{E}[ \frac{\mathbf{1}_{|X'^1_s-z|<h}}{V(h)}p'_2(T,X_T^2;s,z)((a_1(z)+a_2(z))]dzds\\
&=c_dr^{d-2}\int_T^\infty\lim_{h\rightarrow 0} \mathbb{E}[\frac{\mathbf{1}_{|W_s|<h}}{V(h)}(a_1(X'^1_s)+a_2(X'^2_s))]ds\\
&=c_dr^{d-2}\int_T^\infty\lim_{h\rightarrow 0} \mathbb{E}[\frac{\mathbf{1}_{|W_{A(s)}|<h}}{V(h)}]dA(s)\\
&=c_dr^{d-2}\int_T^\infty\lim_{h\rightarrow 0} \mathbb{E}[\frac{\mathbf{1}_{|W_s|<h}}{V(h)}]ds\\
&=c_dr^{d-2}\int_T^\infty q(T,X^1_T-X_T^2;s,0)ds.\\
\end{align*}
Since $q$ is the transition density of a standard Brownian motion and $|X^1_T-X^2_T|=rN^{-1/(d-2)}$, we have
\begin{displaymath}
\int_T^\infty q(T,X^1_T-X_T^2;s,0)ds=\int_0^\infty\frac{1}{(2\pi t)^{\frac{d}{2}}}e^{\frac{-r^2N^{-2/(d-2)}}{2t}}dt.
\end{displaymath}
We now make the substitution $u=\frac{t}{r^2N^{-2/(d-2)}}$ and recall that
\begin{displaymath}
\frac{1}{c_d}=\int_0^\infty\frac{1}{(2\pi t)^{\frac{d}{2}}}e^{-\frac{1}{2t}}dt
\end{displaymath}
to obtain
\begin{align*}
&\int_0^\infty\frac{1}{(2\pi t)^{\frac{d}{2}}}e^{\frac{-r^2N^{-2/(d-2)}}{2t}}dt\\
&=(r^2N^{-2/(d-2)})^{\frac{2-d}{2}}\int_0^\infty\frac{1}{(2\pi u)^{\frac{d}{2}}}e^{-\frac{1}{2u}}du\\
&=\frac{1}{c_d}Nr^{2-d}.
\end{align*}
Hence, we have
\begin{displaymath}
\int_T^R\int_{\mathbb{R}^d}p_1(T,X^1_T;s,z)p_2(T,X_T^2;s,z)K(z)dzds=N.
\end{displaymath}
So far, we took integral from $T$ to $\infty$, but as we have discussed earlier, we can ignore the contribution when $s$ is not close to $T$ anyway. Therefore, the above calculation concludes that when $T<R$,
\begin{displaymath}
\int_T^R\int_{\mathbb{R}^d}p_1(T,X^1_T;s,z)p_2(T,X_T^2;s,z)K(z)dzds=N+o(N)
\end{displaymath}
as desired.
\subsection{Estimates on transition densities}
To make the proof rigorous, we will first review a number of estimates we can get regarding to the transition densities $p_i$, which will be useful for us to prove Theorem $\ref{diff}$. To start with, we want to have some idea about the behaviour of $p_i$ and we will use the main result in \cite{Davies}. They showed the following theorem.
\begin{thm}\label{aronson}
Using same notation as in Theorem \ref{diff}, there exists a constant $C$ depending only on $d$ and $R$ such that for all $x,y\in \mathbb{R}^d$,
\begin{displaymath}
 C^{-1}t^{-d/2}\exp\{-C|y-x|^2/t\}e^{-Ct}\leq p_i(0,x;t,y)\leq Ct^{-d/2}\exp\{-|y-x|^2/Ct\}e^{Ct}.
\end{displaymath}
Moreover, $p_i(0,x;t,y)$ is locally H\"older continuous in $t>0$ and $y$.
\end{thm}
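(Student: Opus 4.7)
The plan is to combine Aronson-type Gaussian bounds for driftless uniformly elliptic diffusions with a Girsanov reduction that removes the drift $b_i$. Write $\tilde L_i=\tfrac12 a_i(x)\Delta$ and $L_i=\tilde L_i+b_i(x)\cdot\nabla$; my strategy has three steps.

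\emph{Step 1 (driftless bound).} For the driftless generator $\tilde L_i$ with $R^{-1}\leq a_i\leq R$, the transition density $\tilde p_i$ satisfies the classical two-sided Aronson bound
\begin{displaymath}
\tilde C^{-1}t^{-d/2}\exp\bigl(-\tilde C|y-x|^2/t\bigr)\leq\tilde p_i(0,x;t,y)\leq\tilde C t^{-d/2}\exp\bigl(-|y-x|^2/(\tilde C t)\bigr)
\end{displaymath}
for $\tilde C=\tilde C(d,R)$. To obtain constants depending only on ellipticity (and not on the H\"older norm of $a_i$), one rewrites $\tilde L_i$ in divergence form, absorbing $\tfrac12\nabla a_i$ into a drift that is again handled by Girsanov, and applies Davies' weighted $L^2$-norm method (upper bound) together with either the Fabes--Stroock Nash-type argument or a chaining of Krylov--Safonov parabolic Harnack inequalities (lower bound). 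Local H\"older continuity of $\tilde p_i$ in $(t,y)$ is the content of De Giorgi--Nash--Moser or its non-divergence-form Krylov--Safonov analogue.

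\emph{Step 2 (Girsanov reduction).} Set $\beta_i(x)=b_i(x)/\sqrt{a_i(x)}$; since $|b_i|\leq R$ and $a_i\geq R^{-1}$ we have $|\beta_i|\leq R^{3/2}$, and by Novikov the exponential
\begin{displaymath}
Z_t=\exp\left(-\int_0^t\beta_i(X^i_s)\cdot dB^i_s-\tfrac12\int_0^t|\beta_i(X^i_s)|^2ds\right)
\end{displaymath}
is a true $\mathbb{P}$-martingale on $[0,R]$, and under $d\mathbb{Q}=Z_R d\mathbb{P}$ the process $X^i$ is the driftless diffusion of Step~1. Hence
\begin{displaymath}
p_i(0,x;t,y)=\mathbb{E}^{\mathbb{Q}}\!\left[Z_t^{-1}\mid X^i_t=y\right]\cdot\tilde p_i(0,x;t,y).
\end{displaymath}
Rewriting $Z_t^{-1}=\exp\bigl(\int_0^t\beta_i\,d\tilde B-\tfrac12\int_0^t|\beta_i|^2 ds\bigr)$ in terms of the $\mathbb{Q}$-Brownian motion $\tilde B$, the integrand is bounded uniformly in $(d,R)$, so a Cauchy--Schwarz estimate against an auxiliary exponential martingale of integrand $2\beta_i$ yields two-sided bounds $e^{-Ct}\leq\mathbb{E}^{\mathbb{Q}}[Z_t^{-1}\mid X^i_t=y]\leq e^{Ct}$ with $C=C(d,R)$. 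Multiplying into the bound from Step~1 gives the stated estimate; the H\"older continuity of $p_i$ in $(t,y)$ transfers from that of $\tilde p_i$ together with joint continuity of the Girsanov factor.

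The main obstacle is the lower bound on the conditional Girsanov factor $\mathbb{E}^{\mathbb{Q}}[Z_t^{-1}\mid X^i_t=y]$: Cauchy--Schwarz most naturally gives only an upper bound on a second moment, not a positive lower bound on a first moment. The cleanest workaround, and the one I believe is implicit in the cited reference, is to bypass the conditional expectation and work directly at the PDE level, treating $b_i\cdot\nabla$ as a lower-order perturbation of $\tilde L_i$: Davies' weighted $L^2$ method accommodates bounded drifts, giving the upper bound with an $e^{Ct}$ prefactor, and the Krylov--Safonov parabolic Harnack inequality is robust under bounded drifts, giving the lower bound with an $e^{-Ct}$ prefactor after chaining along a tube from $x$ to $y$. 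Both steps yield constants depending only on $d$ and $R$, and the restriction to $t\leq R$ renders the exponential prefactors $e^{\pm Ct}$ harmless.
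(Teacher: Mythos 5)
The paper gives no proof of this statement at all: it is cited verbatim as ``the main result in \cite{Davies}'' and used as a black box. So your proposal cannot be compared against a paper argument; it can only be judged on its own merits, and there it has one genuine gap and one well-spotted obstacle that you handle correctly.

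The gap is in Step~1. You propose obtaining constants depending only on $(d,R)$ by rewriting $\tilde L_i=\tfrac12 a_i\Delta$ in divergence form and ``absorbing $\tfrac12\nabla a_i$ into a drift.'' But under the hypotheses of Theorem~\ref{diff}, $a_i$ is only assumed bounded and H\"older continuous; $\nabla a_i$ need not exist, and even when it does it is not controlled by $R$. So that reduction is not available, and if it were the resulting drift would not be bounded uniformly in $(d,R)$, defeating the point. The correct route to $(d,R)$-only constants for the non-divergence-form operator $\tfrac12 a_i\Delta + b_i\cdot\nabla$ is the one you list as an afterthought: the Krylov--Safonov parabolic Harnack inequality holds for non-divergence-form uniformly parabolic operators with bounded measurable drift, with constants depending only on dimension, the ellipticity ratio, and the drift bound; chaining it along a tube from $x$ to $y$ gives the Gaussian lower bound, while the upper bound comes from a Nash/Moser iteration or a direct comparison with the maximal-drift reference density of Theorem~\ref{drift}. (Note also that the Davies reference the paper cites treats symmetric, divergence-form generators, so it does not literally cover the case with drift either; the standard references for the non-divergence, bounded-drift case are Krylov--Safonov and Porper--Eidel'man, or one may argue as the paper's own Theorem~\ref{drift} does, by a Girsanov comparison against the extremal bounded-drift kernel.)

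Your Step~2 issue, that Cauchy--Schwarz gives an upper but not a lower bound on $\mathbb{E}^{\mathbb{Q}}[Z_t^{-1}\mid X^i_t=y]$, is real and you resolve it the right way, by working at the level of the generator rather than through the conditional expectation. Once Step~1 is run via Krylov--Safonov rather than the divergence-form rewrite, the overall argument closes. The H\"older continuity claim in $(t,y)$ is also a Krylov--Safonov consequence and transfers under the bounded-drift perturbation as you say.
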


Next, we note that it is intuitive to believe that $\int_{t_0}^{t_1}\int_{\mathbb{R}^d}p_1(0,x_1;s,z)p_2(0,x_2;s,z)dzds$ measures the expected amount of time when the two particles are ``close'', and more precisely, we would expect
\begin{displaymath}
\int_{t_0}^{t_1}\int_{\mathbb{R}^d}p_1(0,x_1;s,z)p_2(0,x_2;s,z)dzds=\lim_{h\rightarrow 0}V(h)^{-1}\mathbb{E}\big[\int_{t_0}^{t_1}\mathbf{1}_{|X^1_s-X^2_s|<h}ds\big],
\end{displaymath}
where $V(h)$ denotes the volume of the $d$-dimensional sphere with radius $h$. Actually, using the above theorem, we can prove the following more general result.
\begin{cor}\label{vol}
 Let $X=X^1-X^2$, for $0\leq t_0<t_1$ and $x_1\neq x_2$, we have for all bounded uniformly continuous function $f$,
\begin{displaymath} \int_{t_0}^{t_1}\int_{\mathbb{R}^d}p_1(0,x_1;s,z)p_2(0,x_2;s,z)f(z)dzds=\lim_{h\rightarrow 0}V(h)^{-1}\mathbb{E}\big[\int_{t_0}^{t_1}\mathbf{1}_{|X_s|<h}f(X_s^2)ds\big]. 
\end{displaymath}
\end{cor}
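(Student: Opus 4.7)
The plan is to rewrite the right-hand side as the average of a continuous function via Fubini, and then identify that value with the left-hand side. By independence of $X^1$ and $X^2$ and Fubini's theorem, the right-hand side equals
\begin{displaymath}
\lim_{h\to 0}V(h)^{-1}\int_{|u|<h}H(u)\,du,\qquad H(u):=\int_{t_0}^{t_1}\!\int_{\mathbb{R}^d}f(z)p_1(0,x_1;s,z+u)p_2(0,x_2;s,z)\,dz\,ds,
\end{displaymath}
after substituting $u=z_1-z_2$ in the joint density $p_1(0,x_1;s,z_1)p_2(0,x_2;s,z_2)$ and interchanging integrals. By the Lebesgue differentiation theorem, provided $H$ is continuous at $u=0$, these ball averages converge to $H(0)$, which is exactly the left-hand side of the claimed identity.

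Continuity of $H$ at $u=0$ is the main work. The pointwise convergence $p_1(0,x_1;s,z+u)\to p_1(0,x_1;s,z)$ for each $s>0,z\in\mathbb{R}^d$ follows from the local H\"older continuity of $p_1$ in Theorem \ref{aronson}. To justify dominated convergence in $(s,z)$, I would use the Gaussian upper bound in that same theorem together with the parallelogram-type inequality $|a|^2+|b|^2\geq \tfrac12|a-b|^2$, applied with $a=z+u-x_1$ and $b=z-x_2$, to obtain the split
\begin{displaymath}
e^{-|z+u-x_1|^2/Cs}e^{-|z-x_2|^2/Cs}\leq e^{-|z+u-x_1|^2/2Cs}e^{-|z-x_2|^2/2Cs}e^{-|x_1-x_2-u|^2/4Cs}.
\end{displaymath}
Restricting to $|u|\leq \tfrac12|x_1-x_2|$ ensures the last factor is at most $e^{-|x_1-x_2|^2/16Cs}$. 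Multiplying by $\|f\|$ and the Aronson prefactors and integrating the remaining Gaussian in $z$ gives a bound of the form $C's^{-d/2}e^{2Cs}e^{-c/s}$, with $c>0$ depending on $|x_1-x_2|$, which is integrable on $[0,t_1]$. Dominated convergence then yields $H(u)\to H(0)$ as $u\to 0$.

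The main obstacle, and the point where the hypothesis $x_1\neq x_2$ becomes essential, is getting an integrable majorant all the way down to $s=0$ in the case $t_0=0$: the $s^{-d}$ singularity of the product $p_1p_2$ is controlled only by the exponential suppression $e^{-c/s}$ produced by the particles starting apart, which in turn requires one to spend only half of each Aronson exponential on the Gaussian integral in $z$, keeping the other half to generate the crucial separation factor. Once continuity of $H$ at $0$ is established, Lebesgue differentiation closes the argument.
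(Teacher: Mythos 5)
Your proposal is correct, and it takes a genuinely different route from the paper's proof, though both lean on the same key inputs (the Aronson-type bounds and local H\"older continuity from Theorem \ref{aronson}, and the hypothesis $x_1\neq x_2$). The paper works directly with the triple integral $V(h)^{-1}\int\int\int_{|y-z|\leq h}p_1 p_2 f$, truncates to a compact exhaustion $S_n$, proves uniform convergence of the mollified kernel on $S_n$, and then estimates the tail over $S_n^c$ uniformly in $h$ to justify swapping the two limits $h\to 0$ and $n\to\infty$. You instead push the ball average onto the difference variable $u=y-z$ via Fubini, reduce the claim to continuity of the single function $H(u)$ at $u=0$, and establish that continuity by dominated convergence; the parallelogram-type split of the two Aronson exponentials cleanly produces the $e^{-c/s}$ suppression (with $c>0$ by $x_1\neq x_2$) that makes the majorant integrable down to $s=0$ when $t_0=0$, which is exactly the role played by the paper's $\min\{e^{-|x_1-x_2|^2/8Cs},e^{-|x_1-y|^2/Cs}\}$ tail estimate. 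Your version avoids the compact exhaustion and the iterated limit interchange altogether, and in fact does not require uniform continuity of $f$ (boundedness and measurability suffice, since $f$ depends only on $z$, not on $u$); the one small point worth making explicit is that the Fubini step rearranging to $\int_{|u|<h}H(u)\,du$ is itself licensed by the same Gaussian majorant you build for the dominated-convergence step, once $h\leq\tfrac12|x_1-x_2|$.
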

\begin{proof}
 Let $S_n=[\max\{\frac{1}{n},t_0\},t_1]\times\{x\in\mathbb{R}^d:|x|\leq n\}$. Note that
\begin{align*}
&V(h)^{-1}\mathbb{E}[\int_{t_0}^{t_1}\mathbf{1}_{|X_s|<h}f(X_s^2)ds]\\&=V(h)^{-1}\int_{t_0}^{t_1}\int_{\mathbb{R}^d}\int_{|y-z|\leq h}p_1(0,x_1;s,y)p_2(0,x_2;s,z)f(z)dydzds.\label{cor}\numberthis
\end{align*}
By continuity of $p$, we know that 
\begin{displaymath}
\lim_{h\rightarrow 0}V(h)^{-1}\int_{|y-z|\leq h}p_1(0,x_1;s,y)p_2(0,x_2;s,z)f(z)dy=p_1(0,x_1;s,z)p_2(0,x_2;s,z)f(z).
\end{displaymath}
So, if we let $h\rightarrow 0$ in (\ref{cor}) and justify changing the order of limit and integral on the right hand side, we would get the desired result. Now, using the H\"older continuity result in Theorem \ref{aronson} and uniform continuity of $f$,  
\begin{displaymath}
V(h)^{-1}\int_{|y-z|\leq h}p_1(0,x_1;s,y)p_2(0,x_2;s,z)f(z)dy
\end{displaymath}
actually converges uniformly to $p_1(0,x_1;s,z)p_2(0,x_2;s,z)f(z)$ in $S_n$. Therefore, we know that
\begin{align*}
&\lim_{h\rightarrow 0}V(h)^{-1}\int_{S_n}\int_{|y-z|\leq h}p_1(0,x_1;s,y)p_2(0,x_2;s,z)f(z)dydzds\\
&=\int_{S_n}\lim_{h\rightarrow 0}V(h)^{-1}\int_{|y-z|\leq h}p_1(0,x_1;s,y)p_2(0,x_2;s,z)f(z)dydzds\\
&=\int_{S_n}p_1(0,x_1;s,z)p_2(0,x_2;s,z)f(z)dzds
\end{align*}
Now, we use the Theorem \ref{aronson} to deduce that
\begin{align*}
&V(h)^{-1}\int_{(S_n)^c}\int_{|y-z|\leq h}p_1(0,x_1;s,y)p_2(0,x_2;s,z)f(z)dzdsdy \\
&\leq V(h)^{-1}e^{2Ct_1}\int_{(S_n)^c}\int_{|y-z|\leq h}C^2s^{-d}exp\{\frac{-|x_1-y|^2-|x_2-z|^2}{Cs}\}dzdsdy.
\end{align*}
We would like to show that the right hand side in the above inequality converges to zero uniformly in $h$ as $n\rightarrow\infty$. Now, if we let $h<\frac{|x_1-x_2|}{2}$ and assume $|y-z|\leq h$ then we have, by triangle inequality, that 
\begin{displaymath}
|x_1-y|+|x_2-z|+|y-z|\geq |x_1-x_2|,
\end{displaymath}
and thus
\begin{displaymath}
|x_1-y|+|x_2-z|\geq \frac{|x_1-x_2|}{2}.
\end{displaymath}
Therefore, we also have
\begin{displaymath}
|x_1-y|^2+|x_2-z|^2\geq \frac{|x_1-x_2|^2}{8}.
\end{displaymath}
We can further deduce that
\begin{align*}
&V(h)^{-1}e^{2Ct_1}\int_{(S_n)^c}\int_{|y-z|\leq h}C^2s^{-d}exp\{\frac{-|x_1-y|^2-|x_2-z|^2}{Cs}\}dzdsdy\\
&\leq C^2e^{2Ct_1}\int_{(S_n)^c}s^{-d}\min\{e^{\frac{-|x_1-x_2|^2}{8Cs}},e^{-\frac{|x_1-y|^2}{Cs}}\}dsdy.
\end{align*}
Note that for sufficiently large $m>0$, we have that for all $|y|>m$ and $0<s\leq t_1$ 
\begin{displaymath}
s^{-d}e^{-\frac{|x_1-y|^2}{Cs}}\leq t_1^{-d}e^{-\frac{|x_1-y|^2}{Ct_1}}.
\end{displaymath}
Then we obtain
\begin{align*}
&\int_{|y|>m}\int_0^{t_1}s^{-d}\min\{e^{\frac{-|x_1-x_2|^2}{8Cs}},e^{-\frac{|x_1-y|^2}{Cs}}\}dsdy\\
&\leq t_1\int_{|y|>m}t_1^{-d}e^{-\frac{|x_1-y|^2}{Ct_1}}dy<\infty,
\end{align*}
and 
\begin{align*}
&\int_{|y|\leq m}\int_0^{t_1}s^{-d}\min\{e^{\frac{-|x_1-x_2|^2}{8Cs}},e^{-\frac{|x_1-y|^2}{Cs}}\}dsdy\\
&\leq V(m)\int_0^{t_1}s^{-d}e^{-\frac{|x_1-x_2|^2}{8Cs}}ds<\infty.
\end{align*}
Summing up, we have 
\begin{displaymath}
\int_{\mathbb{R}^d}\int_0^{t_1}s^{-d}\min\{e^{\frac{-|x_1-x_2|^2}{8Cs}},e^{-\frac{|x_1-y|^2}{Cs}}\}dsdy<\infty.
\end{displaymath}
Thus,
\begin{displaymath}
C^2e^{2Ct_1}\int_{(S_n)^c}s^{-d}\min\{e^{\frac{-|x_1-x_2|^2}{8Cs}},e^{-\frac{|x_1-y|^2}{Cs}}\}dsdy\rightarrow 0
\end{displaymath}
and the convergence is uniform in $h$.
Now, we use H\"older continuity to obtain
\begin{align*}
 &\lim_{h\rightarrow 0}V(h)^{-1}\mathbb{E}(\int_{t_0}^{t_1}\mathbf{1}_{|X_s|<h}f(X_s^2)ds)\\
&=\lim_{h\rightarrow 0}V(h)^{-1}\int^{\mathbb{R}^d}\int_{|y-z|\leq h}\int_{t_0}^{t_1}p_1(0,x_1;s,y)p_2(0,x_2;s,z)f(z)dsdydz\\
&=\lim_{h\rightarrow 0}\lim_{n\rightarrow\infty}V(h)^{-1}\int_{(S_n)}\int_{|y-z|\leq h}p_1(0,x_1;s,y)p_2(0,x_2;s,z)f(z)dzdsdy\\
&=\lim_{n\rightarrow\infty}\lim_{h\rightarrow 0}V(h)^{-1}\int_{(S_n)}\int_{|y-z|\leq h}p_1(0,x_1;s,y)p_2(0,x_2;s,z)f(z)dzdsdy\\
&=\lim_{n\rightarrow\infty}\int_{(S_n)}p_1(0,x_1;s,y)p_2(0,x_2;s,y)f(y)dyds\\
&=\int_{t_0}^{t_1}\int_{\mathbb{R}^d}p_1(0,x_1;s,z)p_2(0,x_2;s,z)f(z)dzds.
\end{align*}
We could swap the order of limits in the third line to the fourth line because we have uniform convergence of the integral.
\end{proof}

As mentioned earlier, it is intuitive to believe that during a small amount of time, the drift term in the free motion won't affect the transition density much. To formalize this idea, we will need some estimates on the transition density of a Brownian motion with drift. The following theorem provides us a tight bound for it. 
\begin{thm}\label{drift}
Consider
\begin{displaymath}
dX_t=b_tdt+dB_t, 
\end{displaymath}
\begin{displaymath}
 X_0=x,
\end{displaymath}
where $B_t$ is a standard Brownian motion in $\mathbb{R}^d$ and $b_t$ is an $\mathcal{F}_t$ adapted process and $|b^i_t|<C$ for all $t$ and $i=1,2,\dots,d$.  Then for all $t>0$ and all $y\in\mathbb{R}^d$, the random variable $X_t$ has a density function $\rho$ such that
\begin{displaymath}
\frac{1}{(2\pi t)^{\frac{d}{2}}}\prod_{i=1}^d(\int^\infty_{|x^i-y^i|/\sqrt{t}}ze^{-(z+C\sqrt{t})^2/2}dz)\leq \rho(t,y),
\end{displaymath}
and
\begin{displaymath}
\frac{1}{(2\pi t)^{\frac{d}{2}}}\prod_{i=1}^d(\int^\infty_{|x^i-y^i|/\sqrt{t}}ze^{-(z-C\sqrt{t})^2/2}dz)\geq \rho(t,y),
\end{displaymath}
and the two bounds are attained when $b^i_t=Csgn(X^i_t-y^i)$ and when $b^i_t=Csgn(y^i-X^i_t)$ respectively. Moreover, $\rho$ can be chosen to be locally H\"older continuous.
\end{thm}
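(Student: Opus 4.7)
The plan is to identify the two bang-bang drifts $b^{\pm,i}_t = \pm C\,\mathrm{sgn}(X^i_t - y^i)$ as the extremizers of the density $\rho(t,y)$ among admissible drifts, and then to compute the extremal densities in closed form. Under either bang-bang drift the $i$-th component $b^{\pm,i}$ depends only on $X^i$, so the SDE decouples into $d$ independent one-dimensional SDEs; hence the law of $X_t$ factorises as a product of one-dimensional laws, matching exactly the product structure of the claimed bounds. The remaining work is therefore a tight one-dimensional computation of the extremal density together with a proof that no other admissible drift can do better (resp.\ worse).

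For the one-dimensional computation I would consider $dY_t = \pm C\,\mathrm{sgn}(Y_t - y)\,dt + dW_t$ with $Y_0 = x$ and apply Tanaka's formula to $|Y_t - y|$, obtaining
\[
d|Y_t - y| = \pm C\, dt + \mathrm{sgn}(Y_t - y)\, dW_t + dL^y_t,
\]
where $L^y$ is the local time of $Y$ at $y$. By L\'evy's characterisation, $\mathrm{sgn}(Y - y)\, dW$ is a standard Brownian motion, so $|Y - y|$ is a Brownian motion with drift $\pm C$ reflected at the origin, starting from $|x - y|$. Computing the density of this reflected drifted Brownian motion at $0$ via the reflection principle (or equivalently via Girsanov against undrifted reflected BM, using that the Girsanov density on $|Y-y|=0$ is an exponential in the drift and the local time, both of which are explicit) yields precisely the one-dimensional version of the integrals $(2\pi t)^{-1/2}\int^\infty_{|x-y|/\sqrt{t}} z\, e^{-(z \pm C\sqrt{t})^2/2}\, dz$. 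Multiplying across coordinates gives the stated product expressions.

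To see that the bang-bang drifts really do extremise I would set up a stochastic control problem with value function $V^+(s, z) = \sup_b \rho_b(t, y; s, z)$ taken over Markov controls with $|b^i| \le C$; its HJB equation reads $\partial_s V^+ + \tfrac{1}{2}\Delta V^+ + C\sum_i |\partial_i V^+| = 0$ with terminal Dirac condition at $y$. Since the relevant density is unimodal and peaks at $y$, one has $\mathrm{sgn}(\partial_i V^+)(s, z) = \mathrm{sgn}(y^i - z^i)$, so the optimal feedback is exactly the bang-bang drift $b^+$; a standard dynamic programming argument then shows this Markov optimum dominates the supremum over all adapted drifts. The lower bound follows by an entirely symmetric argument with $\inf$ in place of $\sup$. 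The chief obstacle I anticipate is handling the discontinuous extremal drift and the Dirac terminal condition rigorously; the natural remedy is to approximate both the $\mathrm{sgn}$ drift and the point mass by smooth objects, derive the bounds in the approximating regime, and pass to the limit using uniform control of the Girsanov density (Novikov is trivial since the drifts are bounded). Finally, the Hölder continuity of $\rho$ is inherited from Theorem \ref{aronson} applied to the SDE with the bounded measurable drift $b$ folded into the generator.
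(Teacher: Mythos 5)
Your reduction of the bang-bang case to a one-dimensional reflected drifted Brownian motion via Tanaka's formula is a sensible alternative route to the closed-form expressions (though note the density of $|Y_t-y|$ at $0$ equals $2\rho(t,y)$, so a factor of $\tfrac12$ must be tracked). Your HJB/verification argument for extremality is in the same spirit as what the paper does, but the paper's route is cleaner: it changes measure by Cameron--Martin so that the bang-bang process becomes standard Brownian motion, conditions on the terminal value, and invokes the Lyons--Zheng representation of the conditioned process to show the resulting Girsanov density is $\le 1$ for every admissible drift. This avoids having to regularize both the Dirac terminal condition and the discontinuous $\mathrm{sgn}$ feedback, which you correctly flag as the obstacle in the control formulation.

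The genuine gap is the H\"older continuity. You cannot obtain it by citing Theorem \ref{aronson}. That theorem is about Markovian diffusions whose coefficients are functions of position, so that the density solves a Fokker--Planck equation and Aronson/De~Giorgi--Nash--Moser regularity applies. In Theorem \ref{drift} the drift $b_t$ is an arbitrary bounded $\mathcal{F}_t$-adapted process; $X$ need not be Markov and $\rho(t,\cdot)$ need not solve any parabolic PDE, so there is no ``generator'' to fold $b$ into. This generality is not cosmetic: the result is later applied to the time-changed difference $Y(t)=X(\tau_t)$, whose drift $b(t)=\dfrac{b_1(X^1_{\tau_t})-b_2(X^2_{\tau_t})}{a_1(X^1_{\tau_t})+a_2(X^2_{\tau_t})}$ is a functional of the underlying pair of diffusions, not a function of $Y_t$. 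The paper therefore proves H\"older continuity in $y$ and $t$ directly from the two-sided bounds, by writing $\rho(t,y)=\int\rho(s,u)\,\bar\rho_{s,u}(t-s,y)\,du$ with $s$ slightly below $t$ and showing, via the sharp upper and lower bounds on the conditional density $\bar\rho_{s,u}$, that $\bar\rho_{s,u}(t-s,y)$ and $\bar\rho_{s,u}(t-s,z)$ (respectively $\bar\rho_{s,u}(t_1-s,y)$ and $\bar\rho_{s,u}(t_2-s,y)$) differ by a controllable multiplicative factor plus a small additive error. This constitutes the bulk of the proof of Theorem \ref{drift}, and your proposal would need to supply an argument of that kind rather than appeal to parabolic regularity.
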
\begin{proof}
In \cite{sharp}, they have shown that the above inequalities are true in the case when $b_t$ is a function of $X_t$. Their method also works if $b_t$ is any $\mathcal{F}_t$ adapted process. They have shown that the two bounds are attained when when $b^i_t=Csgn(X^i_t-y^i)$ and when $b^i_t=Csgn(y^i-X^i_t)$ respectively. Now, denote $p^+_y(0,x;t,z)$ for the transition density when $b^i_t=Csgn(y^i-X^i_t)$ and $p^-_y(0,x;t,z)$ for the transition density when $b^i_t=Csgn(X^i_t-y^i)$. Consider the measure $Q$ under which the process
\begin{displaymath}
dX_t=Csgn(y^i-X^i_t)dt+dB_t
\end{displaymath} 
in the usual measure is a standard Brownian motion. Then the process
\begin{displaymath}
dX_t=b_tdt+dB_t
\end{displaymath} 
in the usual measure will be a Brownian motion with drift $b_t-Csgn(y^i-X^i_t)$ under $Q$. Therefore, by Cameron-Martin formula, we can let $\rho$ be such that
\begin{displaymath}
\frac{\rho(t,y)}{p^+_y(0,x;t,y)}=\mathbb{E}_{x,y}\big\{\prod_{i=1}^d\exp[\int_0^t(b^i_s-Csgn(y^i-W^i_s))dB^i_s-\frac{1}{2}\int_0^t(b^i_s-Csgn(y^i-W^i_s))^2ds]\big\},
\end{displaymath}
where $W$ is the motion satisfying
\begin{displaymath}
 dW^i_s=Csgn(y^i-W^i_s)ds+dB^i_s
\end{displaymath}
and $\mathbb{E}_{x,y}$ denotes the expectation conditioning on $W_0=x$ and $W_t=y$. The conditional expectation $\mathbb{E}_{x,y}$ can be shown to be well defined using Gaussian heat kernel estimation. In \cite{conditional}, they have shown that for $s<t$, we can represent $B_s$ by
\begin{displaymath}
 B^i_s=B'^i_s+\int_0^s\frac{\partial}{\partial W^i}log[p^+_y(s,W_s;t,y)]ds,
\end{displaymath}
for some unconditional Brownian motion $B'$. Also note that $p^+_y(s,W_s;t,y)$ is non-increasing in $|W_s^i-y_i|$. So, $\frac{\partial}{\partial W^i_s}log[p^+_y(s,W_s;t,y)]$ has same sign as $y^i-W^i_s$. Also, $b^i_s\leq C$, so 
\begin{displaymath}
[b^i_s-Csgn(y^i-W^i_s)]\frac{\partial}{\partial W^i_s}log[p^+_y(s,W_s;t,y)]\leq 0.
\end{displaymath}
Now, we can plug these in and obtain
\begin{align*}
\frac{\rho(t,y)}{p^+_y(0,x;t,y)}&=\mathbb{E}_{x,y}\Big\{\prod_{i=1}^d\exp\big\{\int_0^t(b^i_s-Csgn(y^i-W^i_s))dB'^i_s-\frac{1}{2}\int_0^t(b^i_s-Csgn(y^i-W^i_s))^2ds\\&+\int_0^t(b^i_s-Csgn(y^i-W^i_s))\frac{\partial}{\partial W^i}log[p^+_y(s,W_s;t,y)]ds\big\}\Big\}\\
&\leq \mathbb{E}_{x,y}\big\{\prod_{i=1}^d\exp[\int_0^t(b^i_s-Csgn(y^i-W^i_s))dB'^i_s-\frac{1}{2}\int_0^t(b^i_s-Csgn(y^i-W^i_s))^2ds]\big\}\\
&=1.
\end{align*}
This proves the upper bound for the density and the lower bound can be obtained similarly. 

Now, we will use these bounds to prove the local H\"older continuity of the transition density. Intuitively, when $t$ is small, $C\sqrt{t}$ is also small and the two bounds in the theorem are close to each other. This means that the drift has little influence on the transition kernel when $t$ is small. So, if we condition on the distribution of the $X_s$ for $s$ slightly smaller than $t$, then because we know that the transition kernel of a Brownian motion without drift is locally H\"older continuous and the drift term's influence on the transition kernel is small, we expect that the probability density function of $X_t$ is also locally H\"older continuous. Then we use tower law to express the expectation of the probability density function of $X_t$ without conditioning on $X_s$ and that should also be locally H\"older continuous.

Now, we formalize this idea. Let $s>0$ be fixed, and let $Y_t=X_{t+s}$ and $\mathcal{G}_t=\mathcal{F}_{t+s}$, then $Y$ is a $\mathcal{G}_t$ adapted process satisfying the conditions in the theorem. So, the random variable $Y_t$ conditional on $X_s=u$ has a density function, $\bar{\rho}_{s,u}(t,y)$, satisfying
\begin{displaymath}
\bar{\rho}_{s,u}(t,y)\leq\frac{1}{(2\pi t)^{\frac{d}{2}}}\prod_{i=1}^d(\int^\infty_{|u^i-y^i|/\sqrt{t}}we^{-(w-C\sqrt{t})^2/2}dw)
\end{displaymath}
and
\begin{displaymath}
\bar{\rho}_{s,u}(t,y)\geq\frac{1}{(2\pi t)^{\frac{d}{2}}}\prod_{i=1}^d(\int^\infty_{|u^i-y^i|/\sqrt{t}}we^{-(w+C\sqrt{t})^2/2}dw),
\end{displaymath}
where $u$ denotes the position $X_s$. Now, let $|y-z|<\epsilon$ for some sufficiently small positive $\epsilon$, we have for $s<t$,
\begin{displaymath}
 \rho(t,y)=\int_{\mathbb{R}^d}\rho(s,u)\bar{\rho}_{s,u}(t-s,y)du,
\end{displaymath}
and 
\begin{displaymath}
\rho(t,y)-\rho(t,z)=\int_{\mathbb{R}^d}\rho(s,u)[\bar{\rho}_{s,u}(t-s,y)-\bar{\rho}_{s,u}(t-s,z)]du.
\end{displaymath}
If we let $s=t-\epsilon$, then we have
\begin{align*}
 \bar{\rho}_{s,u}(t-s,y)&\leq \frac{1}{(2\pi \epsilon)^{\frac{d}{2}}}\prod_{i=1}^d(\int^\infty_{|u^i-y^i|/\sqrt{\epsilon}}we^{-(w-C\sqrt{\epsilon})^2/2}dw)\\
&=\frac{1}{(2\pi \epsilon)^{\frac{d}{2}}}\prod_{i=1}^d(\int^\infty_{|u^i-y^i|/\sqrt{\epsilon}-C\sqrt{\epsilon}}(w+C\sqrt{\epsilon})e^{-w^2/2}dw)
\end{align*}
Because 
\begin{displaymath}
e^{-\frac{(|u^i-y^i|/\sqrt{\epsilon}-C\sqrt{\epsilon})^2}{2}}=\int^\infty_{|u^i-y^i|/\sqrt{\epsilon}-C\sqrt{\epsilon}}we^{-w^2/2}dw,
\end{displaymath}
we have
\begin{align*}
&\frac{1}{(2\pi \epsilon)^{\frac{d}{2}}}\prod_{i=1}^d(\int^\infty_{|u^i-y^i|/\sqrt{\epsilon}-C\sqrt{\epsilon}}(w+C\sqrt{\epsilon})e^{-w^2/2}dw)\\
&=\frac{1}{(2\pi \epsilon)^{\frac{d}{2}}}\prod_{i=1}^d[e^{-\frac{(|u^i-y^i|/\sqrt{\epsilon}-C\sqrt{\epsilon})^2}{2}}(\frac{\int^\infty_{|u^i-y^i|/\sqrt{\epsilon}-C\sqrt{\epsilon}}(w+C\sqrt{\epsilon})e^{-w^2/2}dw}{\int^\infty_{|u^i-y^i|/\sqrt{\epsilon}-C\sqrt{\epsilon}}we^{-w^2/2}dw})].\\
\end{align*}
Now, consider the function
\begin{displaymath}
f(x)=\frac{\int^\infty_{x}(w+C\sqrt{\epsilon})e^{-w^2/2}dw}{\int^\infty_{x}we^{-w^2/2}dw},
\end{displaymath}
we have
\begin{align*}
\frac{f(x)}{dx}&=\frac{xe^{-x^2/2}\int^\infty_{x}(w+C\sqrt{\epsilon})e^{-w^2/2}dw-(x+C\sqrt{\epsilon})e^{-x^2/2}\int^\infty_{x}we^{-w^2/2}dw}{(\int^\infty_{x}we^{-w^2/2}dw)^2}\\
&=\frac{e^{-x^2/2}\int^\infty_{x}[x(w+C\sqrt{\epsilon})-(x+C\sqrt{\epsilon})w]e^{-w^2/2}dw}{(\int^\infty_{x}we^{-w^2/2}dw)^2}\\
&=\frac{e^{-x^2/2}\int^\infty_{x}[xC\sqrt{\epsilon}-wC\sqrt{\epsilon}]e^{-w^2/2}dw}{(\int^\infty_{x}we^{-w^2/2}dw)^2}\\
&\leq 0.
\end{align*}
So, $f$ is a non-increasing function and thus
\begin{align*}
&\frac{1}{(2\pi \epsilon)^{\frac{d}{2}}}\prod_{i=1}^d[e^{-\frac{(|u^i-y^i|/\sqrt{\epsilon}-C\sqrt{\epsilon})^2}{2}}(\frac{\int^\infty_{|u^i-y^i|/\sqrt{\epsilon}-C\sqrt{\epsilon}}(w+C\sqrt{\epsilon})e^{-w^2/2}dw}{\int^\infty_{|u^i-y^i|/\sqrt{\epsilon}-C\sqrt{\epsilon}}we^{-w^2/2}dw})]\\
&\leq\frac{1}{(2\pi \epsilon)^{\frac{d}{2}}}\prod_{i=1}^d[e^{-\frac{(|u^i-y^i|/\sqrt{\epsilon}-C\sqrt{\epsilon})^2}{2}}(\frac{\int^\infty_{-C\sqrt{\epsilon}}(w+C\sqrt{\epsilon})e^{-w^2/2}dw}{\int^\infty_{-C\sqrt{\epsilon}}we^{-w^2/2}dw})]\\
&=\frac{1}{(2\pi \epsilon)^{\frac{d}{2}}}\prod_{i=1}^d[e^{-\frac{(|u^i-y^i|/\sqrt{\epsilon}-C\sqrt{\epsilon})^2}{2}}(1+\frac{C\sqrt{\epsilon}\int^\infty_{-C\sqrt{\epsilon}}e^{-w^2/2}dw}{\int^\infty_{-C\sqrt{\epsilon}}we^{-w^2/2}dw})]\\
&\leq \frac{1}{(2\pi \epsilon)^{\frac{d}{2}}}\prod_{i=1}^d[e^{-\frac{(|u^i-y^i|/\sqrt{\epsilon}-C\sqrt{\epsilon})^2}{2}}(1+\frac{C\sqrt{\epsilon}\int^\infty_{-C}e^{-w^2/2}dw}{\int^\infty_{-C}we^{-w^2/2}dw})]\\
&\leq\frac{1}{(2\pi \epsilon)^{\frac{d}{2}}}\prod_{i=1}^d[e^{-\frac{(|u^i-y^i|/\sqrt{\epsilon}-C\sqrt{\epsilon})^2}{2}}(1+c\sqrt{\epsilon})].
\end{align*}
for some constant $c$ which does not depend on $u$. Now we will denote $c$ as a constant depending only on $C$ and $d$ and its value might change from line to line. This concludes that
\begin{displaymath}\numberthis\label{eqn0}
 \bar{\rho}_{s,u}(t-s,y)\leq\frac{1}{(2\pi \epsilon)^{\frac{d}{2}}}\prod_{i=1}^d[e^{-\frac{(|u^i-y^i|/\sqrt{\epsilon}-C\sqrt{\epsilon})^2}{2}}(1+c\sqrt{\epsilon})].
\end{displaymath}
Similarly, we have
\begin{align*}
 \bar{\rho}_{s,u}(t-s,z)&\geq\frac{1}{(2\pi \epsilon)^{\frac{d}{2}}}\prod_{i=1}^d(\int^\infty_{|u^i-y^i|/\sqrt{\epsilon}}we^{-(w+C\sqrt{\epsilon})^2/2}dw)\\
&=\frac{1}{(2\pi \epsilon)^{\frac{d}{2}}}\prod_{i=1}^d(\int^\infty_{|u^i-y^i|/\sqrt{\epsilon}+C\sqrt{\epsilon}}(w-C\sqrt{\epsilon})e^{-w^2/2}dw)\\
&=\frac{1}{(2\pi \epsilon)^{\frac{d}{2}}}\prod_{i=1}^d[e^{-\frac{(|u^i-y^i|/\sqrt{\epsilon}+C\sqrt{\epsilon})^2}{2}}(\frac{\int^\infty_{|u^i-y^i|/\sqrt{\epsilon}+C\sqrt{\epsilon}}(w-C\sqrt{\epsilon})e^{-w^2/2}dw}{\int^\infty_{|u^i-y^i|/\sqrt{\epsilon}+C\sqrt{\epsilon}}we^{-w^2/2}dw})]\\
&\geq \frac{1}{(2\pi \epsilon)^{\frac{d}{2}}}\prod_{i=1}^d[e^{-\frac{(|u^i-y^i|/\sqrt{\epsilon}+C\sqrt{\epsilon})^2}{2}}(\frac{\int^\infty_{0}(w-C\sqrt{\epsilon})e^{-w^2/2}dw}{\int^\infty_{0}we^{-w^2/2}dw})]\\
&=\frac{1}{(2\pi \epsilon)^{\frac{d}{2}}}\prod_{i=1}^d[e^{-\frac{(|u^i-y^i|/\sqrt{\epsilon}+C\sqrt{\epsilon})^2}{2}}(1-\frac{C\sqrt{\epsilon}\int^\infty_{0}e^{-w^2/2}dw}{\int^\infty_{0}we^{-w^2/2}dw})]\\
&\geq\frac{1}{(2\pi \epsilon)^{\frac{d}{2}}}\prod_{i=1}^d[e^{-\frac{(|u^i-y^i|/\sqrt{\epsilon}+C\sqrt{\epsilon})^2}{2}}(1-c\sqrt{\epsilon})].
\end{align*}
So, if $|u-y|\leq\epsilon^{1/4}$, then we have
\begin{align*}
 \frac{\bar{\rho}_{s,u}(t-s,y)}{\bar{\rho}_{s,u}(t-s,z)}&\leq \frac{1+c\sqrt{\epsilon}}{1-c\sqrt{\epsilon}}\prod_{i=1}^d[e^{-\frac{(|u^i-y^i|/\sqrt{\epsilon}-C\sqrt{\epsilon})^2}{2}}e^{\frac{(|u^i-z^i|/\sqrt{\epsilon}+C\sqrt{\epsilon})^2}{2}}]\\
&\leq\frac{1+c\sqrt{\epsilon}}{1-c\sqrt{\epsilon}}\prod_{i=1}^de^{c\epsilon^{\frac{1}{4}}}\\
&\leq 1+c\epsilon^{\frac{1}{4}}.
\end{align*}
If $|u-y|\geq\epsilon^{1/4}$, then we simply have
\begin{displaymath}
 \bar{\rho}_{s,u}(t-s,y)\leq c\epsilon^{-\frac{d}{2}}e^{-\frac{c}{\sqrt{\epsilon}}}\leq\epsilon,
\end{displaymath}
say. So, for general $u\in\mathbb{R}^d$, we would have
\begin{displaymath}
 \bar{\rho}_{s,u}(t-s,y)-\bar{\rho}_{s,u}(t-s,z)\leq c\epsilon^{1/4}\bar{\rho}_{s,u}(t-s,z)+\epsilon.
\end{displaymath}
Thus, we have
\begin{align*}
\rho(t,y)-\rho(t,z)&= \int_{\mathbb{R}^d}\rho(s,u)[\bar{\rho}_{s,u}(t-s,y)-\bar{\rho}_{s,u}(t-s,z)]du\\
&\leq\int_{\mathbb{R}^d}\rho(s,u)[c\epsilon^{1/4}\bar{\rho}_{s,u}(t-s,z)+\epsilon]du\\
&=c\epsilon^{1/4}\rho(t,z)+\epsilon.
\end{align*}
Similarly, we have
\begin{displaymath}
 \rho(t,z)-\rho(t,y)\leq c\epsilon^{1/4}\rho(t,z)+\epsilon,
\end{displaymath}
which gives the local H\"older continuity of $\rho$ in $y$. 

Now, we will use the same idea to obtain the local H\"older continuity of $\rho$ in $t$. Let $0< t_2-t_1\leq\epsilon$ and let $t_1-s=\sqrt{\epsilon}$. Then we have
\begin{displaymath}
\bar{\rho}_{s,u}(t_1-s,y)\leq \frac{1}{(2\pi \sqrt{\epsilon})^{\frac{d}{2}}}\prod_{i=1}^d[e^{-\frac{(|u^i-y^i|\epsilon^{-\frac{1}{4}}-C\epsilon^{\frac{1}{4}})^2}{2}}(1+c\epsilon^\frac{1}{4})]
\end{displaymath}
and
\begin{displaymath}
\bar{\rho}_{s,u}(t_2-s,y)\geq \frac{1}{[2\pi(\sqrt{\epsilon}+\epsilon)]^{\frac{d}{2}}}\prod_{i=1}^d[e^{-\frac{[|u^i-y^i|(\epsilon+\sqrt{\epsilon})^{-\frac{1}{2}}+C(\sqrt{\epsilon}+\epsilon)^{\frac{1}{2}}]^2}{2}}(1-c\epsilon^{1/4})].
\end{displaymath}
When $|u-y|\leq\epsilon^{\frac{1}{8}}$, we have
\begin{align*}
&[|u^i-y^i|(\epsilon+\sqrt{\epsilon})^{-\frac{1}{2}}+C(\sqrt{\epsilon}+\epsilon)^{\frac{1}{2}}]^2\\
&=|u^i-y^i|(\epsilon+\sqrt{\epsilon})^{-1}+2C|u^i-y^i|+C^2(\sqrt{\epsilon}+\epsilon)\\
&\leq |u^i-y^i|\epsilon^{-\frac{1}{2}}+c\epsilon^{\frac{1}{8}}\\
&\leq (|u^i-y^i|\epsilon^{-\frac{1}{4}}-C\epsilon^{\frac{1}{4}})^2+c\epsilon^{\frac{1}{8}}.
\end{align*}
Therefore, we can deduce
\begin{align*}
&\frac{1}{[2\pi(\sqrt{\epsilon}+\epsilon)]^{\frac{d}{2}}}\prod_{i=1}^d[e^{-\frac{[|u^i-y^i|(\epsilon+\sqrt{\epsilon})^{-\frac{1}{2}}+C(\sqrt{\epsilon}+\epsilon)^{\frac{1}{2}}]^2}{2}}(1-c\epsilon^\frac{1}{4})]\\
&\geq\frac{1}{(2\pi\sqrt{\epsilon})^{\frac{d}{2}}(1+\sqrt{\epsilon})^{\frac{d}{2}}}\prod_{i=1}^d[e^{-\frac{(|u^i-y^i|\epsilon^{-\frac{1}{4}}-C\epsilon^{1/4})^2+c\epsilon^{\frac{1}{8}}}{2}}(1-c\epsilon^{\frac{1}{4}})]\\
&\geq  (1-\epsilon^{\frac{1}{8}})\frac{1}{(2\pi \sqrt{\epsilon})^{\frac{d}{2}}}\prod_{i=1}^d[e^{-\frac{(|u^i-y^i|\epsilon^{-\frac{1}{4}}-C\epsilon^{\frac{1}{4}})^2}{2}}].
\end{align*}
This gives us
\begin{displaymath}
\frac{\bar{\rho}_{s,u}(t_1-s,y)}{\bar{\rho}_{s,u}(t_2-s,y)}\leq 1+c\epsilon^{\frac{1}{8}}.
\end{displaymath}
Again, when $|u-y|\geq \epsilon^{\frac{1}{8}}$, we have
\begin{displaymath}
 \bar{\rho}_{s,u}(t_1-s,y)\leq c\epsilon^{-\frac{d}{4}}e^{-c\epsilon^{-\frac{1}{4}}}\leq\epsilon.
\end{displaymath}
So, for general $u\in\mathbb{R}^d$, we would have
\begin{displaymath}
 \bar{\rho}_{s,u}(t_1-s,y)-\bar{\rho}_{s,u}(t_2-s,y)\leq c\epsilon^{1/8}\bar{\rho}_{s,u}(t_2-s,y)+\epsilon.
\end{displaymath}
Thus, we have
\begin{align*}
\rho(t_1,y)-\rho(t_2,y)&= \int_{\mathbb{R}^d}\rho(s,u)[\bar{\rho}_{s,u}(t_1-s,y)-\bar{\rho}_{t_2,u}(t_2-s,y)]du\\
&\leq\int_{\mathbb{R}^d}\rho(s,u)[c\epsilon^{1/8}\bar{\rho}_{s,u}(t_2-s,y)+\epsilon]du\\
&=c\epsilon^{1/8}\rho(t_2,y)+\epsilon.
\end{align*}
Similarly, we would have 
\begin{displaymath}
\rho(t_2,y)-\rho(t_1,y)\leq c\epsilon^{1/8}\rho(t_2,y)+\epsilon
\end{displaymath}
and this proves that the transition density is locally H\"older continuous in $t$.
\end{proof} 

\subsection{Proof of Theorem \ref{diff}}
Now, we can start proving Theorem \ref{diff}. As mentioned earlier, in \cite{bmk}, Norris has already proved the result in the case when particles' free motions are pure Brownian. Our proof of Theorem \ref{diff} will be based on his approach and we will use Corollary \ref{vol} to deal with the diffusivity term that depends on the position of the particles and use Theorem \ref{drift} to deal with the drift term.
\begin{proof}
From now on, we shall write $C$ as a constant depending only on $d$ and $R$, and the value of $C$ might change from line to line. We set $X(t)=X^1(t)-X^2(t)$. As $a_i$ are scalars, we would like to use Dubins-Schwarz theorem to relate $X$ to a Brownian motion with drift. Let
\begin{displaymath}
 A(t)=\int_0^t[a_1(X^1(s))+a_2(X^2(s))]ds.
\end{displaymath}
So, $A(t)$ is the quadratic variation process of $X(t)$. Set $\tau_t$ be the stopping time such that $A(\tau_t)=t$, then we have
\begin{displaymath}
dt=dA(\tau_t)=[a_1(X^1(\tau_t))+a_2(X^2(\tau_t))]d\tau_t.
\end{displaymath}
Let $Y(t)=X(\tau_t)$ and
\begin{displaymath}
B_t=\int_0^{\tau_t}\sqrt{a_1(X^1(s))}dB^1_s-\int_0^{\tau_t}\sqrt{a_2(X^2(s))}dB^2_s.
\end{displaymath}
Then we have, by Dubins-Schwarz theorem, that $B$ is a Brownian motion and
\begin{displaymath}
dB_t=\sqrt{a_1(X^1(s))}dB^1_{\tau_t}-\sqrt{a_2(X^2(s))}dB^2_{\tau_t}.
\end{displaymath}
Note that $A(t)$ is continuous and strictly increasing and goes from $0$ to infinity, we have $\tau_{A(t)}=t$ and $Y(A(t))=X(\tau_{A(t)})=X(t)$. Moreover, let 
\begin{displaymath}
b(t)=\frac{b_1(X^1_{\tau_t})-b_2(X^2_{\tau_t})}{a_1(X^1_{\tau_t})+a_2(X^1_{\tau_t})}, 
\end{displaymath}
 we have
\begin{align*}
 dY(t)&=dX^1(\tau_t)-dX^2(\tau_t)\\
&=\sqrt{a_1(X^1(\tau_t))}dB^1(\tau_t)+b_1(X^1_{\tau_{t}})d(\tau_t)-\sqrt{a_2(X^2(\tau_t))}dB^2(\tau_t)-b_2(X^2_{\tau_t})d(\tau_t)\\
&=dB_t+[b_1(X^1_{\tau_t})-b_2(X^2_{\tau_t})][a_1(X^1_{\tau_t})+a_2(X^1_{\tau_t})]^{-1}dt\\
&=dB_t+b(t)dt.
\end{align*}
Because $|b_i|$'s are bounded above by $R$ and $a_i$'s are bounded below by $R^{-1}$, we have $|b|$ is bounded above by $R^2$. The above equality relates $X_t$ to a Brownian motion with bounded drift, which we can then deal with using Theorem \ref{drift}.

Now, we look back at the derivation of (\ref{eqn1}). To make the argument rigorous, it remains to show that $M_t$ is uniformly bounded up to $T$. Using Theorem \ref{aronson}, we have when $s,T>t>0$,
\begin{align*}
p_1(t,X^1_t;s,z)p_2(t,X_t^2;s,z)&\leq C(s-t)^{-d}e^{C(s-t)}\exp(-\frac{|X^1_t-z|^2+|X^2_t-z|^2}{C(s-t)})\\
&\leq C(s-t)^{-d}e^{Cs}\exp(-\frac{|X^1_t-X^2_t|^2}{4C(s-t)})\\
&\leq C(s-t)^{-d}e^{Cs}\exp(-\frac{r_N^2}{4C(s-t)}),
\end{align*}
which is indeed bounded in $t$.
We will now estimate the right hand side of equation (\ref{eqn1}). We will show that the contribution when $s$ is "far" from $T$ or when $z$ is "far" from $X(T)$ to the integral inside the expectation is small so that we can approximate it by 
\begin{displaymath}
g(T,X(T))\int_T^R\int_{\mathbb{R}^d}p_1(T,X^1_T;s,z)p_2(T,X_T^2;s,z)K(z)dzds.
\end{displaymath}
Then, by using Corollary \ref{vol} we can relate the above integral with the expectation of a functional of $X$. Let $\rho(s,.)$ be the probability density of $Y(s)$. We can then use the relation $Y(A(t))=X(t)$ to write it as a functional of $Y$ and use the same idea as in Corollary \ref{vol} again to approximate it as an integral involving $\rho$ instead of $p_i$. Finally, when $s$ is "close" to $T$, we will use Theorem \ref{drift} to approximate $\rho$ as the transition density of a standard Brownian motion which we can then evaluate. 

We first claim that
\begin{displaymath}
\int_{\epsilon^2}^{R}\int_{\mathbb{R}^d}p_1(0,x_1;s,z)p_2(0,x_2;s,z)a(z)dzds\leq C\epsilon^{2-d}.\numberthis\label{eqn2}
\end{displaymath}
Because we assumed $a_i$ to be H\"older continuous, and thus also uniformly continuous, we can use Corollary \ref{vol} to obtain
\begin{align*}
&\int_{\epsilon^2}^{R}\int_{\mathbb{R}^d}p_1(0,x_1;s,z)p_2(0,x_2;s,z)a(z)dzds\\
&=\lim_{h\rightarrow 0}V(h)^{-1}\mathbb{E}[\int_{\epsilon^2}^{R}\mathbf{1}_{|X_s|<h}a(X_s^2)ds].
\end{align*}
By uniform continuity of $a_i$, we can also deduce
\begin{align*}
&\lim_{h\rightarrow 0}V(h)^{-1}\mathbb{E}[\int_{\epsilon^2}^{R}\mathbf{1}_{|X_s|<h}a(X_s^2)ds]\\
&=\lim_{h\rightarrow 0}V(h)^{-1}\mathbb{E}[\int_{\epsilon^2}^{R}\mathbf{1}_{|X_s|<h}(a_1(X_s^2)+a_2(X_s^2))ds]\\
&=\lim_{h\rightarrow 0}V(h)^{-1}\mathbb{E}[\int_{\epsilon^2}^{R}\mathbf{1}_{|X_s|<h}(a_1(X_s^1)+a_2(X_s^2))ds].\\
\end{align*}
Now, recall that $Y(A(t))=X(t)$ and
\begin{displaymath}
dA(\tau_t)=[a_1(X^1(\tau_t))+a_2(X^2(\tau_t))]d\tau_t,
\end{displaymath}
\begin{displaymath}
2R^{-1}\leq[a_1(X^1(\tau_t))+a_2(X^2(\tau_t))]\leq 2R,
\end{displaymath}
we have
\begin{align*}
&\lim_{h\rightarrow 0}V(h)^{-1}\mathbb{E}[\int_{\epsilon^2}^{R}\mathbf{1}_{|X_s|<h}(a_1(X_s^1)+a_2(X_s^2))ds]\\
&=\lim_{h\rightarrow 0}V(h)^{-1}\mathbb{E}[\int_{\epsilon^2}^{R}\mathbf{1}_{|Y_{A(s)}|<h}(a_1(X_s^1)+a_2(X_s^2))ds]\\
&=\lim_{h\rightarrow 0}V(h)^{-1}\mathbb{E}(\int_{\epsilon^2}^{R}\mathbf{1}_{|Y_{A(s)}|<h}dA(s))\\
&\leq\lim_{h\rightarrow 0}V(h)^{-1}\mathbb{E}(\int_{2R^{-1}\epsilon^2}^{2R^2}\mathbf{1}_{|Y_s|<h}ds).
\end{align*}
Because $\lim_{h\rightarrow 0}V(h)^{-1}\mathbb{E}(\mathbf{1}_{|Y_s|<h})$ converges uniformly in $s\in [2R^{-1}\epsilon^2,2R^2]$ to $\rho(s,0)$, we have
\begin{align*}
&\lim_{h\rightarrow 0}V(h)^{-1}\mathbb{E}(\int_{2R^{-1}\epsilon^2}^{2R^2}\mathbf{1}_{|Y_s|<h}ds)\\
&=\mathbb{E}(\int_{2R^{-1}\epsilon^2}^{2R^2}\lim_{h\rightarrow 0}V(h)^{-1}\mathbf{1}_{|Y_s|<h}ds)\\
&=\int_{2R^{-1}\epsilon^2}^{2R^2}\rho(s,0)ds.
\end{align*}
Now, using the result from Theorem \ref{drift}, we have for $2R^{-1}\epsilon^2<s<2R^2$,
\begin{align*}
 \rho(s,0)&\leq\frac{1}{(2\pi s)^{\frac{d}{2}}}\prod_{i=1}^d(\int^\infty_{|x_1-x_2|/\sqrt{s}}ze^{-(z-C\sqrt{s})^2/2}dz)\\
&\leq Cs^{-\frac{d}{2}}\prod_{i=1}^d(\int^\infty_{0}ze^{-(z-CR)^2/2}dz)\\
&\leq Cs^{-\frac{d}{2}}.
\end{align*}
So, we have
\begin{displaymath}
 \int_{2R^{-1}\epsilon^2}^{2R^2}\rho(s,0)ds\leq C\int_{2R^{-1}\epsilon^2}^{2R^2}s^{-\frac{d}{2}}\leq C\epsilon^{2-d},
\end{displaymath}
as required. 

Now, we claim that when $|x_1-x|\leq \frac{\epsilon}{2}$ and $|x_2-x|\leq \frac{\epsilon}{2}$, then
\begin{displaymath}
 \int_0^R\int_{|z-x|>\epsilon}p_1(0,x_1;s,z)p_2(0,x_2;s,z)dzds\leq C\epsilon^{2-d}.\numberthis\label{eqn3}
\end{displaymath}
For this, we can just apply Theorem \ref{aronson} to obtain
\begin{align*}
 &\int_0^R\int_{|z-x|>\epsilon}p_1(0,x_1;s,z)p_2(0,x_2;s,z)dzds\\
&\leq\int_0^R\int_{|z-x|>\epsilon}C^2s^{-d}\exp\{-\frac{|x_1-z|^2+|x_2-z|^2}{Cs}\}e^{2Cs}dzds\\
&\leq\int_0^R\int_{|z-x|>\epsilon}C^2e^{2CR}s^{-d}\exp\{-\frac{|x_1-z|^2+|x_2-z|^2}{Cs}\}dzds\\
&\leq C\int_{|z-x|>\epsilon}\int_0^Rs^{-d}\exp\{-\frac{|x_1-z|^2+|x_2-z|^2}{Cs}\}dsdz.
\end{align*}
Substitute $u=\frac{s}{|x_1-z|^2+|x_2-z|^2}$, we have
\begin{align*}
&C\int_{|z-x|>\epsilon}\int_0^Rs^{-d}\exp\{-\frac{|x_1-z|^2+|x_2-z|^2}{Cs}\}dsdz\\
&\leq C\int_{|z-x|>\epsilon}(|x_1-z|^2+|x_2-z|^2)^{-d+1}\int_0^{\frac{R}{|x_1-z|^2+|x_2-z|^2}}u^{-d}\exp\{-\frac{1}{Cu}\}dudz\\
&\leq C\int_{|z-x|>\epsilon}(|x_1-z|^2+|x_2-z|^2)^{-d+1}\int_0^{\infty}u^{-d}\exp\{-\frac{1}{Cu}\}dudz\\
&\leq C\int_{|z-x|>\epsilon}((|z-x|-\frac{\epsilon}{2})^2)^{-d+1}dz\\
&\leq C\int_{\epsilon}^\infty r^{d-1}((r-\frac{\epsilon}{2})^2)^{-d+1}dr\\
&\leq C\int_{\epsilon}^\infty r^{d-1}((r)^2)^{-d+1}dr\\
&\leq C\epsilon^{2-d},
\end{align*}
as required. 

Combining (\ref{eqn2}) and (\ref{eqn3}) we can estimate the integral on the right hand side of (\ref{eqn1}) by
\begin{align*}
&\big |\int_T^R\int_{\mathbb{R}^d}p_1(T,X^1_T;s,z)p_2(T,X_T^2;s,z)g(s,z)K(z)dzds\\
&-g(T,X(T))\int_T^R\int_{\mathbb{R}^d}p_1(T,X^1_T;s,z)p_2(T,X_T^2;s,z)K(z)dzds\big |\\
&\leq C\epsilon^{2-d}+\phi_g(\epsilon)\int_T^R\int_{\mathbb{R}^d}p_1(T,X^1_T;s,z)p_2(T,X_T^2;s,z)K(z)dzds,\numberthis\label{eqn4}
\end{align*}
provided $T<R$. So, we will now aim to estimate the value of
\begin{displaymath}
\int_T^R\int_{\mathbb{R}^d}p_1(T,X^1_T;s,z)p_2(T,X_T^2;s,z)K(z)dzds.
\end{displaymath}
By the same argument as earlier, we have
\begin{align*}
&\int_{0}^{\epsilon^2}\int_{\mathbb{R}^d}p_1(0,x_1;s,z)p_2(0,x_2;s,z)a(z)dzds\\
&=\lim_{h\rightarrow 0}V(h)^{-1}\mathbb{E}(\int_{0}^{\epsilon^2}\mathbf{1}_{|Y_{A(s)}|<h}dA(s))\\
&\geq\int_{0}^{R^{-1}\epsilon^2}\rho(s,0)ds,
\end{align*}
and
\begin{displaymath}
\int_{0}^{\epsilon^2}\int_{\mathbb{R}^d}p_1(0,x_1;s,z)p_2(0,x_2;s,z)a(z)dzds\leq\int_{0}^{2R\epsilon^2}\rho(s,0)ds.
\end{displaymath}
Next, we will use Theorem $\ref{drift}$ to approximate $\rho(t,0)$ for small $t$. Let $q$ be the transition density of a standard Brownian motion in $\mathbb{R}^d$, and suppose $2|x_1-x_2|<\epsilon$, then for $t\leq R\epsilon^2$ we can use similar derivation as in (\ref{eqn0}) to obtain
\begin{align*}
\rho(t,0)&\geq\frac{1}{(2\pi t)^{\frac{d}{2}}}\prod_{i=1}^d[e^{-\frac{(|x_1^i-x_2^i|/\sqrt{t}+C\sqrt{t})^2}{2}}(1-C\sqrt{t})]\\
&\geq \frac{1}{(2\pi t)^{\frac{d}{2}}}\prod_{i=1}^de^{-\frac{(|x_1^i-x_2^i|/\sqrt{t})^2}{2}}e^{-\frac{2C(|x_1^i-x_2^i|)+C^2t}{2}}(1-C\epsilon)\\
&\geq \frac{1}{(2\pi t)^{\frac{d}{2}}}\prod_{i=1}^d[e^{-\frac{(|x_1^i-x_2^i|/\sqrt{t})^2}{2}}](1-C\epsilon)\\
&\geq q(0,x_1-x_2;t,0)(1-C\epsilon),
\end{align*}
and 
\begin{align*}
\rho(t,0)&\leq\frac{1}{(2\pi t)^{\frac{d}{2}}}\prod_{i=1}^d[e^{-\frac{(|x_1^i-x_2^i|/\sqrt{t}-C\sqrt{t})^2}{2}}(1+C\sqrt{t})]\\
&\leq \frac{1}{(2\pi t)^{\frac{d}{2}}}\prod_{i=1}^de^{-\frac{(|x_1^i-x_2^i|/\sqrt{t})^2}{2}}e^{\frac{2C(|x_1^i-x_2^i|)-C^2t}{2}}(1+C\epsilon)\\
&\leq \frac{1}{(2\pi t)^{\frac{d}{2}}}\prod_{i=1}^d[e^{-\frac{(|x_1^i-x_2^i|/\sqrt{t})^2}{2}}](1+C\epsilon)\\
&\leq q(0,x_1-x_2;t,0)(1+C\epsilon).
\end{align*}
Note that
\begin{displaymath}
\int_0^{\infty}q(0,x_1-x_2;s,0)ds=\int_0^\infty\frac{1}{(2\pi t)^{\frac{d}{2}}}e^{-\frac{|x_1-x_2|^2}{2t}}dt.
\end{displaymath}
We now make the substitution $u=\frac{t}{|x_1-x_2|^2}$ and recall that
\begin{displaymath}
\frac{1}{c_d}=\int_0^\infty\frac{1}{(2\pi t)^{\frac{d}{2}}}e^{-\frac{1}{2t}}dt
\end{displaymath}
to obtain
\begin{align*}
&\int_0^\infty\frac{1}{(2\pi t)^{\frac{d}{2}}}e^{-\frac{|x_1-x_2|^2}{2t}}dt\\
&=|x_1-x_2|^{2-d}\int_0^\infty\frac{1}{(2\pi u)^{\frac{d}{2}}}e^{-\frac{1}{2u}}du\\
&=\frac{1}{c_d}|x_1-x_2|^{2-d}.
\end{align*}
We also know that
\begin{displaymath}
 \int_{R^{-1}\epsilon^2}^{\infty}q(0,x_1-x_2;s,0)ds\leq \int_{R^{-1}\epsilon^2}^\infty\frac{1}{(2\pi s)^{\frac{d}{2}}}ds\leq C\epsilon^{2-d}, 
\end{displaymath}
and thus we have for $\epsilon^2\leq t\leq R$,
\begin{align*}
 &|x_1-x_2|^{2-d}-c_d\int_{0}^{t}\int_{\mathbb{R}^d}p_1(0,x_1;s,z)p_2(0,x_2;s,z)a(z)dzds\\
&\leq |x_1-x_2|^{2-d}-c_d\int_{0}^{\epsilon^2}\int_{\mathbb{R}^d}p_1(0,x_1;s,z)p_2(0,x_2;s,z)a(z)dzds\\
&\leq |x_1-x_2|^{2-d}-c_d\int_{0}^{R^{-1}\epsilon^2}\rho(s,0)ds\\
&\leq |x_1-x_2|^{2-d}-(1-C\epsilon)c_d\int_{0}^{R^{-1}\epsilon^2}q(0,x_1-x_2;s,0)ds\\
&\leq |x_1-x_2|^{2-d}-(1-C\epsilon)c_d[\int_{0}^{\infty}q(0,x_1-x_2;s,0)ds-\int_{R^{-1}\epsilon^2}^{\infty}q(0,x_1-x_2;s,0)ds]\\
&\leq |x_1-x_2|^{2-d}-(1-C\epsilon)c_d[\frac{|x_1-x_2|^{2-d}}{c_d}-C\epsilon^{2-d}]\\
&\leq C\epsilon^{2-d}+C\epsilon|x_1-x_2|^{2-d}.
\end{align*}

By a similar method we can obtain for $\epsilon^2\leq t\leq R$,
\begin{align*}
 &c_d\int_{0}^{t}\int_{\mathbb{R}^d}p_1(0,x_1;s,z)p_2(0,x_2;s,z)a(z)dzds-|x_1-x_2|^{2-d}\\
&\leq c_d\int_{0}^{\epsilon^2}\int_{\mathbb{R}^d}p_1(0,x_1;s,z)p_2(0,x_2;s,z)a(z)dzds-|x_1-x_2|^{2-d}\\&+c_d\int_{\epsilon^2}^{R}\int_{\mathbb{R}^d}p_1(0,x_1;s,z)p_2(0,x_2;s,z)a(z)dzds\\
&\leq c_d\int_{0}^{2R\epsilon^2}\rho(s,0)ds-|x_1-x_2|^{2-d}+C\epsilon^{2-d}\\
&\leq c_d(1+C\epsilon)\int_{0}^{2R\epsilon^2}q(0,x_1-x_2;s,0)ds-|x_1-x_2|^{2-d}+C\epsilon^{2-d}\\
&\leq c_d(1+C\epsilon)\int_{0}^{\infty}q(0,x_1-x_2;s,0)ds-|x_1-x_2|^{2-d}+C\epsilon^{2-d}\\
&\leq c_d(1+C\epsilon)\frac{|x_1-x_2|^{2-d}}{c_d}-|x_1-x_2|^{2-d}+C\epsilon^{2-d}\\
&\leq C\epsilon|x_1-x_2|^{2-d}+C\epsilon^{2-d}.
\end{align*}

From these combined with (\ref{eqn2}) and (\ref{eqn3}) , we will have that if $|x_1-x|\leq \frac{\epsilon}{2}$ and $|x_2-x|\leq \frac{\epsilon}{2}$, then for $\epsilon^2\leq t\leq R$,
\begin{align*}
&\abs{g(0,x)|x_1-x_2|^{2-d}-c_d\int_0^{t}\int_{\mathbb{R}^d}p_1(0,x_1;s,z)p_2(0,x_2;s,z)a(z)g(s,z)dzds}\\
&\leq |g(0,x)|(\big| |x_1-x_2|^{2-d}-c_d\int_{0}^{t}\int_{\mathbb{R}^d}p_1(0,x_1;s,z)p_2(0,x_2;s,z)a(z)dzds\big|)\\
&+\big|c_d\int_0^{t}\int_{\mathbb{R}^d}p_1(0,x_1;s,z)p_2(0,x_2;s,z)a(z)|g(s,z)-g(0,x)|dzds\big|\\
&\leq\|g\|(C\epsilon^{2-d}+C\epsilon|x_1-x_2|^{2-d})+\big|c_d\int_0^{\epsilon^2}\int_{|z-x|\leq\epsilon}p_1(0,x_1;s,z)p_2(0,x_2;s,z)a(z)|g(s,z)-g(0,x)|dzds\big|\\
&+\big|c_d\int_0^{\epsilon^2}\int_{\mathbb{R}^d}p_1(0,x_1;s,z)p_2(0,x_2;s,z)a(z)|g(s,z)-g(0,x)|dzds\big|\\
&+\big|c_d\int_0^{t}\int_{|z-x|\geq\epsilon}p_1(0,x_1;s,z)p_2(0,x_2;s,z)a(z)|g(s,z)-g(0,x)|dzds\big|\\
&\leq\|g\|(C\epsilon^{2-d}+C\epsilon|x_1-x_2|^{2-d})+\phi_g(\epsilon)c_d\int_0^{t}\int_{\mathbb{R}^d}p_1(0,x_1;s,z)p_2(0,x_2;s,z)a(z)dzds\\
&\leq\|g\|(C\epsilon^{2-d}+C\epsilon|x_1-x_2|^{2-d})+\phi_g(\epsilon)|x_1-x_2|^{2-d}.
\end{align*}
Recall $K(z)=c_da(z)r^{d-2}$ and on the event $\{T<R\}$ we have $X^1_T-X^2_T=rN^{-\frac{1}{d-2}}$. Therefore, 
\begin{align*}\label{stop}\numberthis
 &|Ng(T,X(T))-\int_T^{R}\int_{\mathbb{R}^d} K(z)p_1(T,X^1_T;s,z)p_2(T,X^2_T;s,z)g(s,z)dzds|\\
&=r^{d-2}\big|g(T,X(T))|X_T^1-X^2_T|^{2-d}-c_d\int_T^{R}\int_{\mathbb{R}^d} a(z)p_1(T,X^1_T;s,z)p_2(T,X^2_T;s,z)g(s,z)dzds\big|\\
&=r^{d-2}\big|g(T,X(T))|X_T^1-X^2_T|^{2-d}-c_d\int_0^{(R-T)}\int_{\mathbb{R}^d} a(z)p_1(0,X^1_T;s,z)p_2(0,X^2_T;s,z)g(s,z)dzds\big|\\
&\leq C[\epsilon^{2-d}\|g\|+N(\phi_g(\epsilon)+\epsilon)].
\end{align*}
Recall (\ref{eqn1}), we have 
\begin{align*}
 &|N\mathbb{E}(g(T,X(T))\mathbf{1}_{T<R})-\int_0^R\int_{\mathbb{R}^d} K(z)p_1(0,x_1;s,z)p_2(0,x_2;s,z)g(s,z)dzds|\numberthis\label{eqnnn}\\
&\leq C[\epsilon^{2-d}\|g\|+N(\epsilon+\phi_g(\epsilon))]\mathbb{P}(T\leq R).
\end{align*}
Now, we will give a bound on $\mathbb{P}(T\leq R)$. We have
\begin{align*}
 &\int_{0}^{2R}\int_{\mathbb{R}^d}p_1(0,x_1;s,z)p_2(0,x_2;s,z)a(z)dzds\\
&=\mathbb{E}[\int_{T}^{2R}\int_{\mathbb{R}^d}p_1(T,X^1_T;s,z)p_2(T,X_T^2;s,z)a(z)dzds]\\
&\geq \mathbb{E}[\mathbf{1}_{T<R}\int_{T}^{T+R}\int_{\mathbb{R}^d}p_1(T,X^1_T;s,z)p_2(T,X_T^2;s,z)a(z)dzds].
\end{align*}
Using the result from Theorem \ref{aronson} we would have
\begin{align*}
&\int_{T}^{T+R}p_1(T,X^1_T;s,z)p_2(T,X_T^2;s,z)a(z)dzds\\
&\geq \frac{1}{C}\int_{\mathbb{R}^d}\int_0^Rs^{-d}\exp\{-C\frac{|X^1_T-z|^2+|X^2_T-z|^2}{s}\}dsdz\\
&\geq \frac{(rN^{-1/(d-2)})^{2-d}}{C}\geq\frac{N}{C}.
\end{align*}
Therefore, we have
\begin{displaymath}
 N\mathbb{P}( T\leq R)\leq C\int_{0}^{2R}\int_{\mathbb{R}^d}p_1(0,x_1;s,z)p_2(0,x_2;s,z)a(z)dzds.
\end{displaymath}
Now, using Theorem \ref{aronson} and similar argument as earlier, we obtain
\begin{displaymath}
\int_{0}^{2R}\int_{\mathbb{R}^d}p_1(0,x_1;s,z)p_2(0,x_2;s,z)a(z)dzds.
\leq C\int_{0}^{4R^2}\rho(s,0)ds\leq C\int_{0}^{4R^2}s^{-\frac{d}{2}}e^{-\frac{|x_1-x_2|^2}{Cs}}ds.
\end{displaymath}
By substituting $u=\frac{Cs}{|x_1-x_2|^2}$, we have
\begin{align*}
&C\int_{0}^{4R^2}s^{-\frac{d}{2}}e^{-\frac{|x_1-x_2|^2}{Cs}}ds\\
&\leq C|x_1-x_2|^{2-d}\int_{0}^{4R^2}u^{-\frac{d}{2}}e^{\frac{-1}{u}}du\\&
\leq C|x_1-x_2|^{2-d}.
\end{align*}
So, we conclude that
\begin{displaymath}
\mathbb{P}( T\leq R)\leq \frac{C}{N}(|x_1-x_2|)^{2-d}.
\end{displaymath}
Plugging this into (\ref{eqnnn}), we obtain 
\begin{align*}
 &|N\mathbb{E}(g(T,X(T))\mathbf{1}_{T<R})-\int_0^R\int_{\mathbb{R}^d} K(z)p_1(T,X^1_T;s,z)p_2(T,X^2_T;s,z)g(s,z)dzds|\\
&\leq C[\epsilon^{2-d}\|g\|/N+\epsilon+\phi_g(\epsilon)](|x_1-x_2|)^{2-d},
\end{align*}
as desired.
\end{proof}

\subsection{Application}
As an application, we now prove Corollary \ref{period}.

\begin{proof}
We first look at the limit
\begin{displaymath}
\lim_{\lambda\rightarrow 0}\lim_{N\rightarrow\infty}N\mathbb{E}(g(T,X^\lambda_T)\mathbf{1}_{T<R}).
\end{displaymath}
For fixed $\lambda$, Theorem $\ref{diff}$ provides us the limit of $N\mathbb{E}(g(T,X^\lambda_T)\mathbf{1}_{T<R}$ as $N\rightarrow\infty$. Then we can use homogenization results for Brownian motions under periodic drift to find the limit as $\lambda\rightarrow 0$. 

For $i=1,2$, let $p^{\lambda}_i$ be the transition density of $X^\lambda_i$. It is known that
\begin{displaymath}
p^{\lambda}_i(0,x_1;s,z)\rightarrow p_i(0,x_1;s,z)
\end{displaymath} 
pointwise as $\lambda\rightarrow 0$, see \cite{JKO} for example. On the other hand, Theorem \ref{diff} tells us that \begin{displaymath}
\lim_{N\rightarrow\infty}|N\mathbb{E}(g(T,X^\lambda_T)\mathbf{1}_{T<R})-\int_0^R\int_{\mathbb{R}^d}Kp^\lambda_1(0,x_1;s,z)p^\lambda_2(0,x_2;s,z)g(s,z)dzds|\rightarrow 0.
\end{displaymath}
Now, we want to prove that 
\begin{displaymath}
\int_0^R\int_{\mathbb{R}^d}p^{\lambda}_1(0,x_1;s,z)p^{\lambda}_2(0,x_2;s,z)g(s,z)dzds\rightarrow \int_0^R\int_{\mathbb{R}^d}p_1(0,x_1;s,z)p_2(0,x_2;s,z)g(s,z)dzds,
\end{displaymath}
as $\lambda\rightarrow 0$. We know that
\begin{displaymath}
p^{\lambda}_1(0,x_1;s,z)p^{\lambda}_2(0,x_2;s,z)g(s,z)\rightarrow p_1(0,x_1;s,z)p_2(0,x_2;s,z)g(s,z)
\end{displaymath}
pointwise as $\lambda\rightarrow 0$. The results in $\cite{Norris}$ suggests that, for $t\leq R$, 
\begin{displaymath}
p^\lambda_i(0,x;t,y)\leq Ct^{-d/2}\exp\{-|y-x|^2/Ct\},
\end{displaymath}
for some constant $C$ independent of $\lambda$. So, we have
\begin{align*}
p^{\lambda}_1(0,x_1;s,z)p^{\lambda}_2(0,x_2;s,z)g(s,z)&\leq Ct^{-d}\exp\{-(|z-x_1|^2+|z-x_2|^2)/Ct\}\\
&\leq Ct^{-d}\exp\{-(|z-\frac{x_1+x_2}{2}|^2+\frac{|x_1-x_2|^2}{2})/Ct\}.
\end{align*}
Therefore, 
\begin{align*}
&\int_{\mathbb{R}^d}Cs^{-d}\exp\{(-|z-x_1|^2-|z-x_2|^2)/Cs\}dz\\
&\leq \int_{\mathbb{R}^d}Cs^{-d}\exp\{-(|z-\frac{x_1+x_2}{2}|^2+\frac{|x_1-x_2|^2}{2})/Cs\}dz\\
&\leq e^{-\frac{|x_1-x_2|^2}{Cs}}\int_{\mathbb{R}^d}Cs^{-d}\exp\{-(|z-\frac{x_1+x_2}{2}|^2)/Cs\}dz\\
&\leq Cs^{-d/2}e^{-\frac{|x_1-x_2|^2}{Cs}}.
\end{align*}
Now, because $\lim_{s\rightarrow 0}Cs^{-d/2}e^{-\frac{|x_1-x_2|^2}{Cs}}=0$ and $Cs^{-d/2}e^{-\frac{|x_1-x_2|^2}{Cs}}$ is continuous in $s\geq 0$, we know that 
\begin{displaymath}
\sup_{0\leq s\leq R}\int_{\mathbb{R}^d}Cs^{-d}\exp\{(-|z-x_1|^2-|z-x_2|^2)/Cs\}dz<\infty,
\end{displaymath}
and thus
\begin{displaymath}
\int_0^R\int_{\mathbb{R}^d}Cs^{-d}\exp\{(-|z-x_1|^2-|z-x_2|^2)/Cs\}dzds<\infty.
\end{displaymath}
Then, by dominated convergence theorem, we obtain 
\begin{displaymath}
\int_0^R\int_{\mathbb{R}^d}p^{\lambda}_1(0,x_1;s,z)p^{\lambda}_2(0,x_2;s,z)g(s,z)dzds\rightarrow \int_0^R\int_{\mathbb{R}^d}p_1(0,x_1;s,z)p_2(0,x_2;s,z)g(s,z)dzds.
\end{displaymath}
Therefore, we can conclude
\begin{displaymath}
\lim_{\lambda\rightarrow 0}\lim_{N\rightarrow\infty}|N\mathbb{E}(g(T,X^\lambda_T)\mathbf{1}_{T<R})-\int_0^R\int_{\mathbb{R}^d}Kp_1(0,x_1;s,z)p_2(0,x_2;s,z)g(s,z)dzds|\rightarrow 0,
\end{displaymath}
as desired.

Now, we consider the limit
\begin{displaymath}
\lim_{N\rightarrow\infty}\lim_{\lambda\rightarrow 0}N\mathbb{E}(g(T,X^\lambda_T)\mathbf{1}_{T<R}).
\end{displaymath}
It is known \cite{N13} that the processes $X^\lambda_i$ converge weakly to $X^i$ with $X^i$ Brownian motions with diffusivities $\bar{a}_i$. We can view $g\mathbf{1}_{T<R}$ as a functional on the processes $X^1$ and $X^2$. Moreover, $g$ is continuous at $(X^1,X^2)$ unless one of the following events happens:

(i) $T=R$

(ii) $X^1$ or $X^2$ is not continuous

(iii)$T<R$ and there exists $\epsilon>0$ such that $\inf_{T\leq t\leq T+\epsilon}|X^1_t-X^2_t|=rN^{-1/(d-2)}$.

Because all these events happen with probability $0$, we know that $g(T,X(T))$ is a bounded  and almost everywhere continuous functional on $X^1$ and $X^2$. Therefore, by weak convergence we have 
\begin{displaymath}
\lim_{\lambda\rightarrow 0}\mathbb{E}(g(T,X^\lambda_T))=\mathbb{E}(g(\bar{T},X_{\bar{T}})),
\end{displaymath}
where $\bar{T}$ is the collision time of two Brownian particles with diffusivities $\bar{a}_1$ and $\bar{a}_2$ respectively and $X_{\bar{T}}$ is their centre of mass at time $\bar{T}$. Using Theorem \ref{diff} again we have
\begin{align*}
 &\lim_{N\rightarrow\infty}\lim_{\lambda\rightarrow 0}|N\mathbb{E}(g(T,X^\lambda_T)\mathbf{1}_{T<R})-\int_0^R\int_{\mathbb{R}^d} \bar{K}p_1(0,x_1;s,z)p_2(0,x_2;s,z)g(s,z)dzds|\rightarrow 0,
\end{align*}
as required.
\end{proof}

\section{Estimates for Ornstein-Uhlenbeck particles}
We consider in this section two particles starting from $x_1$ and $x_2$. For $i=1,2$ let $V_i^N$ and $X_i^N$ be their velocities and positions respectively and we assume they are modelled by the Ornstein-Uhlenbeck processes satisfying
\begin{align*}
 dV^N_i(t)&=Nb_idB^i_t-N\tau_iV^N_idt,\\
 dX^N_i(t)&=V^N_i(t)dt.
\end{align*}
As $N$ tends to infinity, the position $X^N_i$ converges weakly to a Brownian motion and the rate of the convergence is fixed. So, if we let the radii of the particles decrease slowly enough with $N$, the collision will happen in similar way as in the Brownian case. However, if we consider the case when the radii of the particles decrease sufficiently fast, then when the two particles come close, they are likely to move away from each other with almost constant velocities. We will exploit this to find the collision estimates. In the Brownian case, when two particles come close to each other, they are likely to stay around for a bit longer and will thus have more chance to collide. As a result, the scale of the collision rate will be smaller in the Ornstein-Uhlenbeck case. The aim of this section is to investigate the collision distribution for the Ornstein-Uhlenbeck particles and compare the result with the Brownian case.

\subsection{Proof of Theorem \ref{ou}}
Our strategy for proving Theorem \ref{ou} is to first consider the case where the particles just continue their free motions after they collide and allow them to recollide later. We will divide the time interval $[t_0,t_1]$ into little time intervals, so that in each little interval the velocities of the particles are unlikely to change much. Then, we can make good predictions about whether and where the particles are going to collide in a time interval based on their positions and velocities at the start of the interval. Also, we know the distribution of positions and velocities of the particles at any time, so we can estimate the distribution of the time and place where the particles collide. Then, we will show that allowing the particles to recollide won't change our estimation by much because the particles are very unlikely to collide more than once anyway. This is because after the particles collide, they are likely to continue their free motions with almost constant velocities for a small amount of time and this time turns out to be enough for them to get far away from each other so that they are unlikely to collide again. Now, we will start our proof.
\begin{proof} 
We shall write $C$ as a constant, and $C^N$ as a sequence of constants such that $C^N\rightarrow 0$ as $N\rightarrow\infty$. We allow the values of $C$ and $C^N$ to change from line to line. We know that when $t_1\geq t\geq t_0$, $(V^N_i,X^N_i)$ are bivariate normally distributed with 
\begin{align*}
 &Var(V^N_i)= \frac{Nb^2_i}{2\tau_i}(1-e^{-2N\tau_it}),\\
&Var(X^N_i)=\frac{b_i^2}{\tau_i^2}(t-\frac{2-2e^{-N\tau_it}}{N\tau_i}+\frac{1-e^{-2N\tau_it}}{2N\tau_i})\\
&Cov(X^N_i,V^N_i)=\frac{b_i^2}{2\tau^2_i}(1-2e^{-N\tau_it}+e^{-2N\tau_it}).
\end{align*}
As an approximation, we have 
\begin{align*}
 &|Var(V^N_i)-\frac{Nb^2_i}{2\tau_i}|\leq \frac{C}{N},\\
&|Var(X^N_i)-\frac{b_i^2t}{\tau_i^2}|\leq\frac{C}{N},\\
&|Cov(X^N_i,V^N_i)-\frac{b_i^2}{2\tau^2_i}|\leq\frac{C}{N}.
\end{align*}
Now, we choose a constant $\epsilon>0$ depending on $\alpha$, which is sufficiently small for all needs in the remaining of the proof. Let $k=\frac{1}{2}-\epsilon$, $\beta=\frac{1}{2}-2\epsilon$ and $m=\frac{1}{2}+\epsilon$. Then we can choose a constant $\lambda$ such that
\begin{displaymath}
2(k-1)>\lambda>\frac{2}{9}(m-2\alpha-4).
\end{displaymath}
Let $h_N=\frac{(t_1-t_0)}{\lfloor N^{\beta}/r_N\rfloor}$ and  $t^N_i=ih_N-h_N$. We subdivide $(t_0,t_1]$ into $S^N_1,S^N_2,...,S^N_{\lfloor N^{\beta}/r_N\rfloor}$ where $S^N_i=(t^N_i,t^N_{i+1}]$. 
Let $A_i^N$ be the event that $|X_1^N(t)-X^N_2(t)|\leq r_N$ for some $t\in S^N_i$ but $|X_1^N(t^N_i)-X^N_2(t^N_i)|> r_N$. So, $A_i^N$ can be understood as the event the particles collide during $S^N_i$. We let $B^N_i$ be the following event
\begin{align*}
B^N_i=&\{|V_1^N(t^N_i)-V_2^N(t^N_i)|>N^k\}\cap\{\max\{|V_1^N(t^N_i)|,|V_2^N(t^N_i)|\}<N^{m}\}\\
&\cap\{|X_1^N(t^N_i)-X^N_2(t^N_i)|> r_N\}\\
&\cap\{\exists 0\leq t\leq h_N:|X_1^N(t^N_i)-X^N_2(t^N_i)+t(V_1^N(t^N_i)-V_2^N(t^N_i))|\leq r_N\}.
\end{align*}
So, $B^N_i$ is the event that at the start of $S^N_i$, the particles' speeds are not too fast, their relative speed is not too slow and they would collide if their relative velocity doesn't change during $S^N_i$. The event $B^N_i$ can be determined by $V_i^N(t^N_i)$ and $X_i^N(t^N_i)$ and we want to use $B^N_i$ to approximate $A^N_i$ and estimate the probability of $B^N_i$ happening. Informally, for technical reasons, as the typical speeds of the particles are of order $\sqrt{N}$, we want to ignore the probability that either $\{|V_1^N(t^N_i)-V_2^N(t^N_i)|>N^k\}$ or $\max\{|V_1^N(t^N_i)|,|V_2^N(t^N_i)|\}<N^{m}\}$ happens. Moreover, $S^N_i$ is a small time interval during which the velocities of the particles are unlikely to change much, and thus we want to approximate $A^N_i$ by
\begin{displaymath}
\{|X_1^N(t^N_i)-X^N_2(t^N_i)|> r_N\}
\cap\{\exists 0\leq t\leq h_N:|X_1^N(t^N_i)-X^N_2(t^N_i)+t(V_1^N(t^N_i)-V_2^N(t^N_i))|\leq r_N\}.
\end{displaymath}
We will start by estimating the probability that $B^N_i$ happens. For $v\in\mathbb{R}^d$, let 
\begin{displaymath}
 D^N(v)=\{x\in\mathbb{R}^d:|x|\geq r_N\}\cap\{\exists 0\leq t\leq h_N:|x+tv|\leq r_N\}.
\end{displaymath}
 Note that
\begin{displaymath}\numberthis\label{21}
 Vol(D^N(v))=|v|(r_N)^{d-1}Vol(S_{d-1})h_N,
\end{displaymath}
where $S_{d-1}$ is the $d-1$ dimensional sphere with radius $1$. Also, we have 
\begin{displaymath}
\sup_{|v|<N^m}(\sup_{x\in D^N_v}|x|)\leq CN^{m-\beta}.
\end{displaymath}
Now, let $\bar{V}^N_i=N^{-\frac{1}{2}}V^N_i$, and let $p^N_i$ be the transition density of $(\bar{V}^N_i,X^N_i)$. Then we have $(\bar{V}^N_i,X^N_i)$ is bivariate normally distributed with 
\begin{align*}
 &|Var(\bar{V}^N_i)-\frac{b^2_i}{2\tau_i}|\leq\frac{C}{N},\\
&|Var(X^N_i)-\frac{b_i^2t}{\tau_i^2}|\leq\frac{C}{N},\\
&|Cov(X^N_i,\bar{V}^N_i)|\leq\frac{C}{\sqrt{N}}.
\end{align*}
So, we know the limiting distribution of $(\bar{V}^N_i,X^N_i)$. Let $H^N=\{v,u\in\mathbb{R}^d:N^{k-\frac{1}{2}}<|v-u|;|v|,|u|<N^{m-\frac{1}{2}}\}$, we have
\begin{displaymath}\numberthis\label{22}
 \mathbb{P}(B_i^N)=\int_{H^N}\int_{\mathbb{R}^d}\int_{y-z\in D^N(N^{\frac{1}{2}}(u-v))}p^N_1(0,0,x_1;t^N_i,u,y)p^N_2(0,0,x_2;t^N_i,v,z)dydzdvdu.
\end{displaymath}
Let $f_i$ denote the probability density function of a normal random variable in $\mathbb{R}^d$ with mean zero and variance $\frac{b^2_i}{2\tau_i}$ and let $f$ be the probability density function of a normal random variable with mean zero and variance $\frac{b^2_1}{2\tau_1}+\frac{b^2_2}{2\tau_2}$. Then we have
\begin{align*}
 &(1+CC^N)q_2(0,x_2;t^N_i(1+C^N),z)f_2(\frac{v}{1+C^N})\\
&\geq p^N_2(0,0,x_2;t^N_i,v,z)\\
&\geq (1-CC^N)q_2(0,x_2;t^N_i(1-C^N),z)f_2(\frac{v}{1-C^N}).
\end{align*}
Now, assume without loss of generality that $x_1=0$, then for $\{u,v\}\in H^N$ and $y-z\in D^N(N^{\frac{1}{2}}(u-v))$ we have
\begin{align*}
 &(1+CC^N)q_1(0,0;t^N_i(1+C^N),z\frac{(|z|-C^N)^+}{|z|})f_1(\frac{u}{1+C^N})\\
&\geq p^N_1(0,0,0;t^N_i,u,y)\\
&\geq (1-CC^N)q_1(0,0;t^N_i(1-C^N),z\frac{(|z|+C^N)}{|z|})f_1(\frac{u}{1-C^N}).
\end{align*}
 Combining these inequalities with (\ref{21}) and (\ref{22}), we have
\begin{align*}
 \mathbb{P}(B_i^N)&=\int_{H^N}\int_{\mathbb{R}^d}\int_{y-z\in D^N(N^{\frac{1}{2}}(u-v))}p^N_1(0,0,x_1;t^N_i,u,y)p^N_2(0,0,x_2;t^N_i,v,z)dydzdvdu\\
&\leq \int_{H^N}\int_{\mathbb{R}^d}Vol(D^N(N^{\frac{1}{2}}(u-v)))(1+CC^N)q_1(0,0;t^N_i(1+C^N),z\frac{(|z|-C^N)^+}{|z|})f_1(\frac{u}{1+C^N})\\
&\cdot (1+CC^N)q_2(0,x_2;t^N_i(1+C^N),z)f_2(\frac{v}{1+C^N})dzdvdu\\
&\leq \int_{H^N}\int_{\mathbb{R}^d}|N^{\frac{1}{2}}(u-v)|(r_N)^{d-1}Vol(S_{d-1})h_N(1+C^N)q_1(0,0;t^N_i(1+C^N),z\frac{(|z|-C^N)^+}{|z|})f_1(\frac{u}{1+C^N})\\
&\cdot (1+CC^N)q_2(0,x_2;t^N_i(1+C^N),z)f_2(\frac{v}{1+C^N})dzdvdu\\
&\leq N^{\frac{1}{2}}(r_N)^{d-1}Vol(S_{d-1})h_N(1+CC^N)\int_{H^N}|u-v|f_1(\frac{u}{1+C^N})f_2(\frac{v}{1+C^N})dudv\\
&\cdot\int_{\mathbb{R}^d}q_1(0,0;t^N_i(1+C^N),z\frac{(|z|-C^N)^+}{|z|})
 q_2(0,x_2;t^N_i(1+C^N),z)dz.
\end{align*}
Recall that $h_N=\frac{t_1-t_0}{\lfloor N^\beta/r_N\rfloor}$, we have
\begin{align*}\numberthis\label{23}
 &(r_N)^{-d}N^{-\frac{1}{2}+\beta}\mathbb{P}(B_i^N)\\
&\leq Vol(S_{d-1})(t_1-t_0)\int_{H^N}|u-v|f_1(\frac{u}{1+C^N})f_2(\frac{v}{1+C^N})dvdu\\
&\cdot \int_{\mathbb{R}^d}(1+CC^N)q_2(0,x_2;t^N_i(1+C^N),z)q_1(0,0;t^N_i(1+C^N),z\frac{(|z|-C^N)^+}{|z|})dz.
\end{align*}
Now, note that for $N$ sufficiently large, we have
\begin{displaymath}
 \mathbf{1}_{(u,v)\in H^N}|u-v|f_1(\frac{u}{1+C^N})f_2(\frac{v}{1+C^N})\leq |u-v|f_1(\frac{u}{2})f_2(\frac{v}{2}),
\end{displaymath}
which is integrable over $\mathbb{R}^d\times\mathbb{R}^d$ and also
\begin{displaymath}
 \mathbf{1}_{(u,v)\in H^N}|u-v|f_1(\frac{u}{1+C^N})f_2(\frac{v}{1+C^N})\rightarrow |u-v|f_1(u)f_2(v)
\end{displaymath}
pointwise. Thus, by dominated convergence theorem, we have
\begin{align*}\numberthis\label{24}
 &\int_{H^N}|u-v|f_1(\frac{u}{1+C^N})f_2(\frac{v}{1+C^N})dvdu\\
&\leq \int_{\mathbb{R}^d}\int_{\mathbb{R}^d}|u-v|f_1(u)f_2(v)dvdu+C^N\\
&\leq \int_{\mathbb{R}^d}|v|f(v)dv+C^N.
\end{align*}
Because $f$ is the probability density function of a normal random variable in $\mathbb{R}^d$ with variance $\frac{b_1^2}{2\tau_1}+\frac{b_1^2}{2\tau_2}$, $\int_{\mathbb{R}^d}|v|f(v)dv$ equals to $\sqrt{\frac{b_1^2}{2\tau_1}+\frac{b_1^2}{2\tau_2}}$ multiplied by the expected norm of an $\mathbb{R}^d$ standard normal random vector.
On the other hand, for $t_0\leq t\leq t_1$, we have
\begin{displaymath}
(1+CC^N)q_2(0,x_2;t(1+C^N),z)q_1(0,0;t(1+C^N),z\frac{(|z|-C^N)^+}{|z|})\rightarrow q_2(0,x_2;t,z)q_1(0,0;t,z)
\end{displaymath}
uniformly over $t$ and $z$. Moreover, there exists a constant $c$ such that whenever $|z|>c$, we have
\begin{align*}
&(1+CC^N)q_2(0,x_2;t(1+C^N),z)q_1(0,0;t(1+C^N),z\frac{(|z|-C^N)^+}{|z|})\\
&\leq (1+CC^N)q_2(0,x_2;t_1,z/2)q_1(0,0;t_1,z/2),
\end{align*}
which is integrable over $\mathbb{R}^d$.
Again by dominated convergence theorem, we have
\begin{align*}\numberthis\label{25}
&\int_{|z|>c}(1+CC^N)q_2(0,x_2;t^N_i(1+C^N),z)q_1(0,0;t^N_i(1+C^N),z\frac{(|z|-C^N)^+}{|z|})dz\\
&\leq\int_{|z|>c}q_2(0,x_2;t^N_i,z)q_1(0,0;t^N_i,z)dz+C^N.
\end{align*}
By uniform convergence, we have
\begin{align*}\numberthis\label{26}
 &\int_{|z|\leq c}(1+CC^N)q_2(0,x_2;t^N_i(1+C^N),z)q_1(0,0;t^N_i(1+C^N),z\frac{(|z|-C^N)^+}{|z|})dz\\
&\leq\int_{|z|\leq c}q_2(0,x_2;t^N_i,z)q_1(0,0;t^N_i,z)dz+C^N.
\end{align*}
So, with (\ref{23}), (\ref{24}),(\ref{25}) and (\ref{26}) we can deduce.
\begin{align*}
  &(r_N)^{-d}N^{-\frac{1}{2}+\beta}\mathbb{P}(B_i^N)\\
&\leq Vol(S_{d-1})(t_1-t_0)(\int_{\mathbb{R}^d}|v|f(v)dvdu+C^N)(\int_{\mathbb{R}^d}q_2(0,x_2;t^N_i,z)q_1(0,0;t^N_i,z)dz+C^N)\\
&\leq c_d(t_1-t_0)\sqrt{\frac{b_1^2}{\tau_1}+\frac{b_2^2}{\tau_2}}\int_{\mathbb{R}^d}q_2(0,x_2;t^N_i,z)q_1(0,0;t^N_i,z)dz+C^N.
\end{align*}
Now, for general $x_1$, we would have
\begin{displaymath}\numberthis\label{collisionprob}
 (r_N)^{-d}N^{-\frac{1}{2}+\beta}\mathbb{P}(B_i^N)\leq c_d(t_1-t_0)\sqrt{\frac{b_1^2}{\tau_1}+\frac{b_2^2}{\tau_2}}\int_{\mathbb{R}^d}q_2(0,x_2;t^N_i,z)q_1(0,x_1;t^N_i,z)dz+C^N.
\end{displaymath}
Similarly we can show that
\begin{align*}
(r_N)^{-d}N^{-\frac{1}{2}+\beta}\mathbb{P}(B_i^N)\geq c_d(t_1-t_0)\sqrt{\frac{b_1^2}{\tau_1}+\frac{b_2^2}{\tau_2}}\int_{\mathbb{R}^d}q_2(0,x_2;t^N_i,z)q_1(0,x_1;t^N_i,z)dz-C^N.
\end{align*}

This gives us an estimation on $\mathbb{P}(B_i^N)$. Next, we would like to show that the event $B^N_i$ is almost the same as $A_i^N$. More precisely, we want to show that the probability one of $A_i^N$ and $B_i^N$ happens but the other does not happen is bounded by $C^N(r_N)^{d}N^{-\beta+\frac{1}{2}}$. First, we show that the velocities of particles during $S^N_i$ are unlikely to change much. Note that, for $0<s<h_N$ and $j=1,2$ we have,
\begin{displaymath}
V_j^N(t^N_i+s)=V^N_j(t^N_i)e^{-N\tau_js}+\int_{t^N_i}^{t^N_i+s}e^{-N\tau_j(t^{N}_i+s-s')}Nb_jdB^j_{s'}.
\end{displaymath}
Let 
\begin{displaymath}
U_s=\int_{t^N_i}^{t^N_i+s}e^{-N\tau_j(t^{N}_i+s-s')}Nb_jdB^j_{s'}=e^{-N\tau_js}\int_0^se^{N\tau_js'}Nb_jdB^j_{t^{N}_i+s'},
\end{displaymath}
and $M(s)=\int_0^s[e^{N\tau_js'}Nb_j]^2ds'$, we have, by Dubins-Schwarz theorem, 
\begin{displaymath}
U_s=e^{-N\tau_js}W_{M(s)},
\end{displaymath}
for some Brownian motion $W$. By standard Doob's martingale inequality applied on the exponential of Brownian motion we obtain
\begin{displaymath}
\mathbb{P}(\sup_{0<s<h_N}|W_{M(s)}|\geq Nh_N^{k})\leq 2e^{-\frac{N^2h_N^{2k}}{2M(h_N)}}.
\end{displaymath}
Recall $h_N\leq \frac{Cr_N}{N^\beta}$ with $r_N<rN^{-\alpha}$ for some $\alpha>\frac{1}{2}$ and $\beta=\frac{1}{2}-2\epsilon$ for sufficiently small $\epsilon$. In particular, $h_N\leq \frac{C^N}{N}$ and
\begin{displaymath}
M(h_N)=\int_0^{h_N}[e^{N\tau_js'}Nb_j]^2ds'\leq Ch_NN^2b_j^2\leq Ch_NN^2.
\end{displaymath} 
Recall $k=\frac{1}{2}-\epsilon$, we have $\frac{N^2h_N^{2k}}{2M(h_N)}\geq \frac{h_N^{-2\epsilon}}{C}$ and thus
\begin{displaymath}
2e^{-\frac{N^2h_N^{2k}}{2M(h_N)}}\leq  C^N(r_N)^{d}N^{-\beta+\frac{1}{2}}.
\end{displaymath} 
Therefore, we conclude
\begin{displaymath}\numberthis\label{speed}
 \mathbb{P}(\sup_{0<s<h_N}|V_j^N(t^N_i+s)-V^N_j(t^N_i)e^{-N\tau_js}|\geq Nh_N^{k})\leq C^N(r_N)^{d}N^{-\beta+\frac{1}{2}}.
\end{displaymath}
 As a result,
\begin{displaymath}
\mathbb{P}(\sup_{0<s<h_N}|X_j^N(t^N_i+s)-X^N_j(t^N_i)-V^N_j(t^N_i)\int_0^se^{-N\tau_js'}ds'|\geq Nh_N^{1+k})\leq C^N(r_N)^{d}N^{-\beta+\frac{1}{2}}.
\end{displaymath}
This gives us an approximation of the particles' trajectories during $S^N_i$. So, we can further condition on the event 
\begin{displaymath}
\sup_{0<s<h_N}|X_j^N(t^N_i+s)-X^N_j(t^N_i)-V^N_j(t^N_i)\int_0^se^{-N\tau_js'}ds'|< Nh_N^{1+k}
\end{displaymath}
for $j=1,2$. Note that $\frac{Nh_N^{1+k}}{r_N}\rightarrow 0$ and $e^{-N\tau_jh_N}\rightarrow 1$ as $N\rightarrow\infty$. So, conditioning on the above event, we can estimate $X_j^N(t^N_i+s)$ by $X^N_j(t^N_i)+V^N_j(t^N_i)\int_0^se^{-N\tau_js'}ds'$ and the error will be small compared to $r_N$. We can now consider the following events
\begin{align*}
 F^N_i=&\{|X_1^N(t^N_i)-X^N_2(t^N_i)|> r_N\}\\
&\cap\{\exists 0\leq t\leq h_N:|X_1^N(t^N_i)-X^N_2(t^N_i)+V_1^N(t^N_i)\int_0^te^{-N\tau_1s'}ds'-V_2^N(t^N_i)\int_0^te^{-N\tau_2s'}ds')|\\
&\leq r_N-2Nh_N^{1+k}\}
\end{align*}
and
\begin{align*}
 G^N_i=&\{|X_1^N(t^N_i)-X^N_2(t^N_i)|> r_N\}\\
&\cap\{\exists 0\leq t\leq h_N:|X_1^N(t^N_i)-X^N_2(t^N_i)+V_1^N(t^N_i)\int_0^te^{-N\tau_1s'}ds'-V_2^N(t^N_i)\int_0^te^{-N\tau_2s'}ds')|\\
&\leq r_N+2Nh_N^{1+k}\}.
\end{align*}
Then under the conditions we had, we obtain $F^N_i\subseteq A^N_i\subseteq G^N_i$. Moreover, using the same approximation method we used before, we have that the probability that $B^N_i$ happens but $F^N_i$ does not is bounded above by $C^N(r_N)^{d}N^{-\beta+\frac{1}{2}}$ and also the probability that $G^N_i$ happens but $B^N_i$ does not is bounded by $C^N(r_N)^{d}N^{-\beta+\frac{1}{2}}$. Thus, we have the probability one of $A_i^N$ and $B_i^N$ happens but the other does not happen is bounded by $C^N(r_N)^{d}N^{-\beta+\frac{1}{2}}$. 

Now, we let $T'=\min\{t^N_i:B_i^N happens\}$, then we claim that the probability that $B_i^N$ happens but $T'\neq t^N_i$ is bounded by $C^N(r_N)^{d}N^{-\beta+\frac{1}{2}}$. Let $P^N_{ij}$ be the probability that $B_i^N$ and $B_j^N$ both happen and we want to show that
\begin{displaymath}
\sum_{j<i}P^N_{ij}\leq C^N(r_N)^{d}N^{-\beta+\frac{1}{2}}.
\end{displaymath}
For $j=i-1$, we can use similar argument as above to say that the probability that $B^N_j$ happens and $(V^N_1(t^N_i)-V^N_2(t^N_i))\cdot (X^N_1(t^N_i)-X^N_2(t^N_i))\leq 0$ is bounded above by $C^N(r_N)^{d}N^{-\beta+\frac{1}{2}}$. So, the probability that both $B^N_j$ and $B^N_i$ happens is bounded by $C^N(r_N)^{d}N^{-\beta+\frac{1}{2}}$. Now, we will show that for all $j<i-1$ and  $t^N_i-t^N_j\leq N^\lambda$, $P^N_{ij}\leq C^N(r_N)^{d+1}N^{-2\beta+\frac{1}{2}}$. We condition on $B^N_j$ happens and $\mathcal{F}_{t^N_j}$. It suffices to show that the probability $B^N_i$ happens is bounded by $C^Nh_N$. Our strategy is to show that if $B^N_j$ happened, then during $S^N_i$, the particles are probably moving away from each other, and thus they are unlikely to collide. Note that $\lambda<-1$. Let $s=t^N_i-t^N_j$, $X^N=X_1^N-X^N_2$ and $V^N=V_1^N-V^N_2$. Then $X^N(t^N_{i})$ is normally distributed with mean
\begin{displaymath}
X^N(t^N_{j})+V_1^N(t^N_j)\int_0^se^{-N\tau_1s'}ds'-V_2^N(t^N_j)\int_0^se^{-N\tau_2s'}ds'
\end{displaymath}
 and $\frac{1}{C}N^2s^3\leq Var(X^N(t^N_{i}))\leq CN^2s^3$ and $V^N(t^N_i)$ is normally distributed with mean
\begin{displaymath}
 V_1^N(t^N_j)e^{-N\tau_1s}-V_2^N(t^N_j)e^{-N\tau_2s}
\end{displaymath}
and $\frac{1}{C}N^2s\leq Var(V^N(t^N_i))\leq CN^2s$. Also, their correlation is between $\frac{1}{C}$ and $1-\frac{1}{C}$. Because $B^N_j$ happened, we know that
\begin{displaymath}
|V_1^N(t^N_j)-V^2_N(t^N_j)|>N^k.
\end{displaymath}
and
\begin{displaymath}
\exists 0\leq t\leq h_N: |X^N(t^N_j)+t(V_1^N(t^N_j)-V_2^N(t^N_j))|\leq r_N.
\end{displaymath}
Note that $r_N\leq CN^{-\epsilon}h_NN^k$, we have for sufficiently large $N$,
\begin{displaymath}
|X^N(t^N_j)+s(V_1^N(t^N_j)-V_2^N(t^N_j))|\geq \frac{1}{3}sN^k.
\end{displaymath}
Also, we know that
\begin{displaymath}
\max\{|V_1^N(t^N_j)|,|V^2_N(t^N_j)|\}\leq N^m.
\end{displaymath}
For $s'\leq s$, we have for $i'=1,2$,
\begin{displaymath}
1-CN^{\lambda+1}<e^{-N\tau_{i'}s}<1.
\end{displaymath}
Recall that $k=\frac{1}{2}-\epsilon$, $m=\frac{1}{2}+\epsilon$ for sufficiently small $\epsilon$ and $\lambda<2(k-1)=-1-2\epsilon$. Therefore, we have
\begin{displaymath}
|s(V_1^N(t^N_j)-V_2^N(t^N_j))-\big(V_1^N(t^N_j)\int_0^se^{-N\tau_1s'}ds'-V_2^N(t^N_j)\int_0^se^{-N\tau_2s'}ds'\big)|\leq C^NsN^k.
\end{displaymath}
So, if everything goes according to expectations, at time $t^N_i$, the two particles will be of distance at least $\frac{1}{3}sN^k$ away from each other and they will move further away from each ohter. So, in order to make $B^N_i$ happen, either $X^N(t^N_i)$ needs to deviate sufficiently from its mean or $V^N(t^N_i)$ needs to deviate sufficiently from its mean. More precisely, we need one of the following two events to happen
\begin{equation}\label{con1}
|X^N(t^N_i)-X^N(t^N_j)-V_1^N(t^N_j)\int_0^se^{-N\tau_1s'}ds'+V_2^N(t^N_j)\int_0^se^{-N\tau_2s'}ds'|>\frac{1}{4}sN^k,
\end{equation}
or
\begin{equation}\label{con2}
 |V^N(t^N_i)- V_1^N(t^N_j)e^{-N\tau_1s}+V_2^N(t^N_j)e^{-N\tau_2s}|>\frac{1}{2}N^k.
\end{equation}
First, we condition on the velocity $V^N(t^N_i)$ such that (\ref{con2}) is false. In order to make $B^N_i$ happen, $X^N(t^N_i)$ needs to lie in $D^N(V^N(t^N_i))$. Then the conditional probability density function of $X^N(t^N_i)$ inside $D^N(V^N(t^N_i))$ is bounded above by $N^{-2m}$. Then, by same calculation as earlier, we obtain that the probability $B^N_i$ happens is bounded above by $C^Nh_N$. Now, we condition on that (\ref{con2}) is true. Because the standard deviation of $V^N(t^N_i)$ is at most $CN\sqrt{s}$, which is smaller than $CN^{\frac{\lambda}{2}+1-k}N^{k}$, and $\frac{\lambda}{2}+1-k<0$, the probability (\ref{con2}) happens is at most $C^NN^{-m-d}s^{3d}$ and the conditional probability density function of $X^N(t^N_i)$ is at most $\frac{C}{(N^2s^3)^{\frac{d}{2}}}$. So, by same calculation as earlier again, we can deduce that the probability $B^N_i$ happens is bounded above by $C^Nh_N$. This concludes that $P^N_{ij}\leq C^N(r_N)^{d+1}N^{-2\beta+\frac{1}{2}}$. 

Now, suppose $N^{\lambda}\leq t^N_i-t^N_j\leq N^{-1}$. We again condition on $B^N_j$ happens and $\mathcal{F}_{t^N_j}$. Then, we know that conditional on any $V^{N}(t^N_i)$, $X^N(t^N_i)$ will be normally distributed with $Var(X^N(t^N_i))\geq\frac{N^{2+3\lambda}}{C}$. Therefore, we have that conditional on $B^N_j$ happening, $B^N_i$ happens with at most $N^{m-\beta}(r_N)^{d}N^{-\frac{d}{2}(2+3\lambda)}$ probability and
\begin{align*}
P^N_{ij}&\leq C(r_N)^{d}N^{-\beta+\frac{1}{2}}N^{m-\beta}(r_N)^{d}N^{-\frac{d}{2}(2+3\lambda)}\\
&\leq C[(r_N)^{d+1}N^{-2\beta+\frac{1}{2}}][(r_N)^{d-1}N^{m-\frac{d}{2}(2+3\lambda)}]\\
&\leq C[(r_N)^{d+1}N^{-2\beta+\frac{1}{2}}][N^{m-\alpha(d-1)-\frac{d}{2}(2+3\lambda)}].
\end{align*} 
Also
\begin{displaymath}
m-\alpha(d-1)-\frac{d}{2}(2+3\lambda)\leq m-2\alpha-3-\frac{9\lambda}{2}.
\end{displaymath}
Recall 
\begin{displaymath}
\lambda>\frac{2}{9}(m-2\alpha-4),
\end{displaymath}
we have 
\begin{displaymath}
 m-2\alpha-3-\frac{9\lambda}{2}\leq 1
\end{displaymath}
and $P^N_{ij}\leq C^N(r_N)^{d+1}N^{-2\beta+\frac{3}{2}}$. Finally, for $t^N_i-t^N_j>N^{-1}$ and condition on $B^N_j$ happens and $\mathcal{F}_{t^N_j}$, we know that conditional on any $V^{N}(t^N_i)$, $X^N(t^N_i)$ will be normally distributed with $Var(X^N(t^N_i))\geq\frac{(t^N_i-t^N_j)}{C}$. Therefore, we have
\begin{displaymath}
P^N_{ij}\leq C(r_N)^{d}N^{-\beta+\frac{1}{2}}N^{m-\beta}(r_N)^{d}(t^N_i-t^N_j)^{-\frac{d}{2}},
\end{displaymath}
and thus
\begin{align*}
\sum_{j:t^N_i-t^N_j>N^{-1}} P^N_{ij}&\leq C(r_N)^{d}N^{-\beta+\frac{1}{2}}N^{m-\beta}(r_N)^{d}\int_{N^{-1}}^{\infty}s^{-\frac{d}{2}}ds\\
&\leq C(r_N)^{d}N^{-\beta+\frac{1}{2}}N^{m}(r_N)^{d}N^{\frac{d}{2}-1}\\
&\leq C^N(r_N)^{d}N^{-\beta+\frac{1}{2}},
\end{align*}
where for the last inequality we use the fact that $r_N<N^{-\alpha}$ and $m-1<0$. We also have
\begin{displaymath}
\sum_{j:t^N_i-t^N_j\leq N^{-1}} P^N_{ij}\leq C^NN^{-1+\beta}r_N^{-1}(r_N)^{d+1}N^{-2\beta+\frac{3}{2}}\leq  C^N(r_N)^{d}N^{-\beta+\frac{1}{2}}.
\end{displaymath}
So, we can conclude that the probability that $B_i^N$ happens but $T'\neq t^N_i$ is bounded by $C^N(r_N)^{d}N^{-\beta+\frac{1}{2}}$.

So far, we analyzed the collision events during the time interval $[t_0,t_1]$, and the only place we used the lower bound $t_0$ is to make sure that for $t>t_0$, $(V^N_i,X^N_i)$ are bivariate normally distributed with 
\begin{align*}
 &|Var(V^N_i)-\frac{Nb^2_i}{2\tau_i}|\leq \frac{C}{N},\\
&|Var(X^N_i)-\frac{b_i^2t}{\tau_i^2}|\leq\frac{C}{N},\\
&|Cov(X^N_i,V^N_i)-\frac{b_i^2}{2\tau^2_i}|\leq\frac{C}{N}.
\end{align*}
So, our analysis would still work if we replace $t_0$ by $N^{-(1-\epsilon)}$. By same method as earlier, we could show that the probability that collision happens before $N^{-(1-\epsilon)}$ is at most $C^N(r_N)^{d-1}N^{-\beta+\frac{1}{2}}$. Also, we could show that the probability $T<N^{-(1-\epsilon)}$ and $B^N_i$ happens is bounded by $C^N(r_N)^{d}N^{-\beta+\frac{1}{2}}$. Thus, we can conclude that the probability $T<t_0$ and $B^N_i$ happens is bounded by $C^N(r_N)^{d}N^{-\beta+\frac{1}{2}}$. Therefore, we obtain
\begin{displaymath}
\mathbb{P}(T\in[t_0,t_1]\mbox{ or }T'\in[t_0,t_1]\mbox{ and }|T-T'|>h_N)\leq C^N(r_N)^{d}N^{-\beta+\frac{1}{2}}.
\end{displaymath}
By similar argument as before, we have
\begin{displaymath}
\mathbb{P}(T\in[t_0,t_1]\mbox{ or }T'\in[t_0,t_1]\mbox{ and }|X(T)-X^N_1(T')|>3N^mh_N)\leq C^N(r_N)^{d}N^{-\beta+\frac{1}{2}}.
\end{displaymath}
So, because $g$ is bounded and uniformly continuous, we would have
\begin{displaymath}
|(r_N)^{1-d}N^{-\frac{1}{2}}(\mathbb{E}(g(T,X(T))-g(T',X_1^N(T')))|\rightarrow 0.
\end{displaymath}
We also have
\begin{displaymath}
|(r_N)^{1-d}N^{-\frac{1}{2}}\mathbb{E}[g(T',X_1^N(T'))-\sum_i\mathbf{1}_{B^N_i}g(t^N_i,X_1^N(t^N_i))]|\rightarrow 0.
\end{displaymath}
By similar analyse as in deriving (\ref{collisionprob}), we would get
\begin{align*}
&|\mathbb{E}[\mathbf{1}_{B^N_i}g(t^N_i,X_1^N(t^N_i))]\\
&-c_d(r_N)^{d}N^{-\beta+\frac{1}{2}}\sqrt{\frac{b_1^2}{\tau_1}+\frac{b_2^2}{\tau_2}}r^{d-1}(t_1-t_0)\int_{\mathbb{R}^d}q_1(0,x_1;t^N_i,z)q_2(0,x_1;t^N_i,z)g(t^N_i,z)dz|\\
&\leq C^N(r_N^d)N^{-\beta+\frac{1}{2}}.
\end{align*}
Now, by continuity of $\int_{\mathbb{R}^d}q_1(0,x_1;t^N_i,z)q_2(0,x_1;t^N_i,z)g(t^N_i,z)dz$, we get
\begin{displaymath}
|(r_N)^{1-d}N^{-\frac{1}{2}}\mathbb{E}(g(T,X(T)))-c_d\sqrt{\frac{b_1^2}{\tau_1}+\frac{b_2^2}{\tau_2}}\int_{t_0}^{t_1}\int_{\mathbb{R}^d}q_1(0,x_1;t,z)q_2(0,x_2;t,z)g(t,z)dtdz|\rightarrow 0,
\end{displaymath}
as desired.
\end{proof}
\subsection{Proof of \ref{brown}}
Now, we look at what happens if the radii of the particles converge to zero slowly. More precisely, we will prove Theorem \ref{brown}. The idea is to approximate the free motions of the particles by Brownian motions and show that the deviations of the motions from Brownian are small enough for us to estimate the collision distributions. We start our proof with the following lemma, which allows us to bound the speed of the particles.
\begin{lem}
For all $m>\frac{1}{2}$, $k>0$ and $t_1>0$ we have that there exists a constant $C$ such that, for $j=1,2$,
\begin{displaymath}
\mathbb{P}(\sup_{t<t_1}|V^N_j(t)|>N^m)<CN^{-k},
\end{displaymath} 
for all $N$.
\end{lem}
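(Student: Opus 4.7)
The plan is as follows. First I observe that $V^N_j$ is a centered Gaussian (Ornstein--Uhlenbeck) process whose explicit covariance was computed earlier in the section; in particular, for each $t>0$, $\mathrm{Var}(V^N_j(t)) \leq Nb_j^2/(2\tau_j)$, so the pointwise typical scale of $V^N_j$ is $\sqrt{N}$, well below $N^m$ since $m>\frac12$. I therefore aim to combine a pointwise Gaussian tail bound at a fine grid with a martingale maximal bound on the oscillations between grid points, closing the argument by a union bound.

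Concretely, I would partition $[0,t_1]$ into intervals of length $\Delta = 1/(N\tau_j)$, producing $K = O(N)$ grid points $0 = s_0 < s_1 < \cdots < s_K \leq t_1$. At each $s_i$, the standard Gaussian tail estimate gives
\[
\mathbb{P}(|V^N_j(s_i)| > N^m/2) \leq 2\exp(-cN^{2m-1})
\]
for some $c>0$ depending on $b_j,\tau_j$. Since $2m-1>0$, this is smaller than any negative power of $N$, so the union bound over the $O(N)$ grid points still leaves a super-polynomially small contribution.

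For the oscillation on each subinterval, I would use the decomposition $V^N_j(t) = V^N_j(s_i)\,e^{-N\tau_j(t-s_i)} + Y_i(t)$ with $Y_i(t) = \int_{s_i}^t Nb_j\,e^{-N\tau_j(t-s)}\,dB^j_s$, an OU-piece started at zero. The process $N_i(t) := Y_i(t)\,e^{N\tau_j t}$ is a continuous martingale on $[s_i,s_{i+1}]$ with quadratic variation bounded by $N^2 b_j^2\,e^{2N\tau_j s_{i+1}}\Delta$, and the trivial bound $|Y_i(t)| \leq e^{-N\tau_j s_i}|N_i(t)|$ converts the exponential Doob inequality for $N_i$ into
\[
\mathbb{P}\bigl(\sup_{t\in[s_i,s_{i+1}]}|Y_i(t)| > N^m/2\bigr) \leq 2\exp(-c' N^{2m-1}).
\]
Combining this with the bound $|V^N_j(s_i)\,e^{-N\tau_j(t-s_i)}| \leq |V^N_j(s_i)|$ controlled in the previous step, and taking a union bound over the $O(N)$ intervals, yields the desired super-polynomial, hence $C N^{-k}$, bound on $\mathbb{P}(\sup_{t<t_1}|V^N_j(t)| > N^m)$.

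The main (and only mildly delicate) obstacle is choosing the mesh $\Delta$ correctly: $\Delta$ must be at least of order $1/(N\tau_j)$ so that the multiplicative factor $e^{2N\tau_j\Delta}$ entering the quadratic variation stays bounded and the Doob bound is sharp enough to yield the exponent $N^{2m-1}$; but $\Delta$ must also be large enough that the number of intervals is only polynomial in $N$, so that the union bound is not destroyed. The OU relaxation scale $\Delta = 1/(N\tau_j)$ is the natural choice that balances these two requirements.
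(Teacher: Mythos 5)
Your proof is correct and follows the same overall strategy as the paper: discretize $[0,t_1]$ into a grid, bound $|V^N_j|$ at grid points by a Gaussian tail estimate, bound the oscillation on each subinterval by an exponential martingale maximal inequality applied to the Brownian integral (after removing the OU damping factor), and close with a union bound. The one place you diverge from the paper is the mesh size, and your choice is actually the sharper one. The paper fixes an arbitrary $\beta>\tfrac12$ and takes $h_N\sim N^{-\beta}$, then appeals to the same computation as in the derivation of the earlier oscillation estimate, where the crucial bound on the quadratic variation $M(h_N)=\int_0^{h_N}(Nb_j e^{N\tau_j s'})^2\,ds'\le C N^2 h_N$ is valid only because there $Nh_N\to 0$; for $\tfrac12<\beta<1$ the factor $e^{2N\tau_j h_N}$ is unbounded and that bound on $M(h_N)$ fails, so the Doob estimate becomes vacuous. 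Your choice $\Delta\sim 1/(N\tau_j)$ is exactly what is needed to keep $e^{2N\tau_j\Delta}$ of order one, which yields the exponent $N^{2m-1}$ cleanly and gives a super-polynomial (rather than merely polynomial) bound, comfortably more than the lemma requires. In effect you have filled in a small gap in the paper's stated mesh choice, and your ``balance the OU relaxation scale against the union bound'' heuristic is the right way to see why $\Delta\sim 1/N$ is forced.
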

\begin{proof}
Again, we let $C$ be a constant whose value can change from line to line. We fix $\beta>\frac{1}{2}$ and let $h_N=\frac{t_1}{\lfloor N^{\beta}\rfloor}$. Let $t^N_i=ih_N-h_N$ and we subdivide $(0,t_1]$ into $S^N_1,S^N_2,...,S^N_{\lfloor N^{\beta}\rfloor}$ where $S^N_i=(t^N_i,t^N_{i+1}]$. Choose any $m>m'>\frac{1}{2}$ and $k'>k+\beta>0$. For $j=1,2$, because $V^N_j(t)$ is Gaussian distributed with mean zero and
\begin{displaymath}
 Var(V^N_j(t))\leq \frac{Nb^2_j}{2\tau_j}
\end{displaymath}
for all $t$, we have
\begin{displaymath}
\mathbb{P}(|V^N_j(t^N_i)|>N^{m'})<CN^{-k'}
\end{displaymath}
for any $0<i\leq \lfloor N^{\beta}\rfloor$. Thus, the probability that there is any $t_i^N<t_1$ with $|V^N_j(t^N_i)|>N^{m'}$ is bounded above by $CN^{-k'+\beta}$ for sufficiently large $N$. Also, using the same method as in the derivation of (\ref{speed}), we can show that conditioning on the event  $|V^N_j(t^N_i)|<N^{m'}$,
\begin{displaymath}
 \mathbb{P}(\sup_{0<s<h_N}|V_j^N(t^N_i+s)-V^N_j(t^N_i)e^{-N\tau_is}|\geq N^m-N^{m'})\leq CN^{-k'},
\end{displaymath}
and thus
\begin{displaymath}
 \mathbb{P}(\sup_{0<s<h_N}|V_j^N(t^N_i+s)|\geq N^m)\leq CN^{-k'}.
\end{displaymath}
Summing over $0<i\leq \lfloor N^{\beta}\rfloor$, we can conclude that
\begin{displaymath}
\mathbb{P}(\sup_{t<t_1}|V^N_j(t)|>N^m)<CN^{-k},
\end{displaymath} 
as desired.
\end{proof}

Now, we can start proving Theorem \ref{brown}.
\begin{proof}
Again, let $C$ be a constant whose value can change from line to line and for $i=1,2$, let $W^N_i=X^N_i+\frac{1}{N\tau_i}V_i^N$, then $W^N_i$ is a $d$-dimensional Brownian motion with diffusivity $(\frac{b_i}{\tau_i})^2$. Let $<\frac{1}{2}<m<1-\alpha$ and define the stopping time $T'$ by 
\begin{displaymath}
 T'=\inf\{t\geq 0:|V^N_i(t)|>N^m\}\wedge T.
\end{displaymath}
Then, by the above lemma, we have $\mathbb{P}(T\neq T' \mbox{ and } T'<t_1)<CN^{-2\alpha d}$. We can now repeat the argument used in proving (\ref{eqn1}) to show that
\begin{align*}
 &\int_{t_0}^{t_1}\int_{\mathbb{R}^d}q_1(0,x_1;t,z)q_2(0,x_2;t,z)g(t,z)dtdz\\
&=\int_{t_0}^{t_1}\int_{\mathbb{R}^d}q_1(T',W^N_1(T');t,z)q_2(T',W^N_2(T');t,z)g(t,z)dtdz.
\end{align*}
Let $\epsilon\geq 2r_N$ and note that if $T<T'$, then 
\begin{displaymath}
 r_N-CN^{m-1} \leq\|W^N_1(T')-W^N_2(T')|\leq r_N+CN^{m-1}.
\end{displaymath}
By the same method as in deriving (\ref{stop}), we would have that on the event $\{T=T'\}$ ,
 \begin{align*}
 &||W^N_1(T')-W^N_2(T')|^{2-d}g(T,X(T))\\
&-c_d[(\frac{b_1}{\tau_1})^2+(\frac{b_2}{\tau_2})^2]\int_0^\infty\int_{\mathbb{R}^d} q_1(T',W^N_1(T');s,z)q_2(T',W^N_2(T');s,z)g(s.z)dzds|\\
&\leq C[\epsilon^{2-d}\|g\|+(r_N)^{2-d}(\phi_g(\epsilon)+\epsilon^2)].
\end{align*}
Note that $N^{m-1}<Cr_N$, thus
 \begin{align*}
&|(r_N)^{2-d}g(T,X(T))-c_d[(\frac{b_1}{\tau_1})^2+(\frac{b_2}{\tau_2})^2]\int_0^\infty\int_{\mathbb{R}^d} q_1(T',W^N_1(T');s,z)q_2(T',W^N_2(T');s,z)g(s.z)dzds|\\
&\leq C[(\epsilon^{2-d}+(r_N)^{1-d}N^{m-1})\|g\|+(r_N)^{2-d}(\phi_g(\epsilon)+\epsilon^2)].
\end{align*}
When $T\neq T'$, we simply have
 \begin{align*}
 &|(r_N)^{2-d}g(T,X(T))-c_d[(\frac{b_1}{\tau_1})^2+(\frac{b_2}{\tau_2})^2]\int_0^\infty\int_{\mathbb{R}^d} q_1(T',W^N_1(T');s,z)q_2(T',W^N_2(T');s,z)g(s.z)dzds|\\
&\leq C(r_N)^{2-d}.
\end{align*}
Also, using the same method as at the end of the proof of Theorem \ref{diff}, we could get $\mathbb{P}(T'<t_1)\leq C(r_N)^{d-2}$.
Thus,
\begin{align*}
&\big|(r_N)^{2-d}\mathbb{E}[g(T,X(T))]-c_d[(\frac{b_1}{\tau_1})^2+(\frac{b_2}{\tau_2})^2]\int_{t_0}^{t_1}\int_{\mathbb{R}^d}q_1(0,x_1;t,z)q_2(0,x_2;t,z)g(t,z)dtdz\big|\\
&\leq C[(\epsilon^{2-d}+(r_N)^{1-d}N^{m-1})\|g\|+(r_N)^{2-d}(\phi_g(\epsilon)+\epsilon^2)](r_N)^{d-2}+C\mathbb{P}(T\neq T' \mbox{ and } T'<t_1)(r_N)^{2-d}.
\end{align*}
By letting $\epsilon\rightarrow 0$ as $N\rightarrow\infty$, we have that the right hand side of the above inequality converges to zero as desired.
\end{proof}

\section{Uniqueness proof}
\subsection{Idea of the proof}\label{idea}
As explained in the introduction, we are interested in the well-posedness of Smoluchowski coagulation-diffusion equations. In the past, the most common way for obtaining well-posedness result for unbounded $K$ and $1/a$ was to approximate the PDEs by those with bounded $K$ and $1/a$. However, the allowed growths for $K$ and $1/a$ were not enough for us obtain the well-posedness results for the Ornstein-Uhlenbeck cases. Our approach, however, attempts to linearize the PDEs and exploit their properties to obtain results that work for the Ornstein-Uhlenbeck cases. We first explain a heuristic argument for Theorem \ref{ssthm1}. Suppose we have two solutions, $\mu^1$ and $\mu^2$, and let $\mu=\mu^2-\mu^1$. By Hahn decomposition theorem, for each $t$ and $x$, we can decompose $\mathbb{R}^d$ into a positive set $P(t,x)$ and a negative set $N(t,x)$ such that for all $A\subseteq P(t,x)$, $\mu(A)\geq 0$ and for all $A\subseteq N(t,x)$, $\mu(A)\leq 0$, and this decomposition is essentially unique. Define 
\begin{displaymath}|\mu_t|(x,A)=\int_{A}\mu_t(x,dy)\mathbf{1}_{y\in P}-\int_{A}\mu_t(x,dy)\mathbf{1}_{y\in N}\end{displaymath}
 and consider $\|\langle w,|\mu_t|\rangle\|_1$. Suppose at time $s$ and position $x$, there are more particles of mass $y$ in $\mu^2$ than in $\mu^1$. We look at what further difference would this cause. Since those extra particles can coagulate with particles of mass $y'$, this will decrease $|\mu|$ at position $x$ and mass $y$, and in the worst case increase $|\mu|$ for mass $y'$ and $y+y'$. So, the total rate of increase of $\langle w,|\mu_t|\rangle\ $ at time $s$ and position $x$ due to those extra particles will be at most
\begin{align*}
&|\mu(x,dy)|\int_0^\infty K(y,y')(\mu^1_s(x,dy')+\mu^2_s(x,dy'))[w(y+y')+w(y')-w(y)]\\
&\leq|\mu(x,dy)|\int_0^\infty K(y,y')(\mu_s^1(x,dy')+\mu_s^2(x,dy'))(2w(y'))\\
&\leq |\mu(x,dy)|w(y)\int_0^\infty (\mu^1_s(x,dy')+\mu^2_s(x,dy'))(2w(y')^2)\\
&\leq 2|\mu(x,dy)|w(y)\|\langle w^2,\mu^1_s+\mu^2_s\rangle\|_\infty
\end{align*}
Furthermore, Brownian motions of the particles won't increase $\|\langle w,|\mu_t|\rangle\|_1$. So, we can integrate the above inequality over $s,x$ and $y$ and obtain
\begin{displaymath}
\|\langle w,|\mu_t|\rangle\|_1\leq 2\sup_{s\leq t}(\|\langle w^2,\mu^1_s+\mu^2_s\rangle\|_\infty)\int_0^t\|\langle w,|\mu_s|\rangle\|_1ds.
\end{displaymath}
Then we can use Gronwall's inequality to show that $\mu^1_t=\mu^2_t$ provided $\sup_{s\leq t}(\|\langle w^2,\mu^1_s+\mu^2_s\rangle\|_\infty)<\infty$. So, this argument indicates that $\|\langle w,|\mu_t|\rangle\|_1$ is the right norm to look at.
\subsection{Space homogeneous case}
In \cite{th}, Norris has discussed about the well-posedness of Smoluchowski's coagulation equations in the space homogeneous case. More precisely, the space homogeneous analogue of equation (\ref{eqncd}) is 
\begin{displaymath}
\dot{\mu}(dy)=K^+(\mu_t)(dy)-K^-(\mu_t)(dy),
\end{displaymath}
where 
\begin{displaymath}
K^+(\mu)(A)=\frac{1}{2}\int_0^\infty\int_0^\infty \mathbf{1}_{y+y'\in A}K(y,y')\mu(dy)\mu(dy'),
\end{displaymath}
\begin{displaymath}
K^-(\mu)(A)=\int_{y\in A}\int_0^\infty K(y,y')\mu(dy)\mu(dy').
\end{displaymath}
We will now make an informal discussion about this problem. For the uniqueness part, the argument in $\ref{idea}$ still works if we just ignore the dependence of $\mu$ on $x$ and the  contribution of the Brownian motions. The corresponding result is when we let $\mu_1$ and $\mu_2$ be solutions, we will have $\mu^1_t=\mu^2_t$ provided $\sup_{s\leq t}(\langle w^2,\mu^1_s+\mu^2_s\rangle)<\infty$. 

Therefore, if we can show that whenever $\mu$ is a solution, $\sup_{s\leq t}(\langle w^2,\mu_s\rangle)<\infty$, then we know that there can be at most one solution. Now, we look at the evolution of $\langle w^2,\mu_s\rangle$. When a particle of mass $y$ collides with a particle of mass $y'$, it brings a change of $w^2(y+y')-w^2(y)-w^2(y')$ to $\langle w^2,\mu_s\rangle$. When $w$ is sublinear, we have 
\begin{displaymath}
w^2(y+y')-w^2(y)-w^2(y')\leq 2w(y)w(y').
\end{displaymath}
Integrating over all possible collisions and over time, we have
\begin{align*}
\langle w^2,\mu_t\rangle&\leq \int_0^t\int_0^\infty\int_0^\infty w(y)w(y')K(y,y')\mu_s(y)\mu_s(y')dydy'ds+\langle w^2,\mu_0\rangle\\
&\leq \int_0^t\int_0^\infty\int_0^\infty w^2(y)w^2(y')\mu_s(y)\mu_s(y')dydy'ds+\langle w^2,\mu_0\rangle\\
&\leq \int_0^t\langle w^2,\mu_s\rangle^2ds+\langle w^2,\mu_0\rangle\\
&\leq \frac{1}{\frac{1}{\langle w^2,\mu_0\rangle}-t}.
\end{align*}
So, we conclude that there can be at most one local solution in the time interval $[0,\frac{1}{\langle w^2,\mu_0\rangle}]$. Note that, in the case when $w(y)=y$ and $K(y,y')=yy'$, all the above inequalities become equalities, and we know thus $\langle w^2,\mu_t\rangle$ will blow up after $\frac{1}{\langle w^2,\mu_0\rangle}$. 

Now, if we assume $K(y,y')\leq w(y)v(y')+w(y')v(y)$ with $w(y)v(y)<y$, then we have
\begin{align*}
\langle w^2,\mu_t\rangle&\leq \int_0^t\int_0^\infty\int_0^\infty w(y)w(y')K(y,y')\mu_s(y)\mu_s(y')dydy'ds+\langle w^2,\mu_0\rangle\\
&\leq \int_0^t\int_0^\infty\int_0^\infty w^2(y)w(y')v(y')\mu_s(y)\mu_s(y')dydy'ds\\+&\int_0^t\int_0^\infty\int_0^\infty w^2(y')w(y)v(y)\mu_s(y)\mu_s(y')dydy'ds+\langle w^2,\mu_0\rangle\\
&\leq 2 \int_0^t\langle w^2,\mu_s\rangle\langle wv,\mu_s\rangle ds+\langle w^2,\mu_0\rangle\\
&\leq  2\int_0^t\langle w^2,\mu_s\rangle\langle y,\mu_s\rangle ds+\langle w^2,\mu_0\rangle.
\end{align*}
Recall that we defined solutions to satisfy
\begin{displaymath}
\sup_{s<T}\langle y,\mu_s\rangle<\infty,
\end{displaymath}
we can thus use Gronwall's inequality to show $\langle w^2,\mu_t\rangle<\infty$ provided $\langle w^2,\mu_0\rangle<\infty$. Since this works for all $T$, we  conclude that there can be at most one global solution.
\subsection{Proof of Theorem \ref{ssthm1}}
Now, we will prove Theorem \ref{ssthm1} rigorously. In order to make sense of $(\ref{eqn})$, we need to first show that both $K^+$ and $K^-$ are kernels. Let $\nu(x,.)$ denote the product measure of $\mu(x,.)$ and $\mu(x,.)$. Then as $K:(0,\infty)\times (0,\infty)\rightarrow (0,\infty)$ is a measurable function, $K\nu(x,.)$ is also a measure. As $f:(y,y')\rightarrow y+y'$ is a measurable function from $(0,\infty)\times (0,\infty)$ to $(0,\infty)$, $K^+(\mu)(x,.)$ is the image measure of $K\nu(x,.)$ induced by $f$. Also, $g:(y,y')\rightarrow y'$ is a measurable function, and thus $K^-(\mu)(x,.)$ is the image measure of $K\nu(x,.)$ induced by $g$.

Now, we assume that $\mu^1$ and $\mu^2$ are solutions and for $i=1,2$, $\sup_{t\leq T}\|\langle w^2,\mu^i_t\rangle\|_\infty<\infty$. We formulate a differential equation describing the behaviour of individual particles in the solutions. For kernels $\nu$ and $\mu$ and any measurable set $A$, let
\begin{displaymath}
K^{\nu+}(\mu)(x,A)=\int_0^\infty\int_0^\infty \mathbf{1}_{y+y'\in A}\frac{y'}{y+y'}K(y,y')\nu(x,dy)\mu(x,dy'),
\end{displaymath}
\begin{displaymath}
K^{\nu-}(\mu)(x,A)=\int_{A}\mu(x,dy')\int_0^\infty K(y,y')\nu(x,dy),
\end{displaymath}
and $K^{\nu}(\mu)=K^{\nu+}(\mu)-K^{\nu-}(\mu)$. By similar analysis as in Section \ref{s2.2}, we have that $K^{\nu\pm}(\mu)$ are also kernels. Denote $K^{i\pm}=K^{\mu^i\pm}$ and $K^{i}=K^{\mu^i}$. Consider the linear evolution equation
\begin{equation}\label{sseqn1}
q^i_t+\int_0^t P_{t-s}K^{i-}_s(q^i_s)ds=P_tq_0+\int_0^t P_{t-s}K^{i+}_s(q^i_s)ds.
\end{equation}
Let $\mathcal{M}'$ be the set of $q$ which can be written as $q^+-q^-$ with $q^+,q^-\in\mathcal{M}$. We say $q\in\mathcal{M}'$ is a solution to (\ref{sseqn1}) up to time $T$ if $q$ satisfies (\ref{sseqn1}) for $t\leq T$ and
\begin{displaymath}
\sup_{t\leq T}\|\langle y,|q_t|\rangle\|_{1}<\infty.
\end{displaymath}
Also, let $S$ be the set of $\nu:\mathbb{R}^d\times\mathcal{B}(0,\infty)\rightarrow[-\infty,\infty]$ such that $\|\langle y,|\nu|\rangle\|_{1}<\infty$.
\begin{prop} 
If we start at $q_0=\mu_0$, then $q^i_t=\mu^i_t$ is a solution of (\ref{sseqn1}).
\end{prop}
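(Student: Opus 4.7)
The plan is to show that when $q^i$ is replaced by $\mu^i$ on the right-hand side of (\ref{sseqn1}), the linearised operators $K^{i\pm}$ collapse back to the quadratic $K^\pm$ appearing in (\ref{eqn}), so that (\ref{sseqn1}) reduces to the nonlinear equation $\mu^i$ already satisfies.

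First I would treat the loss term. Directly from the definitions,
\begin{displaymath}
K^{i-}(\mu^i)(x, A) = \int_A \mu^i(x, dy')\int_0^\infty K(y, y')\mu^i(x, dy) = K^-(\mu^i)(x, A),
\end{displaymath}
so no symmetrisation is needed here.

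For the gain term I would use the symmetry of $K$. Starting from
\begin{displaymath}
K^{i+}(\mu^i)(x, A) = \int_0^\infty\int_0^\infty \mathbf{1}_{y+y'\in A}\frac{y'}{y+y'}K(y, y')\mu^i(x, dy)\mu^i(x, dy'),
\end{displaymath}
relabelling $y \leftrightarrow y'$ and using $K(y, y') = K(y', y)$ produces the same integral but with the factor $\frac{y}{y+y'}$ instead of $\frac{y'}{y+y'}$. Averaging the two representations and using $\frac{y'}{y+y'}+\frac{y}{y+y'}=1$ yields $K^+(\mu^i)(x, A)$. The interchange of integrals is justified by the bound $K\le w\otimes w$ together with the finite first moment of $\mu^i$, which together imply that the double integral is absolutely convergent.

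Substituting these two identities into (\ref{sseqn1}) with $q^i=\mu^i$ and $q_0 = \mu^i_0 = \mu_0$ turns it into (\ref{eqn}) for $\mu^i$, which holds by hypothesis. The admissibility requirement $\sup_{t \le T}\|\langle y, |\mu^i_t|\rangle\|_1 < \infty$ is immediate: $\mu^i_t$ is non-negative, so $|\mu^i_t| = \mu^i_t$, and the mass bound $\|\langle y, \mu^i_t\rangle\|_1 \le \|\langle y, \mu^i_0\rangle\|_1$ recalled just above from \cite{continuous} supplies the required supremum. There is essentially no obstacle in this proposition; its whole content is the symmetrisation identity linking the factor $\frac{y'}{y+y'}$ in $K^{i+}$ to the factor $\frac{1}{2}$ in $K^+$, which is precisely what was built into the definition of $K^{\nu+}$ so that the individual-particle interpretation of the linearised equation would agree with the original nonlinear equation when $\nu=\mu=\mu^i$.
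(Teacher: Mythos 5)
Your proof is correct and takes essentially the same approach as the paper: you verify $K^{i-}(\mu^i)=K^-(\mu^i)$ directly from the definitions, and obtain $K^{i+}(\mu^i)=K^+(\mu^i)$ by symmetrising in $y\leftrightarrow y'$ and using $\frac{y}{y+y'}+\frac{y'}{y+y'}=1$, then substitute into (\ref{sseqn1}) to recover (\ref{eqn}). Your additional remarks on absolute convergence of the double integral and on the admissibility bound $\sup_{t\le T}\|\langle y,\mu^i_t\rangle\|_1<\infty$ are points the paper takes for granted but do belong to a complete argument.
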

\begin{proof}
Note that
\begin{displaymath}
K^{i-}_s(\mu^i_s)=K^{-}_s(\mu^i_s),
\end{displaymath}
and
\begin{align*}
K^{i+}_s(\mu^i_s)(x,A)&=\int_0^\infty\int_0^\infty \mathbf{1}_{y+y'\in A}\frac{y'}{y+y'}K(y,y')\mu^i(x,dy)\mu^i(x,dy')\\
&=\frac{1}{2}[\int_0^\infty\int_0^\infty \mathbf{1}_{y+y'\in A}\frac{y'}{y+y'}K(y,y')\mu^i(x,dy)\mu^i(x,dy')\\
&+\int_0^\infty\int_0^\infty \mathbf{1}_{y+y'\in A}\frac{y}{y+y'}K(y,y')\mu^i(x,dy)\mu^i(x,dy')]\\
&=K^{+}_s(\mu^i_s)(x,A).
\end{align*}
Plugging these into (\ref{eqn}), we have 
\begin{displaymath}
\mu^i_t+\int_0^t P_{t-s}K^{i-}_s(\mu^i_s)ds=P_t\mu_0+\int_0^t P_{t-s}K^{i+}_s(\mu^i_s)ds,
\end{displaymath}
and thus $\mu^i_t$ is a solution of (\ref{sseqn1}).
\end{proof}
We now look at the heuristic meaning of the above equation. Suppose a particle with initial distribution $yq_0$ and makes Brownian motion and coagulates with other particles distributed according to $\mu^i$, then at time $t$, its distribution is $yq^i_t$.

\begin{prop}\label{ssprop32}
Assume $\|\langle y,|q_0|\rangle\|_{1}<\infty$, then equation (\ref{sseqn1}) has at a unique solution in $\mathcal{M}'$. Moreover, if $q_0$ is non-negative, then $q_t$ is also non-negative.
\end{prop}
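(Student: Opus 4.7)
The equation (\ref{sseqn1}) is linear in $q^i$ since $\mu^i$ is fixed data, so my plan is to establish existence and the non-negativity assertion through a probabilistic construction, and uniqueness through the weighted $L^1$ Gronwall argument from Section \ref{idea}.

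For existence and non-negativity, I would exploit the heuristic interpretation stated just after (\ref{sseqn1}). Setting $\rho_t(x,dy):=y\,q^i_t(x,dy)$, the equation (\ref{sseqn1}) transforms into the Fokker--Planck equation for the jump-diffusion $(X_t,Y_t)$ on $\mathbb{R}^d\times(0,\infty)$ with generator
\[
\mathcal{A}_tf(x,Y)=\tfrac{a(Y)}{2}\Delta_xf(x,Y)+\int_0^\infty\bigl[f(x,Y+y)-f(x,Y)\bigr]K(y,Y)\mu^i_t(x,dy),
\]
in which $Y$ only jumps upwards. The hypothesis $\sup_s\|\langle w^2,\mu^i_s\rangle\|_\infty<\infty$ together with the support condition $\mu^i_s\mathbf{1}_{y<\delta}=0$ gives $\|\langle w,\mu^i_s\rangle\|_\infty\leq w(\delta)^{-1}\|\langle w^2,\mu^i_s\rangle\|_\infty<\infty$, so the jump intensity $\int K(y,Y)\mu^i_s(x,dy)\leq w(Y)\|\langle w,\mu^i_s\rangle\|_\infty$ grows at most sublinearly in $Y$; this and sublinearity of $w$ ensure non-explosion. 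For $q_0\geq 0$, let $\rho_t$ be $\|\langle y,q_0\rangle\|_1$ times the law of $(X_t,Y_t)$ started from the probability distribution $y\,q_0(x,dy)\,dx/\|\langle y,q_0\rangle\|_1$, and set $q^i_t:=\rho_t/y$. A standard test-function computation then shows $q^i$ solves (\ref{sseqn1}); it is non-negative by construction, and mass conservation for the Markov process gives $\|\langle y,q^i_t\rangle\|_1=\|\langle y,q_0\rangle\|_1$, so $q^i\in\mathcal{M}'$. For general $q_0\in\mathcal{M}'$, decompose $q_0=q_0^+-q_0^-$ and use linearity.

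For uniqueness, let $q^1,q^2$ be two solutions and put $q=q^1-q^2$, which satisfies the homogeneous equation $q_t=\int_0^tP_{t-s}K^i_s(q_s)\,ds$ with zero initial data. I would follow the signed-pairing idea of Section \ref{idea}: differentiating $\|\langle w,|q_t|\rangle\|_1$, the diffusion term $\mathcal{L}q_t$ contributes non-positively by convexity of $|\cdot|$, so one is reduced to estimating the coagulation term. The subadditive identity
\[
w(y+y')\tfrac{y'}{y+y'}-w(y')\leq w(y),
\]
valid because $w$ is non-decreasing and subadditive, combined with $K\leq w\otimes w$, gives
\[
\int w(z)\operatorname{sgn}(q_t)(x,dz)\,K^i_s(q_t)(x,dz)\leq\|\langle w^2,\mu^i_s\rangle\|_\infty\,\langle w,|q_t|\rangle(x).
\]
Integrating in $x$ yields $\tfrac{d}{dt}\|\langle w,|q_t|\rangle\|_1\leq C\|\langle w,|q_t|\rangle\|_1$ with $C=\sup_s\|\langle w^2,\mu^i_s\rangle\|_\infty$, and Gronwall's lemma forces $q\equiv 0$.

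The technical crux is the closure of the weighted $L^1$ estimate in the uniqueness step: a crude bound on the total variation of $K^i_s(q_t)$ via $|K^i(q)|\leq K^{i+}(|q|)+K^{i-}(|q|)$ produces an unwanted $\langle w^2,|q_t|\rangle$ factor which does not close against $\langle w,|q_t|\rangle$. It is precisely the cancellation between $K^{i+}$ and $K^{i-}$ made visible by the signed pairing $\operatorname{sgn}(q_t)$, together with the subadditive inequality for $w$, that reduces the moment needed on $q$ from $w^2$ back to $w$ and matches exactly the $w^2$-moment hypothesis on $\mu^i$. For the probabilistic existence step, the one subtlety is non-explosion of $(X_t,Y_t)$, a standard consequence of sublinearity of $w$ ensuring $\int_Y^\infty dY'/w(Y')=\infty$.
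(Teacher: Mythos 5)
Your proposal takes a genuinely different route from the paper's. The paper proceeds analytically: it first solves the \emph{damped} linear equation (dropping the gain term $K^{i+}$) explicitly via a Feynman--Kac representation, proves its uniqueness by a Gronwall argument after a mass cutoff $\mathbf{1}_{y\leq z}$, then introduces a variation-of-constants formula and builds the full solution by induction over the mass layers $[n\delta,(n+1)\delta)$. This induction exploits crucially that $\mu^i$ is supported on $[\delta,\infty)$, so the gain term restricted to a given layer depends only on strictly lower layers, already constructed; non-negativity propagates through the layers. You instead propose a probabilistic jump-diffusion construction for existence and non-negativity, and a direct Gronwall estimate in the weighted norm $\|\langle w,|\cdot|\rangle\|_1$ with a signed pairing for uniqueness.

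The probabilistic interpretation is correct, and your generator for $(X_t,Y_t)$ is the right Fokker--Planck dual of (\ref{sseqn1}) after the change of weight $\rho=y\,q$. However, the non-explosion argument has a genuine gap. The Osgood condition $\int^\infty dY'/w(Y')=\infty$ rules out explosion for a pure birth process whose jump \emph{rate} is of order $w(Y)$ and whose jump \emph{sizes} are bounded. Here the jump sizes are distributed according to $K(y,Y)\mu^i_t(x,dy)$, and the drift of $Y_t$ is controlled by $\int y K(y,Y)\mu^i_t(x,dy)\leq w(Y)\int y\,w(y)\,\mu^i_t(x,dy)$; the hypotheses give $\sup_{x,t}\int w^2(y)\mu^i_t(x,dy)<\infty$, not a bound on $\int y\,w(y)\,\mu^i_t(x,dy)$. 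When $w$ is strictly sublinear (e.g.\ $w(y)=y^u$ with $u<1$, which is exactly the regime of interest in this paper), $yw(y)=y^{1+u}$ strictly dominates $w^2(y)=y^{2u}$, so the mean jump size is not under control and explosion cannot be excluded from the stated assumptions. The paper's layer-wise construction sidesteps this completely: it never needs to appeal to non-explosion because the restriction of the solution to each bounded mass window is constructed from finitely many previously built pieces.

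Your uniqueness step also relies on a formal time-differentiation of $\|\langle w,|q_t|\rangle\|_1$ together with the a priori finiteness of that quantity, neither of which follows readily from the integral formulation and the hypothesis $\|\langle y,|q_0|\rangle\|_1<\infty$. The paper's argument avoids differentiation entirely: it proves uniqueness of the damped equation after the cutoff $\mathbf{1}_{y\leq z}$ (which makes the killing rate $c_s$ bounded), then propagates uniqueness through the mass layers using the $\delta$-support of $\mu^i$, so that the gain term never appears in the Gronwall step at all. That $\delta$-gap bootstrap is the key device in the paper's proof, and it is entirely absent from your argument; supplying it (or an equivalent Lyapunov control on the first moment of $\mu^i_t$ to close the probabilistic approach) is what is missing.
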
 
\begin{proof}
Let $c^i_s(x,y)=\int_0^\infty K(y,y')\mu^i_s(x,dy')$ and (\ref{sseqn1}) becomes
\begin{displaymath}
q^i_t+\int_0^tP_{t-s}c_sq^i_s ds=P_tq_0+\int_0^t P_{t-s}K^{i+}_s(q^i_s)ds.
\end{displaymath}
Suppose $q_0\geq 0$ and consider first the equation
\begin{equation}\label{sseqn3}
\lambda^i_t+\int_0^tP_{t-s}c_s\lambda^i_s ds=P_tq_0.
\end{equation}
Let $B^{x,x',a,t}$ be the conditional Brownian motion with diffusivity $a$ being at $x$ at time $0$ and at $x'$ at $t$. Then by Feynman-Kac formula, we have
\begin{displaymath}
\lambda^i_t(x,dy)=\int_{\mathbb{R}^d}q_0(z,dy)\mathbb{E}[exp(-\int_0^tc^i_s(B^{z,x,a(y),t}_s,y)ds)]p^{t,z,x}(y)dz
\end{displaymath}
is a solution of this equation and in particular, $\lambda^i_t$ is non-negative. Then, we want to show that this is the unique solution of (\ref{sseqn3}). By linearity, it suffices to show that zero solution is the unique solution of
\begin{displaymath}
\lambda^i_t+\int_0^tP_{t-s}c_s\lambda^i_s ds=0.
\end{displaymath}
For $z>0$, we have
\begin{align*}
\|\langle \mathbf{1}_{y\leq z},|\lambda^i_t|\rangle\|_1&\leq\|\langle \mathbf{1}_{y\leq z},\int_0^tP_{t-s}c_s|\lambda^i_s| ds\rangle\|_1\\
&\leq\|\langle 1,\mathbf{1}_{y\leq z}\int_0^tc_s|\lambda^i_s| ds\rangle\|_1.\\
\end{align*}
Recall that $\sup_{t\leq T}\|\langle w^2,\mu^i_t \rangle\|_\infty<\infty$, we have $c_s\mathbf{1}_{y\leq z}$ is bounded. Thus, we can apply Gronwall's inequality to show $\|\langle \mathbf{1}_{y\leq z},|\lambda^i_t|\rangle\|_1=0$ and as this works for all $z>0$, we have $\lambda^i_t=0$.

Now, we want to show that
\begin{displaymath}
q^i_t+\int_0^tP_{t-s}c_sq^i_s ds=P_tq_0+\int_0^t P_{t-s}K^{i+}_s(q^i_s)ds
\end{displaymath}
has a unique solution. Again, we assume without loss of generality that $q_0=0$. Let $\lambda^i$ be the unique solution of (\ref{sseqn3}). Note that, $\mathbf{1}_{y<2\delta}K^{i+}_s(q^i_s)=0$, we have $\mathbf{1}_{y<2\delta}q^i_s=\mathbf{1}_{y<2\delta}\lambda^i_s=0$. Then, we have $\mathbf{1}_{2\delta\leq y<3\delta}K^{i+}_s(q^i_s)=0$ and thus $\mathbf{1}_{2\delta\leq y<3\delta}q^i_s=0$, and we can keep going. This proves the uniqueness of $q^i_t$. 

Now, as we showed equation (\ref{sseqn3}) has a unique solution, similarly, we can show that conditional on $\lambda_s$,
\begin{equation}\label{sseqn31}
\lambda^i_{t}+\int_s^{t}P_{t-t'}c_{t'}\lambda^i_{t'}dt'=P_{t-s}\lambda_s
\end{equation}
has a unique solution too. Thus for $t\geq s$, we define $f_{s,t}:S\rightarrow S$ such that if $(\lambda_{t'})_{s\leq t'\leq t}$ solves (\ref{sseqn31}) then $f_{s,t}(\lambda_s)=\lambda_t$. Now, we will show a version of variation of constant formula,
\begin{equation}\label{sseqn32}
q^i_t=\lambda^i_t+\int_0^tf_{s,t}(K^{i+}_s(q^i_s))ds.
\end{equation}
Suppose $q^i_t$ is a solution of (\ref{sseqn32}), we want to show that then $q^i_t$ is indeed a solution of (\ref{sseqn1}). By subtracting (\ref{sseqn3}) from (\ref{sseqn1}), it suffices to show 
\begin{displaymath}
q^{i}_t-\lambda^i_t+\int_0^tP_{t-s}K^{i-}_s(q^i_s-\lambda^i_s)ds=\int_0^t P_{t-s}K^{i+}_s(q^i_s)ds.
\end{displaymath}
Plugging (\ref{sseqn32}) into the left hand side of this equation, we have
\begin{align*}
&q^{i}_t-\lambda^i_t+\int_0^tP_{t-s}K^{i-}_s(q^i_s-\lambda^i_s)ds\\&=\int_0^tf_{s,t}(K^{i+}_s(q^i_s))ds
+\int_0^tP_{t-s}K^{i-}_s(\int_0^sf_{s',s}(K^{i+}_{s'}(q^i_{s'}))ds')ds\\
&=\int_0^tf_{s,t}(K^{i+}_s(q^i_s))ds
+\int_0^t\int_s^tP_{t-s'}K^{i-}_{s'}(f_{s,s'}(K^{i+}_{s}(q^i_{s}))ds')ds\\
&=\int_0^t[f_{s,t}(K^{i+}_s(q^i_s))+\int_s^tP_{t-s'}K^{i-}_{s'}(f_{s,s'}(K^{i+}_{s}(q^i_{s}))ds')]ds\\
&=\int_0^t P_{t-s}K^{i+}_s(q^i_s)ds,
\end{align*}
where we used (\ref{sseqn31}) for the last step. This shows that any solution of (\ref{sseqn32}) is also a solution of (\ref{sseqn1}). Now, we construct a solution to (\ref{sseqn32}). For natural number $n$, We note that $\mathbf{1}_{(n)\delta\leq y<(n+1)\delta}K^{i+}_s(q^i_s)$ depends only on $\mathbf{1}_{\delta\leq y<(n)\delta}(q^i_s)$. So, we can inductively give a solution of (\ref{sseqn32}) by letting 
\begin{displaymath}
\mathbf{1}_{y<2\delta}q^i_s=\mathbf{1}_{y<2\delta}\lambda^i_s
\end{displaymath}
and for $n>1$
\begin{displaymath}
\mathbf{1}_{n\delta\leq y<(n+1)\delta}q^i_s=\mathbf{1}_{n\delta\leq y<(n+1)\delta}[\lambda^i_s+\int_0^tf_{s,t}(K^{i+}_s(\mathbf{1}_{ y<n\delta}q^i_s))ds].
\end{displaymath}
Moreover, if $q_0$ is non-negative, then $\lambda^i$ is non-negative too, and using the above construction, we can see by induction that $q_t$ is non-negative too.
\end{proof}
\begin{prop}
For a solution $q^i$ of (\ref{sseqn1}), we have, for $s\leq t$,
\begin{displaymath}
\|\langle w,|q_t^i|\rangle\|_1\leq\|\langle w,|q_s^i|\rangle\|_1
\end{displaymath}
\end{prop}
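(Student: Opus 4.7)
My plan is to exploit linearity of equation (\ref{sseqn1}) together with the non-negativity preservation proved in Proposition \ref{ssprop32}. Since the equation is linear in $q$, I would first Jordan-decompose $q_s^i = q_s^{i,+} - q_s^{i,-}$ into its positive and negative parts at time $s$, and then let $\tilde q^{+}$ and $\tilde q^{-}$ denote the solutions of (\ref{sseqn1}) on the interval $[s,T]$ with initial data $q_s^{i,\pm}$ at time $s$. This relies on a straightforward restart property of the mild formulation of (\ref{sseqn1}) together with the uniqueness part of Proposition \ref{ssprop32}. By the positivity preservation clause in that proposition, $\tilde q^{\pm} \geq 0$, and by uniqueness and linearity $q_t^i = \tilde q_t^{+} - \tilde q_t^{-}$ for $t \in [s,T]$.

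Set $\nu_t = \tilde q_t^{+} + \tilde q_t^{-}$. Then $\nu$ is itself a non-negative solution of (\ref{sseqn1}) on $[s,T]$ (again by linearity), with $\nu_s = |q_s^i|$ and $|q_t^i| \leq \nu_t$ as measures, so $\|\langle w,|q_t^i|\rangle\|_1 \leq \|\langle w,\nu_t\rangle\|_1$. It therefore suffices to show that $\|\langle w,\nu_t\rangle\|_1$ is non-increasing in $t$.

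To do this, I would integrate the equation for $\nu$ against $w(y)$ and then over $x\in\mathbb{R}^d$. Because $P_{t-s}$ is integration against the transition density of a Brownian motion, it preserves spatial integrals, which reduces the computation to
\begin{displaymath}
\|\langle w,\nu_t\rangle\|_1 - \|\langle w,\nu_s\rangle\|_1 = \int_s^t\int_{\mathbb{R}^d}\int_0^\infty\int_0^\infty \left[\frac{y'\,w(y+y')}{y+y'} - w(y')\right] K(y,y')\,\mu_{s'}^i(x,dy)\,\nu_{s'}(x,dy')\,dx\,ds'.
\end{displaymath}
I would then use sublinearity of $w$, interpreted as $w(x)/x$ being non-increasing (consistent with the examples $w(y)=cy^u$ with $0<u\leq 1$ used elsewhere in the paper), to conclude $y'\,w(y+y') \leq (y+y')\,w(y')$. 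The bracketed factor is thus non-positive, making the right-hand side $\leq 0$, and the desired monotonicity follows.

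The main technical obstacle is justifying the Fubini manipulations and, more subtly, verifying that the mild equation can be tested against $w(y)\,dy\,dx$ in the first place. This requires controlling $\|\langle w, K^{i\pm}_{s'}(\nu_{s'})\rangle\|_1$ using the bilinear bound $K(y,y')\leq w(y)w(y')$, the hypothesis $\sup_{s'\leq T}\|\langle w^2,\mu_{s'}^i\rangle\|_\infty < \infty$ from the assumption of Theorem \ref{ssthm1}, the finiteness of $\|\langle y,\nu_{s'}\rangle\|_1$ built into the definition of a solution, and the sublinearity bound $w(y)\leq Cy$ valid on the support $\{y\geq\delta\}$ of $\nu$ (inherited from $\mu_0$ via condition (ii) and the iterative construction of $q^i$ in Proposition \ref{ssprop32}). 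The restart property for (\ref{sseqn1}) should also be spelled out carefully from the explicit form of $P_t$.
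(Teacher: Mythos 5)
Your proof follows essentially the same approach as the paper. Both arguments reduce to the case of non-negative initial data (the paper invokes "by linearity" where you spell this out more explicitly via the Jordan decomposition $q_s^{i,\pm}$, the positivity preservation from Proposition \ref{ssprop32}, and the dominating non-negative solution $\nu_t = \tilde q_t^+ + \tilde q_t^-$), and both rest on the identical sublinearity inequality $\tfrac{y'}{y+y'}w(y+y') - w(y') \le 0$ (i.e. $w(x)/x$ non-increasing) to show the coagulation operator does not increase the $w$-weighted $L^1$ mass, after using that $P_t$ preserves spatial integrals. The only small stylistic difference is in the Fubini justification: the paper handles it by truncating with $\mathbf{1}_{y\le z}$ and then letting $z\to\infty$, whereas you sketch the integrability bounds directly via $K\le w\otimes w$, the $\|\langle w^2,\mu^i\rangle\|_\infty$ hypothesis, and the lower-bounded mass support; either route works.
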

\begin{proof}
Assume first $q_0\geq 0$, then for any $z>0$, we have
\begin{displaymath}
\sup_{s\leq t}\|\langle \mathbf{1}_{y< z}w,K^{i-}_s(q^i_s)\rangle\|_1\leq \sup_{s\leq t}[\|\langle \mathbf{1}_{y< z}w^2,q^i_s\rangle\|_1\|\langle w,\mu^i_s\rangle\|_\infty]<\infty.
\end{displaymath}
So, we have
\begin{displaymath}
\|\langle \mathbf{1}_{y\leq z} w,q^i_t\rangle\|_1=\|\langle \mathbf{1}_{y\leq z}w,q^i_0\rangle\|_1+\int_0^t \|\langle \mathbf{1}_{y\leq z}w,K^{i+}_s(q^i_s)- K^{i-}_s(q^i_s)\rangle\|_1 ds,
\end{displaymath}
where
\begin{align*}
&\langle \mathbf{1}_{y\leq z}w,K^{i+}_s(q^i_s)- K^{i-}_s(q^i_s)\rangle&\\
&\leq\int_0^{z} w(y)\int_0^y\frac{y'}{y}K(y',y-y')q^i_s(dy')\mu^i_s(d(y-y'))- w(y)q^i_s(dy)\int_0^\infty K(y,y')\mu^i_s(dy')\\
&\leq\int_0^{z}\int_0^\infty K(y,y')\mu^i_s(dy')q^i_s(dy)[w(y+y')\frac{y}{y+y'}-w(y)]\\
&\leq 0.
\end{align*}
Therefore,
\begin{displaymath}
\|\langle \mathbf{1}_{y\leq z} w,q^i_t\rangle\|_1\leq\|\langle \mathbf{1}_{y\leq z}w,q^i_0\rangle\|_1,
\end{displaymath}
for all $z\geq 1$. Let $z\rightarrow\infty$, we conclude that
\begin{displaymath}
\|\langle w,q^i_t\rangle\|_1\leq\|\langle w,q^i_0\rangle\|_1
\end{displaymath}
when $q^i_0\geq 0$. By linearity, we can extend this to
\begin{displaymath}
\|\langle w,|q^i_t|\rangle \|_1\leq \|\langle w,|q^i_0|\rangle \|_1,
\end{displaymath}
without the condition $q^i_0\geq 0$. Similarly, we also have
\begin{displaymath}
\|\langle w,|q^i_t|\rangle \|_1\leq \|\langle w,|q^i_s|\rangle \|_1,
\end{displaymath}
whenever $s\leq t$. 
\end{proof}
Now, for $t\geq s$ let $\Phi^i_{s,t}:S\rightarrow S$ be the map such that if $(q^i_{t'})_{s\leq t'\leq t}$ solves 
\begin{displaymath}
q^i_{t'}+\int_s^{t'} P_{t'-t''}K^{i-}_{t''}(q^i_{t''})dt''=P_{t'-s}q^i_s+\int_s^{t'} P_{t'-t''}K^{i+}_{t''}(q^i_{t''})dt'',
\end{displaymath}
then $\Phi^i_{s,t}(q^i_s)=q_t^i$. We want to verify that we can apply the variation of constant formula in the following way. \begin{prop}
\begin{displaymath}
\mu^2_t-\mu^1_t=\int_0^t\Phi^1_{s,t}[(K^2_s-K^1_s)(\mu^2_s)]ds.
\end{displaymath}
\end{prop}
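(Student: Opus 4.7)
The plan is to recognise this identity as the variation-of-constants (Duhamel) formula for the linearised equation (\ref{sseqn1}), and to prove it by showing that both sides solve the same inhomogeneous linear equation with the same initial data, then invoking the uniqueness assertion in Proposition \ref{ssprop32}. By the defining property of $\Phi^1_{s,t}$, once we identify the correct forcing term, the identity reduces to a standard Duhamel calculation carried out in the signed-measure setting $\mathcal{M}'$.

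First I would use the linearity of $K^{i\pm}_s$ in its argument, together with the fact that each $\mu^i$ solves (\ref{sseqn1}) with its own coefficients. Adding and subtracting $K^{1\pm}_s(\mu^2_s)$ inside the time integrals of the $i=2$ equation and subtracting the $i=1$ equation shows that $\nu_t := \mu^2_t - \mu^1_t$ satisfies
\begin{displaymath}
\nu_t + \int_0^t P_{t-s}K^{1-}_s(\nu_s)\,ds = \int_0^t P_{t-s}K^{1+}_s(\nu_s)\,ds + \int_0^t P_{t-s}(K^2_s - K^1_s)(\mu^2_s)\,ds,
\end{displaymath}
with $\nu_0 = 0$. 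This is the inhomogeneous version of (\ref{sseqn1}) with $i=1$ and forcing $F_s := (K^2_s - K^1_s)(\mu^2_s)$.

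Next I would show that the candidate $\rho_t := \int_0^t \Phi^1_{s,t}(F_s)\,ds$ solves the same inhomogeneous equation. For each fixed $s\in [0,t]$ the curve $u \mapsto \phi^{(s)}_u := \Phi^1_{s,u}(F_s)$ satisfies (\ref{sseqn1}) for $i=1$ on $[s,t]$ with initial datum $F_s$ at time $s$; integrating this identity over $s\in[0,t]$ and swapping $\int_0^t \int_s^t = \int_0^t \int_0^u$ by Fubini in the convolution terms reproduces exactly the displayed equation for $\rho_t$. Then $\nu - \rho$ solves (\ref{sseqn1}) with $i=1$ and zero initial data, so Proposition \ref{ssprop32} (applied by linearity, since any two solutions of the inhomogeneous equation differ by a solution of the homogeneous one) forces $\nu = \rho$, which is the claim.

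The main obstacle will be the integrability bookkeeping required to legitimately place every quantity in $S$ and to justify the Fubini swap. Concretely, I must verify that $F_s\in S$ for a.e.\ $s\le T$, that $s\mapsto \Phi^1_{s,t}(F_s)$ is integrable in the $\|\langle w,|\cdot|\rangle\|_1$ norm uniformly on $[0,T]$, and that $\Phi^1_{s,t}$ extends linearly and boundedly from $\mathcal{M}$ to $\mathcal{M}'$ via Hahn decomposition. For the first two points, I would exploit the bound $K(y,y')\le w(y)w(y')$, the preservation $\mu^i_t\mathbf{1}_{y<\delta}=0$ of the lower mass cutoff (which effectively makes $w$ a linear weight above $\delta$, so $y$-moments and $w$-moments become comparable), the hypothesis $\sup_{s\le T}\|\langle w^2,\mu^i_s\rangle\|_\infty <\infty$, and the contraction $\|\langle w,|\Phi^1_{s,t}q|\rangle\|_1 \le \|\langle w,|q|\rangle\|_1$ established in the previous proposition. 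Once these bounds are in hand, the Fubini exchange and the application of Proposition \ref{ssprop32} to the difference $\nu - \rho$ become routine.
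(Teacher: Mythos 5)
Your proof is correct, but it takes a genuinely different route from the paper. The paper replaces $\mu^2_s$ by the unknown $\nu_s$ in the integrand, so the identity becomes a self-referential fixed-point equation $\nu_t-\mu^1_t=\int_0^t\Phi^1_{s,t}[(K^2_s-K^1_s)(\nu_s)]\,ds$; it then proves the weighted-$L^1$ contraction bound $\|\langle\mathbf{1}_{y\leq z}w,|\Phi^1_{s,t}[(K^2_s-K^1_s)(\nu_s)]|\rangle\|_1\leq C\|\langle\mathbf{1}_{y\leq z}w,|\nu_s|\rangle\|_1$ (using $\|\langle w^2,\mu^i_s\rangle\|_\infty<\infty$) and runs a Picard iteration to get existence and uniqueness of $\nu$, finally identifying $\nu$ with $\mu^2$. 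You instead argue in the classical Duhamel way: subtract the two copies of (\ref{sseqn1}) to exhibit $\mu^2_t-\mu^1_t$ as the solution of the inhomogeneous $i=1$ equation with the fixed forcing $F_s=(K^2_s-K^1_s)(\mu^2_s)$ and zero initial data, verify by a Fubini swap that $\rho_t=\int_0^t\Phi^1_{s,t}(F_s)\,ds$ solves the same equation, and then quote the uniqueness in Proposition \ref{ssprop32} for the homogeneous difference. Your route is the more transparent of the two: the forcing term is explicit rather than implicit in a fixed point, and you do not need the extra step of explaining why the unique fixed point equals $\mu^2$ (which the paper leaves rather compressed, relying on the equivalence of the fixed-point equation with (\ref{sseqn1}) for $i=2$). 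What the paper's route buys is that the contraction bound it proves along the way is exactly the estimate reused a few lines later in the proof of Theorem \ref{ssthm1}, so nothing is wasted. Both arguments ultimately need the same two ingredients — the Duhamel/Fubini manipulation, already carried out once in the proof of Proposition \ref{ssprop32}, and the $K(y,y')\le w(y)w(y')$ plus $\sup_s\|\langle w^2,\mu^i_s\rangle\|_\infty<\infty$ bounds to control the forcing in the $\|\langle w,|\cdot|\rangle\|_1$ norm — and the bookkeeping you flag at the end is exactly the estimate the paper writes out.
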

\begin{proof}
We will start with showing that there exists $\nu\in\mathcal{M}'$ such that
\begin{displaymath}
\nu_t-\mu^1_t=\int_0^t\Phi^1_{s,t}[(K^2_s-K^1_s)(\nu_s)]ds.
\end{displaymath}
For any $z>0$, we have from earlier result that
\begin{displaymath}
\|\langle \mathbf{1}_{y\leq z}w,|\Phi^1_{s,t}[(K^2_s-K^1_s)(\nu_s)]|\rangle \|_1\leq \|\langle \mathbf{1}_{y\leq z}w,|(K^2_s-K^1_s)(\nu_s)|\rangle \|_1.
\end{displaymath}
For $i=1,2$, we have
\begin{align*}
\|\langle \mathbf{1}_{y\leq z}w,|K^i_s(\nu_s)|\rangle \|_1&\leq\|\int_0^z\int_0^\infty K(y,y')\mu^i_s(dy')(|\nu_s(dy)w(y)|+|\nu_s(dy)w(y+y')|)\|_1\\
&\leq \|\int_0^z\int_0^\infty w(y')w(y)\mu^i_s(dy')(|\nu_s(dy)w(y)|+|\nu_s(dy)w(y+y')|)\|_1\\
&\leq 2\|\langle w,\mu^i_s\rangle\|_\infty w(2z)\|\langle \mathbf{1}_{y\leq z}w,|(\nu_s)|\rangle \|_1.
\end{align*}
So, we conclude that
\begin{displaymath}
\|\langle \mathbf{1}_{y\leq z}w,|\Phi^1_{s,t}[(K^2_s-K^1_s)(\nu_s)]|\rangle \|_1\leq C\|\langle \mathbf{1}_{y\leq z}w,|(\nu_s)|\rangle \|_1
\end{displaymath}
for some constant $C$. As this works for all $z>0$, we can use iteration scheme to show the existence and uniqueness of $\nu$. Therefore, we conclude $\nu=\mu^2$. 
\end{proof}
Now, we have enough tools to prove Theorem \ref{ssthm1}.
\begin{proof}
We have
\begin{align*}
\|\langle w,|\mu^2_t-\mu^1_t|\rangle\|_1&=\|\langle w,|\int_0^t\Phi^1_{s,t}[(K^2_s-K^1_s)(\mu^2_s)]ds|\rangle\|_1\\&
\leq \|\langle w,\int_0^t|\Phi^1_{s,t}[(K^2_s-K^1_s)(\mu^2_s)]|ds\rangle\|_1\\
&\leq\|\langle w,\int_0^t|(K^2_s-K^1_s)(\mu^2_s)|ds\rangle\|_1.
\end{align*}
Now, we also have
\begin{align*}
&\|\langle w,|(K^2_s-K^1_s)(\mu^2_s)|ds\rangle\|_1\\&\leq \|\int_0^\infty\int_0^\infty|(\mu^2_s-\mu^1_s)(dy)|\mu^2_s(dy')K(y,y')(w(y')+\frac{y'}{y+y'}w(y+y'))\|_1\\
&\leq 2\|\int_0^\infty\int_0^\infty|(\mu^2_s-\mu^1_s)(dy)|\mu^2_s(dy')K(y,y')w(y')\|_1\\
&\leq 2\|\int_0^\infty\int_0^\infty|(\mu^2_s-\mu^1_s)(dy)|\mu^2_s(dy')w(y)w(y')^2\|_1\\
&\leq 2\|\langle w,|\mu^2_s-\mu^1_s|\rangle\|_1\|\langle w^2,\mu^2_s\rangle\|_\infty.
\end{align*}
Because we assumed $\|\langle w^2,\mu^2_s\rangle\|_\infty<C$ for some constant C, we have
\begin{displaymath}
\|\langle w,|\mu^2_t-\mu^1_t|\rangle\|_1\leq 2C\int_0^t\|\langle w,|\mu^2_s-\mu^1_s|\rangle\|_1ds.
\end{displaymath}
Also by definition of solutions, we know that $\|\langle w,|\mu^2_t-\mu^1_t|\rangle\|_1<\infty$. So, we can apply Gronwall's inequality to obtain $\mu^1_t=\mu^2_t$ almost surely. This concludes the proof of Theorem \ref{ssthm1}.
\end{proof}
\subsection{Uniqueness part of Theorem \ref{ssthm2}}
Now, we will show the uniqueness part of Theorem \ref{ssthm2}. 
\begin{proof}
By Theorem \ref{ssthm1}, it suffices to show that for some $T>0$, if $\mu$ is a solution to (\ref{eqn}), then $\sup_{t\leq T}\|\langle w^2,\mu_t\rangle\|_\infty<\infty$. For this, we will use a similar approach as in Section 5 of \cite{BM}. For any $z>0$, apply $P_s$ to equation (\ref{eqn}), multiply by $\mathbf{1}_{y\leq z}w^2$ and integrate over $\mathbb(0,\infty)$ to obtain, for all $s,t\geq 0$,
\begin{displaymath}
\langle \mathbf{1}_{y\leq z}w^2,P_s\mu_t\rangle\leq \langle \mathbf{1}_{y\leq z}w^2,P_{s+t}\mu_0\rangle+\int_0^t \langle \mathbf{1}_{y\leq z}w^2,P_{s+t-r}K(\mu_r)\rangle dr.
\end{displaymath}

Summing up the inequalities
\begin{displaymath}
\frac{y}{y+y'}w^2(y+y')p(y+y')-w^2(y)p(y)\leq C[w(y)w(y')p(y)+w(y)w(y')p(y')]
\end{displaymath}
and
\begin{displaymath}
\frac{y'}{y+y'}w^2(y+y')p(y+y')-w^2(y')p(y')\leq C[w(y)w(y')p(y)+w(y)w(y')p(y')],
\end{displaymath}
we know that
\begin{displaymath}
w^2(y+y')p(y+y')-w^2(y)p(y)-w^2(y')p(y')\leq 2C[w(y)w(y')p(y)+w(y)w(y')p(y')].
\end{displaymath}
We obtain
\begin{align*}
&\langle \mathbf{1}_{y\leq z}w^2,P_{s+t-r}K(\mu_r)\rangle\\
&\leq 2C\int_{\mathbb{R}^d}\int_0^z\int_0^z w(y)w(y')p^{s+t-r,x,x'}(y')K(y,y')\mu_r(x',dy)\mu_r(x',dy')dx'\\
&\leq 2C\int_{\mathbb{R}^d}\int_0^z\int_0^z w(y)^2w(y')^2p^{s+t-r,x,x'}(y')\mu_r(x',dy)\mu_r(x',dy')dx'\\
&\leq 2C\|\langle \mathbf{1}_{y\leq z}w^2,\mu_r\rangle\|_\infty\langle \mathbf{1}_{y\leq z}w^2,P_{s+t-r}\mu_r\rangle(x).
\end{align*}
Now, set $h(t)=\sup_{s\geq 0}\|\langle\mathbf{1}_{y\leq z}w^2,P_s\mu_t\rangle\|_\infty$. We then obtain
\begin{displaymath}
h(t)\leq \sup_{s\geq0}\|\langle\mathbf{1}_{y\leq z}w^2,P_{s+t}\mu_0\rangle\|_\infty+2C\int_{0}^t h(s)^2ds,
\end{displaymath}
and this implies
\begin{displaymath}
h(t)\leq [\sup_{s\geq0}\|\langle \mathbf{1}_{y\leq z}w^2,P_{s+t}\mu_0\rangle\|_\infty-2Ct]^{-1}.
\end{displaymath}
As this is true for all $z$, we can set $T=\frac{1}{2C}\sup_{s\geq 0}\|\langle w^2,P_{s+t}\mu_0\rangle\|_\infty$ and conclude $\sup_{t\leq T}\|\langle w^2,\mu_t\rangle\|_\infty<\infty$ as desired.
\end{proof}
\subsection{Uniqueness part of Theorem \ref{ssthm3}}
Now, we will use the same strategy to prove the uniqueness part of Theorem \ref{ssthm3}. 
\begin{proof}
In case (a), similar to earlier, we would have
\begin{align*}
&\langle \mathbf{1}_{y\leq z}w^2,P_{s+t-r}K(\mu_r)\rangle\\
&\leq 2C\int_{\mathbb{R}^d}\int_0^z\int_0^z w(y)w(y')p^{s+t-r,x,x'}(y')K(y,y')\mu_r(x',dy)\mu_r(x',dy')dx'\\
&\leq 2C\int_{\mathbb{R}^d}\int_0^z\int_0^z (w(y)^2w(y')v(y')+w(y)v(v)w^2(y'))p^{s+t-r,x,x'}(y')\mu_r(x',dy)\mu_r(x',dy')dx'\\
&\leq C[\|\langle \mathbf{1}_{y\leq z}w^2,\mu_r\rangle\|_\infty\langle \mathbf{1}_{y\leq z}wv,P_{s+t-r}\mu_r\rangle(x)+\|\langle\mathbf{1}_{y\leq z}wv,\mu_r\rangle\|_\infty\langle \mathbf{1}_{y\leq z}w^2,P_{s+t-r}\mu_r\rangle(x)].
\end{align*} 
Note that,
\begin{displaymath}
\langle wv,P_s\mu_t\rangle\leq \langle wv,P_{s+t}\mu_0\rangle+\int_0^t \langle wv,P_{s+t-r}K(\mu_r)\rangle dr,
\end{displaymath}
and since $wvp$ is sublinear, we will have
\begin{displaymath}
\langle wv,P_{s+t-rK(\mu_r)}\rangle\leq 0,
\end{displaymath}
and thus
\begin{displaymath}
\langle wv,P_s\mu_t\rangle\leq \langle wv,P_{s+t}\mu_0\rangle<c,
\end{displaymath}
for some constant c. So, we obtain
\begin{displaymath}
h(t)\leq \sup_{s\geq0}\|\langle\mathbf{1}_{y\leq z}w^2,P_{s+t}\mu_0\rangle\|_\infty+2cC\int_0^th(s)ds,
\end{displaymath}
and we can apply Gronwall to conclude that $\sup_{t\leq T}\|\langle w^2,\mu_t\rangle\|_\infty<\infty$ for any $T>0$.

For case $(b)$, we set $h(t)=\sup_{s\geq 0}(1+s+t)^{1+\epsilon}\|\langle\mathbf{1}_{y\leq z}w^2,P_s\mu_t\rangle\|_\infty$.
Then, by similar computations as earlier, we will have
\begin{align*}
\langle \mathbf{1}_{y\leq z}w^2,P_s\mu_t\rangle &\leq \langle \mathbf{1}_{y\leq z}w^2,P_{s+t}\mu_0\rangle+\int_0^t \langle \mathbf{1}_{y\leq z}w^2,P_{s+t-r}K(\mu_r)\rangle dr\\
&\leq \langle \mathbf{1}_{y\leq z}w^2,P_{s+t}\mu_0\rangle+2C\int_0^t\|\langle \mathbf{1}_{y\leq z}w^2,\mu_r\rangle\|_\infty\langle \mathbf{1}_{y\leq z}w^2,P_{s+t-r}\mu_r\rangle.
\end{align*}
So, we obtain
\begin{displaymath}
h(t)\leq c+C\int_{0}^t \frac{h(s)^2}{(1+s)^{1+\epsilon}}ds.
\end{displaymath}
If, for example, $c$ is small enough such that
\begin{displaymath}
4c^2C\int_{0}^\infty\frac{1}{(1+s)^{1+\epsilon}}ds<c,
\end{displaymath}
then we have $h(t)<2c$ for all $t$ and thus we have uniqueness of the global solution.
\end{proof}
\section{Existence}
In this section we will prove Theorem \ref{ssthm2} and Theorem \ref{ssthm3}.
We consider the following linear PDE
\begin{equation}\label{sseqn5}
q_t=P_tq_0+\int_0^t P_{t-s}K^\nu_s(q_s)ds
\end{equation}
for $t\leq T$ with $\nu_0=\mu_0=q_0$ and $\nu_s$ non-negative satisfying $\sup_{s\leq T}\|\langle w^2,\nu_s\rangle\|_\infty\leq c$. Proposition \ref{ssprop32} tells us the existence, uniqueness and non-negativity of $q$.

Now, let $G$ be the set of $\tau\in\mathcal{M}$ such that $\sup_{s\leq T}\|\langle y,\tau_s\rangle\|_1<\infty$ and $H$ be the set of $\tau\in G$ such that $\sup_{s\leq T}\|\langle w^2,\tau_s\rangle\|_\infty<\infty$. We can then define function $f:H\rightarrow G$ so that for any $\nu\in H$, $f(\nu)=q$, where $q$ is the solution of ($\ref{sseqn5}$). We aim to construct solutions using iteration scheme with $f$. We will now give a bound on $\|\langle w^2,q_t\rangle\|_\infty$. 
\begin{prop}\label{propbrownian}
Assume
\begin{displaymath}
\frac{y}{y+y'}w^2(y+y')p(y+y')-w^2(y)p(y)\leq C[w(y)w(y')p(y)+w(y)w(y')p(y')].
\end{displaymath}
then we can find $c>0$ and $T>0$ so that 
\begin{displaymath}\sup_{t\leq T}\sup_{s\geq 0}\|\langle w^2,P_sf(\nu)_t\rangle\|_\infty\leq c
\end{displaymath} 
if \begin{displaymath}\sup_{t\leq T}\sup_{s\geq 0}\|\langle w^2,P_s\nu_t\rangle\|_\infty\leq c.\end{displaymath}
\end{prop}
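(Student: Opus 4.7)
The plan is to adapt the energy-type estimate from the uniqueness parts of Theorems \ref{ssthm2} and \ref{ssthm3}, exploiting the fact that here $\nu$ is held fixed while $q = f(\nu)$ is the unknown, so the resulting Gronwall inequality is linear rather than quadratic and closes for short time.

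First, I would note that $q := f(\nu) \geq 0$ by Proposition \ref{ssprop32}, apply $P_s$ to (\ref{sseqn5}), and pair both sides with the truncated test function $\mathbf{1}_{y \leq z} w^2$ (with $z > 0$ a truncation parameter to ensure integrability), obtaining
\[
\langle \mathbf{1}_{y \leq z} w^2, P_s q_t\rangle(x) = \langle \mathbf{1}_{y \leq z} w^2, P_{s+t}\mu_0\rangle(x) + \int_0^t \langle \mathbf{1}_{y \leq z} w^2, P_{s+t-r} K^\nu_r(q_r)\rangle(x)\, dr,
\]
whose first term is at most $M := \sup_{u>0}\|\langle w^2, P_u\mu_0\rangle\|_\infty$, which is finite by hypothesis.

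Next, combining $K^{\nu+}$ and $K^{\nu-}$ into a single integral against $K(y,y')\,\nu_r(x',dy)\,q_r(x',dy')$ exactly as in the uniqueness proof of Theorem \ref{ssthm2}, the dependence on the mass variables reduces to the bracket $\mathbf{1}_{y+y' \leq z} \frac{y'}{y+y'} w^2(y+y') p(y+y') - \mathbf{1}_{y' \leq z} w^2(y') p(y')$, with $p(\cdot) = p^{s+t-r,x',x}(\cdot)$. The hypothesis (applied with $y \leftrightarrow y'$, its right-hand side being symmetric) together with a case split on the two indicators bounds this bracket by $C[w(y)w(y')p(y) + w(y)w(y')p(y')]\mathbf{1}_{y\leq z,\, y' \leq z}$. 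Using $K(y,y') \leq w(y)w(y')$ and factoring the double integral in $y,y'$ then yields
\[
\langle \mathbf{1}_{y \leq z} w^2, P_u K^\nu_r(q_r)\rangle(x) \leq C\big[\|\langle \mathbf{1}_{y \leq z} w^2, \nu_r\rangle\|_\infty \langle \mathbf{1}_{y \leq z} w^2, P_u q_r\rangle(x) + \|\langle \mathbf{1}_{y \leq z} w^2, q_r\rangle\|_\infty \langle \mathbf{1}_{y \leq z} w^2, P_u \nu_r\rangle(x)\big].
\]

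Setting $h_z(t) := \sup_{s\geq 0}\|\langle \mathbf{1}_{y \leq z} w^2, P_s q_t\rangle\|_\infty$ and using the hypothesis $\|\langle w^2, P_s\nu_t\rangle\|_\infty \leq c$ together with $\|\langle \mathbf{1}_{y \leq z} w^2, q_r\rangle\|_\infty \leq h_z(r)$ gives the linear Gronwall-type inequality $h_z(t) \leq M + 2Cc\int_0^t h_z(r)\,dr$ for $t \leq T$, so $h_z(t) \leq Me^{2CcT}$. Monotone convergence in $z$ then gives $\sup_{t\leq T}\sup_{s\geq 0}\|\langle w^2, P_s q_t\rangle\|_\infty \leq Me^{2CcT}$; choosing $c := 2M$ and $T := (\log 2)/(4CM)$ makes this equal to $c$. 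The main obstacle I expect is the careful bookkeeping of the indicators $\mathbf{1}_{y+y'\leq z}$ and $\mathbf{1}_{y'\leq z}$ when reducing the bracket to the (indicator-free) hypothesis, which requires checking the three regions $\{y+y' \leq z\}$, $\{y' \leq z < y+y'\}$ and $\{y' > z\}$ separately, along with justifying Fubini and the passage $z \to \infty$ using the finite-mass assumption $\|\langle y,\mu_0\rangle\|_1 < \infty$.
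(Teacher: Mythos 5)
Your proof takes essentially the same route as the paper: pair the Duhamel formula with the truncated test function $\mathbf{1}_{y\leq z}w^2$, bound the coagulation contribution by a bilinear term using the structural hypothesis on $p$, apply Gronwall for short time, and pass $z\to\infty$ at the end. Your extra care with the indicators $\mathbf{1}_{y+y'\leq z}$ versus $\mathbf{1}_{y'\leq z}$ and with using $M=\sup_{u>0}\|\langle w^2,P_u\mu_0\rangle\|_\infty$ (rather than $\|\langle w^2,\mu_0\rangle\|_\infty$, which does not obviously dominate the Duhamel term) is a cleanup of details the paper glosses over.
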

\begin{proof}
Again, we can copy the argument in \cite{BM}. For any $z>0$, Let
\begin{displaymath}
h(t)=\sup_{s\geq 0}\|\langle \mathbf{1}_{y<z}w^2,P_sq_t\rangle\|_\infty.
\end{displaymath}
Then we have
\begin{displaymath}
\langle \mathbf{1}_{y<z}w^2,P_sq_t\rangle\leq
\langle \mathbf{1}_{y<z}w^2,P_{s+t}(q_0)\rangle+\int_0^t\langle w^2,P_{s+t-r}K^\nu_r(q_r)\rangle dr
\end{displaymath}
and
\begin{align*}
&\langle \mathbf{1}_{y<z}w^2,P_{s+t-r}K^\nu_r(q_r)\rangle\\
&\leq\int_{\mathbb{R}^d} \int_0^\infty\int_0^z K(y,y')q_r(dy)\nu_r(dy')(\frac{yw^2(y+y')}{y+y'}p^{s+t-r,x',x}(y+y')-p^{s+t-r,x',x}(y)w^2(y))dx\\
&\leq C\int_{\mathbb{R}^d} \int_0^\infty\int_0^z K(y,y')q_r(dy)\nu_r(dy')(w(y)w(y')p^{s+t-r,x',x}(y)+w(y)w(y')p^{s+t-r,x',x}(y'))\\
&\leq w(y)w(y')q_r(dy)\nu_r(dy')(w(y)w(y')p^{s+t-r,x',x}(y)+w(y)w(y')p^{s+t-r,x',x}(y'))\\
&\leq C[\langle w^2,P_{s+t-r}\nu_r\rangle\|\langle \mathbf{1}_{y<z}w^2,q_r\rangle\|_\infty+\langle \mathbf{1}_{y<z}w^2,P_{s+t-r}q_r\rangle\|\langle w^2,\nu_r\rangle\|_\infty].
\end{align*}
So we have
\begin{displaymath}
h(t)\leq \|\langle w^2,q_0\rangle\|_{\infty}+2\int_0^tcCh(s)ds,
\end{displaymath}
for all $t\leq T$. Then we can use Gronwall's inequality to obtain
\begin{displaymath}
h(t)\leq \|\langle w^2,q_0\rangle\|_{\infty}e^{2cCt}.
\end{displaymath}
So, we can pick $c$ large and $T$ small such that
\begin{displaymath}
c\geq\|\langle w^2,q_0\rangle\|_{\infty}e^{2cCT}.
\end{displaymath}
As this works for all $z>0$, we conclude that if we have
\begin{displaymath}
\|\langle w^2,P_s\mu_t\rangle\|_{\infty}\leq c
\end{displaymath}
for all $s$ and $t\leq T$, then we also have
\begin{displaymath}
\|\langle w^2,P_sf(\mu)_t\rangle\|_{\infty}\leq c
\end{displaymath}
for all $s$ and $t\leq T$.
\end{proof}

Now, we will modify our argument for proving Theorem \ref{ssthm1} to prove Theorem \ref{ssthm2}. 
\begin{proof}
Suppose
\begin{displaymath}
\|\langle w^2,P_s\mu^i_t\rangle\|_{\infty}\leq c
\end{displaymath}
for all $s\geq 0$ and $t\leq T$.

Let $q^i=f(\mu^i)$ and let $\Phi^i_{s,t}$ be the map mapping $q^i_s$ to $q^i_t$ as defined earlier. Also let $K^i=K^{\mu^i}$. Then by variation of constants formula, we have
\begin{displaymath}
q^2_t-q^1_t=\int_0^t\Phi^1_{s,t}(K^2_s-K^1_s)(q^2_s)ds.
\end{displaymath}
Further, we would have
\begin{align*}
&\|\langle w,|(K^2_s-K^1_s)(q^2_s)|ds\rangle\|_1\\&\leq \int_\mathbb{R^d}\int_0^\infty\int_0^\infty|(\mu^2_s-\mu^1_s)(dy)|q^2_s(dy')K(y,y')(w(y')+\frac{y'}{y+y'}w(y+y'))\\
&\leq 2\int_\mathbb{R^d}\int_0^\infty\int_0^\infty|(\mu^2_s-\mu^1_s)(dy)|q^2_s(dy')K(y,y')w(y')\\
&\leq 2\int_\mathbb{R^d}\int_0^\infty\int_0^\infty|(\mu^2_s-\mu^1_s)(dy)|q^2_s(dy')w(y)w(y')^2\\
&\leq 2\|\langle w,|\mu^2_s-\mu^1_s|\rangle\|_1\|\langle w^2,q^2_s\rangle\|_\infty\\
&\leq 2c\|\langle w,|\mu^2_s-\mu^1_s|\rangle\|_1.
\end{align*}
Therefore, we obtain
\begin{displaymath}
\|\langle w,|q^2_t-q^1_t|\rangle\|_1\leq 2c\int_0^t\|\langle w,|\mu^2_s-\mu^1_s|\rangle\|_1ds.
\end{displaymath}
So, for $T$ sufficiently small, we would have $f$ is a contraction with respect to the metric $d_T(\mu^1,\mu^2)=\sup_{s\leq T}\|\langle w,|\mu^1-\mu^2|_s\rangle\|_1$ in the space of kernels $\mu$ with $
\sup_{t\leq T,s\geq 0}\|\langle w^2,P_s\mu_t\rangle\|_{\infty}\leq c
$.
By contraction mapping theorem, $f$ must have a fixed point and that fixed point is the solution we want. This ends the proof of Theorem \ref{ssthm2}.
\end{proof}
Now, we can modify the argument to prove Theorem \ref{ssthm3}
\begin{proof}
First, we assume (a) in Theorem \ref{ssthm3}, then we have
\begin{displaymath}
\langle wv,P_sq_t\rangle\leq \int_{\mathbb{R}^d}\int_0^R\int_0^R
\langle wv,P_{s+t}(q_0)\rangle+\int_0^t\langle wv,P_{s+t-r}K^\nu_r(q_r)\rangle dr.
\end{displaymath}
Note that, when $wvp$ is sublinear, $\langle wv,P_{s+t-r}K^\nu_r(q_r)\rangle$ is non-positive. So, we have
\begin{displaymath}
\langle wv,P_sq_t\rangle\leq \int_{\mathbb{R}^d}\int_0^R\int_0^R
\langle wv,P_{s+t}(q_0)\rangle\leq\sup_{t>0}\|\langle w^2,P_t(q_0)\rangle\|_\infty\leq c,
\end{displaymath}
for some constant $c$. Moreover, the inequality \begin{align*}
&\langle w^2,P_{s+t-r}K^\nu_r(q_r)\rangle\\
&\leq C[\langle w^2,P_{s+t-r}\nu_r\rangle\|\langle w^2,\mu_r\rangle\|_\infty+\langle w^2,P_{s+t-r}q_r\rangle\|\langle w^2,\nu_r\rangle\|_\infty]
\end{align*}
becomes now
\begin{align*}
&\langle w^2,P_{s+t-r}K^\nu_r(q_r)\rangle\\
&\leq C[\langle wv,P_{s+t-r}\nu_r\rangle\|\langle w^2,q_r\rangle\|_\infty+\langle wv,P_{s+t-r}q_r\rangle\|\langle w^2,\nu_r\rangle\|_\infty\\
&+\langle w^2,P_{s+t-r}\nu_r\rangle\|\langle wv,q_r\rangle\|_\infty+\langle w^2,P_{s+t-r}q_r\rangle\|\langle wv,\nu_r\rangle\|_\infty].
\end{align*}
Now, for constants $a,b>0$, if we have
\begin{displaymath}
\|\langle w^2,P_s\nu_t\rangle\|_{\infty}\leq ae^{bt}
\end{displaymath}
for all $s$ and $t$ then we also have
\begin{displaymath}
h(t)\leq c+2Cc\int_0^t(h_s+ae^{bs})ds\leq c+2\frac{aCc}{b}e^{bt}+2Cc\int_0^th_sds.
\end{displaymath}
By Grownwall's inequality, we have
\begin{displaymath}
h(t)\leq [c+2\frac{aCc}{b}e^{bt}]e^{2cCt}.
\end{displaymath}
So, for any $T>0$, we can choose $b$ to be sufficiently large such that if
\begin{displaymath}
\|\langle w^2,P_s\nu_t\rangle\|_{\infty}\leq ae^{bt}
\end{displaymath}
for all $t\leq T$, then
\begin{displaymath}
h(t)\leq ae^{bt}
\end{displaymath}
for all $t\leq T$.
Then we use the same argument as earlier to show that there is some $T'$ such that $f$ is a contraction with respect to $d_{T'}$ whenever $\sup_{s>0}\|\langle w^2,P_s\mu_t\rangle\|_{\infty}\leq ae^{bt}$ for all $0\leq t\leq T'$. So, we have existence of the solution up to time $T'$ and then by the same argument, we can extend the solution to $2T'$ and so on up to time $T$. As this works for any $T$, we have a global solution.

Now, we assume instead $(b)$ in Theorem \ref{ssthm3}. Recall
\begin{displaymath}
\langle w^2,P_sq_t\rangle\leq
\langle w^2,P_{s+t}(q_0)\rangle+\int_0^t\langle w^2,P_{s+t-r}K^\nu_r(q_r)\rangle dr,
\end{displaymath}
and
\begin{displaymath}
\langle w^2,P_{s+t-r}K^\nu_r(q_r)\rangle\leq C[\langle w^2,P_{s+t-r}\nu_r\rangle\|\langle w^2,q_r\rangle\|_\infty+\langle w^2,P_{s+t-r}q_r\rangle\|\langle w^2,\nu_r\rangle\|_\infty].
\end{displaymath}
Suppose $\sup_{s\geq 0}(1+s+t)^{1+\epsilon}\|\langle\mathbf{1}_{y\leq z}w^2,P_s\nu_t\rangle\|_\infty<2c$ and set 
\begin{displaymath}
h(t)=\sup_{s\geq 0}(1+s+t)^{1+\epsilon}\|\langle\mathbf{1}_{y\leq z}w^2,P_sq_t\rangle\|_\infty,
\end{displaymath} 
we obtain
\begin{displaymath}
h(t)\leq c+C\int_0^t\frac{4ch(r)}{(1+r)^{1+\epsilon}}dr.
\end{displaymath}
So, if $c$ is small enough such that $8c^2Ch\int_0^\infty\frac{1}{(1+r)^{1\epsilon}}dr<c$, then we have $h(t)<2c$ for all $t\geq 0$. By similar argument as earlier, we obtain global existence of the solution.
\end{proof}
\section{Well-posedness of the Smoluchowski coagulation equations with a drift term}
\subsection{Comparison with the case without the drift term}
In the previous two sections, we have seen how to establish well-posedness of Smoluchowski coagulation equations
\begin{displaymath}
\mu_t+\int_0^t P_{t-s}K^-(\mu_s)ds=P_t\mu_0+\int_0^t P_{t-s}K^+(\mu_s)ds,
\end{displaymath}
where 
\begin{displaymath}
P_t\mu(x,dy)=\int_{\mathbb{R}^d}\mu(x',dy)p^{t,x',x}(y)dx'.
\end{displaymath}
Here, $p^{t,x',x}(y)$ was defined to be the transition density of a Brownian particle with diffusivity $a(y)$. A natural question to ask would be what if $p$ is instead the transition density of a Brownian particle with a space dependent drift. More precisely, consider a particle whose free motion satisfies $X_0=x'$ and
\begin{displaymath}
dX_t=\sqrt{a(y)}dB_t+b(x,y)dt,
\end{displaymath}
with $b$ bounded and measurable in $x$, then we let $p^{t,x',x}(y)$ denote the probability density function of $X_t$ evaluated at $x$. In this section, we will investigate the well-posedness of (\ref{eqn}) in this case.

The key difference between the case without drift and the case with drift is that in the case without drift we know $p$ explicitly. We note that in the proof of Theorem \ref{ssthm1}, Theorem \ref{ssthm2} and Theorem \ref{ssthm3}, we did not use the explicit form of $p$, which means that the proofs also work in the case there is a drift. While Theorem \ref{ssthm1} is still a useful result about the uniqueness of the Smoluchowski PDEs, Theorem \ref{ssthm2} and Theorem \ref{ssthm3} can hardly be used because the conditions in these two theorems are usually not satisfied or hard to verify. For the case without drift, we have seen that there is a strong link between the well-posedness of the PDEs and the a priori estimates of the norm $\|\langle w^2,\mu\rangle\|_\infty$, and this link still exists for the case with a drift term. This link will be the key starting point of this section.
\subsection{Proof of Theorem \ref{driftcase2}}
The uniqueness part of the Theorem follows directly from Theorem \ref{driftcase}. For the existence part, we will try to modify the strategies we used in the last section. We will continue to use the same notations as in the last section. We first show an analogy of Proposition \ref{propbrownian}.
\begin{prop}
Assume
\begin{displaymath}
\frac{y}{y+y'}w^2(y+y')q(y+y')-w^2(y)q(y)\leq C[w(y)w(y')q(y)+w(y)w(y')q(y')],
\end{displaymath}
then we can find $c>0$ and $T>0$ so that \begin{displaymath}
\sup_{t\leq T}\sup_{s\geq 0}\|\langle w^2,Q_sf(\nu)_t\rangle\|_\infty\leq c\end{displaymath} whenever \begin{displaymath}
\sup_{t\leq T}\sup_{s\geq 0}\|\langle w^2,Q_s\nu_t\rangle\|_\infty\leq c.\end{displaymath}
\end{prop}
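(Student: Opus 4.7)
The plan is to adapt the proof of Proposition \ref{propbrownian}, keeping $P_{t-r}$ in the defining integral equation $q_t = P_t q_0 + \int_0^t P_{t-r} K^\nu_r(q_r)\,dr$ for $q_t = f(\nu)_t$, but composing with $Q_s$ on the outside rather than $P_s$. Fix $z > 0$ and set $h(t) = \sup_{s \geq 0} \|\langle \mathbf{1}_{y<z} w^2, Q_s q_t \rangle\|_\infty$. The aim is a Gronwall-type bound
\[
h(t) \leq \sup_{u \geq 0} \|\langle \mathbf{1}_{y<z} w^2, Q_u q_0\rangle\|_\infty + 2Cc \int_0^t h(r)\,dr,
\]
which yields $h(t) \leq \|\langle w^2, Q_u q_0\rangle\|_\infty e^{2Cct}$; choosing $c$ slightly larger than the assumed initial bound and $T$ small, and then letting $z \to \infty$, gives the conclusion.

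The two inputs from Theorem \ref{drift} are: (a) the push-toward-target density is extremal among adapted drifts of magnitude at most $B(y)$, giving $p^{t,x',x}(y) \leq q^{t,x',x}(y)$; and (b) the same principle applied to the composed $P$-then-$Q$ process (whose drift is still bounded by $B(y)$ and whose final target is the $Q$-target $x$) gives the sub-semigroup inequality
\[
R^{t-r,s,x',x}(y) := \int q^{s,x'',x}(y) \, p^{t-r,x',x''}(y) \, dx'' \;\leq\; q^{s+t-r,x',x}(y).
\]
Using (b) and the non-negativity of $q_t$ from Proposition \ref{ssprop32}, the initial-data term is handled by $\langle w^2, Q_s P_t q_0\rangle \leq \langle w^2, Q_{s+t} q_0\rangle$, which is already bounded by $c$ by hypothesis.

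For the coagulation term, a Fubini computation yields
\[
\langle \mathbf{1}_{y<z} w^2, Q_s P_{t-r} K^\nu_r(q_r)\rangle(x) = \int\!\int\!\int K(u,v) \bigl[ \tfrac{u}{u+v} w^2(u+v) R(u+v) - w^2(u) R(u) \bigr] \nu_r(x',dv)\,q_r(x',du)\,dx',
\]
with the composed density $R$ in place of the $p^{s+t-r}$ that appeared in the Brownian case. Using $R \leq q^{s+t-r,x',x}$ and invoking the hypothesis (\ref{drifteqn}) with $q = q^{s+t-r,x',x}$, the bracket should be controlled by $C w(u) w(v)[q^{s+t-r}(u) + q^{s+t-r}(v)]$; combined with $K(u,v) \leq w(u) w(v)$, this factorises into $C[\|\langle w^2, \nu_r\rangle\|_\infty \langle w^2, Q_{s+t-r} q_r\rangle(x) + \|\langle w^2, q_r\rangle\|_\infty \langle w^2, Q_{s+t-r} \nu_r\rangle(x)] \leq 2Cc\,h(r)$, closing the Gronwall loop.

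The main obstacle is exactly the bracket estimate: the hypothesis (\ref{drifteqn}) relies on the $K^{\nu+} - K^{\nu-}$ cancellation, so naively replacing $R(u+v)$ by its upper bound $q^{s+t-r}(u+v)$ in the positive term while keeping $R(u)$ in the negative term leaves an uncancelled residual $w^2(u)[q^{s+t-r}(u) - R(u)] \geq 0$. I expect to absorb this residual by bounding the negative piece through $R \geq 0$ and using condition (ii), under which $\langle w, \cdot\rangle \leq w(\delta)^{-1} \langle w^2, \cdot\rangle$, together with the sublinearity of $w$. This careful bookkeeping, forced on us by the failure of $Q$ to be a genuine semigroup, is the essential technical point distinguishing this argument from the cleaner proof of Proposition \ref{propbrownian}.
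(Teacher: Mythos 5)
Your proposal correctly identifies the key obstacle (the uncancelled residual) but the proposed fix does not close it, and the paper takes a genuinely different route around the problem. The paper does \emph{not} compose $Q_s$ with the integral equation $q_t = P_t q_0 + \int_0^t P_{t-r}K^\nu_r(q_r)\,dr$ in one large step. Instead it first establishes, as a separate lemma, the sub-semigroup inequality
\[
Q_s q_t \;\leq\; Q_{s+t}(q_0) + \int_0^t Q_{s+t-r}K^\nu_r(q_r)\,dr,
\]
via a time-discretization argument: subdivide $[0,t]$ into $n$ steps of length $h = t/n$; at each step use the comparison $Q_{s'-h}P_h \leq Q_{s'}$ applied to the non-negative kernel $q_{t'}$ (exactly your observation $R \leq q$ for a single small $P$-step); and replace $Q_{s'-h}P_{h-r}$ inside the coagulation integral by $Q_{s'-r}$ using the local H\"older continuity estimate from Theorem \ref{drift}, incurring an $O(h^{9/8})$ error per step. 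Summing and letting $h \to 0$ yields the displayed inequality exactly. After that the Gronwall argument runs verbatim as in Proposition \ref{propbrownian}, applying hypothesis (\ref{drifteqn}) with $q = q^{s+t-r,x',x}$. There is never a kernel $R$ to compare to $q^{s+t-r}$ in the coagulation bracket, so no residual appears.

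Your residual $w^2(u)\bigl[q^{s+t-r,x',x}(u) - R^{t-r,s,x',x}(u)\bigr]$ is a genuine gap in your version of the argument, and the fix you sketch does not work. After multiplying by $K(u,v) \leq w(u)w(v)$, dropping $-R(u)$ via $R\geq 0$, and using condition (ii) to bound $\langle w, \nu_r\rangle \leq w(\delta)^{-1}\|\langle w^2,\nu_r\rangle\|_\infty$, the residual contributes a term proportional to
\[
\int_{\mathbb{R}^d}\int_0^\infty w^3(u)\, q^{s+t-r,x',x}(u)\, q_r(x',du)\,dx' \;=\; \langle w^3, Q_{s+t-r}q_r\rangle(x).
\]
The Gronwall loop needs a bound in terms of $\langle w^2, Q_{s+t-r}q_r\rangle(x)$, but there is an extra factor of $w(u)$ that cannot be removed: $w$ is non-decreasing and unbounded in the cases of interest (e.g.\ $w(y)=c_1 y^u$ with $u>0$), so $w^3 q^{s+t-r}q_r$ is not dominated by a constant times $w^2 q^{s+t-r}q_r$, and sublinearity of $w$ gives no help in this direction. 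If you want to rescue the one-shot composition you would need a quantitative lower bound on $R/q^{s+t-r}$, which is precisely the information the paper extracts by discretizing so finely that the ratio is $1 + O(h^{1/8})$ on each step.
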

\begin{proof}
In the Brownian case, we had
\begin{displaymath}
P_sq_t=P_{s+t}(q_0)+\int_0^tP_{s+t-r}K^\nu_r(q_r)dr.
\end{displaymath}
As an analogy, we will show
\begin{displaymath}
Q_sq_t\leq Q_{s+t}(q_0)+\int_0^tQ_{s+t-r}K^\nu_r(q_r)dr.
\end{displaymath}
We will look at the evolution of $Q_{s}q_t$ keeping $s+t$ fixed. Consider $Q_{s-h}q_{t+h}-Q_sq_t$ for sufficiently small $h>0$. We know that
\begin{displaymath}
q_{t+h}=P_hq_t+\int_{0}^{h}P_{h-r}K^\nu_{t+r}(q_{t+r})dr
\end{displaymath} 
and thus
\begin{displaymath}
Q_{s-h}q_{t+h}=Q_{s-h}P_hq_t+\int_{0}^{h}Q_{s-h}P_{h-r}K^\nu_{t+r}(q_{t+r})dr.
\end{displaymath} 
By optimality result in Theorem \ref{drift}, we know that $Q_{s-h}P_{h}q_t\leq Q_{s}q_t$ and therefore,
\begin{displaymath}
Q_{s-h}q_{t+h}-Q_sq_t\leq \int_{0}^{h}Q_{s-h}P_{h-r}K^\nu_{t+r}(q_{t+r})dr.
\end{displaymath} 
Now, we want to approximate $Q_{s-h}P_{h-r}K^\nu_{t+r}(q_{t+r})$ with $Q_{s-r}K^\nu_{t+r}(q_{t+r})$.
When we showed H\"older continuity of $\rho$ in Theorem \ref{drift}, we actually showed that for $t_1>0$, we can find some constant $C>0$, such that for all $t_1<t_2\leq t_1+h$,
\begin{displaymath}
|\rho(t_2,y)-\rho(t_1,y)|\leq Ch^{1/8}\rho(t_2,y)+h.
\end{displaymath}
This essentially implies that 
\begin{displaymath}
|Q_{s-h}P_{h-r}K^{\nu_r}(q_r)-Q_{s-r}K^\nu_{t+r}(q_{t+r})|\leq Ch^{1/8}K^\nu_{t+r}(q_{t+r})+h\|K^\nu_{t+r}(q_{t+r})\|_1.
\end{displaymath}
Because we have assumed 
\begin{displaymath}
\sup_{t\leq T}\sup_{s\geq 0}\|\langle w^2,Q_s\nu_t\rangle\|_\infty\leq c,
\end{displaymath}
we know that $K^\nu_{t+r}(q_{t+r})$ and $\|K^\nu_{t+r}(q_{t+r})\|_1$ are both bounded. Therefore, we have
\begin{displaymath}
Q_{s-h}q_{t+h}-Q_sq_t\leq \int_{0}^{h}Q_{s-r}K^\nu_{t+r}(q_{t+r})dr+Ch^{9/8}.
\end{displaymath} 
Now, let $h=t/n$ for some sufficiently large integer $n$ and for $0\leq m<n$, we have
\begin{displaymath}
Q_{s+mh}q_{t-mh}-Q_{s+(m+1)h}q_{t-(m+1)h}\leq \int_{0}^{h}Q_{s+(m+1)h-r}K^\nu_{t+r}(q_{t-(m+1)h+r})dr+Ch^{9/8}.
\end{displaymath} 
Summing over all $m$ and let $h\rightarrow 0$, we conclude 
\begin{displaymath}
Q_sq_t\leq Q_{s+t}(q_0)+\int_0^tQ_{s+t-r}K^\nu_r(q_r)dr.
\end{displaymath}
To complete the rest of the proof, we literally only need to change $P$ into $Q$ in the proof of Proposition \ref{propbrownian}. For any $z>0$, Let
\begin{displaymath}
h(t)=\sup_{s\geq 0}\|\langle \mathbf{1}_{y<z}w^2,Q_sq_t\rangle\|_\infty.
\end{displaymath}
Then we have
\begin{displaymath}
\langle \mathbf{1}_{y<z}w^2,Q_sq_t\rangle\leq
\langle \mathbf{1}_{y<z}w^2,Q_{s+t}(q_0)\rangle+\int_0^t\langle w^2,Q_{s+t-r}K^\nu_r(q_r)\rangle dr
\end{displaymath}
and
\begin{align*}
&\langle \mathbf{1}_{y<z}w^2,Q_{s+t-r}K^\nu_r(q_r)\rangle\\
&\leq\int_{\mathbb{R}^d} \int_0^\infty\int_0^z K(y,y')q_r(dy)\nu_r(dy')(\frac{yw^2(y+y')}{y+y'}q^{s+t-r,x',x}(y+y')-q^{s+t-r,x',x}(y)w^2(y))dx\\
&\leq C\int_{\mathbb{R}^d} \int_0^\infty\int_0^z K(y,y')q_r(dy)\nu_r(dy')(w(y)w(y')q^{s+t-r,x',x}(y)+w(y)w(y')q^{s+t-r,x',x}(y'))\\
&\leq w(y)w(y')q_r(dy)\nu_r(dy')(w(y)w(y')q^{s+t-r,x',x}(y)+w(y)w(y')q^{s+t-r,x',x}(y'))\\
&\leq C[\langle w^2,Q_{s+t-r}\nu_r\rangle\|\langle \mathbf{1}_{y<z}w^2,q_r\rangle\|_\infty+\langle \mathbf{1}_{y<z}w^2,Q_{s+t-r}q_r\rangle\|\langle w^2,\nu_r\rangle\|_\infty].
\end{align*}
So, we have
\begin{displaymath}
h(t)\leq \|\langle w^2,q_0\rangle\|_{\infty}+2\int_0^tcCh(s)ds,
\end{displaymath}
for all $t\leq T$. Then we can use Gronwall's inequality to obtain
\begin{displaymath}
h(t)\leq \|\langle w^2,q_0\rangle\|_{\infty}e^{2cCt}.
\end{displaymath}
So, we can pick $c$ large and $T$ small such that
\begin{displaymath}
c\geq\|\langle w^2,q_0\rangle\|_{\infty}e^{2cCT}.
\end{displaymath}
As this works for all $z>0$, we conclude that if we have
\begin{displaymath}
\|\langle w^2,Q_s\mu_t\rangle\|_{\infty}\leq c
\end{displaymath}
for all $s$ and $t\leq T$, then we also have
\begin{displaymath}
\|\langle w^2,Q_sf(\mu)_t\rangle\|_{\infty}\leq c
\end{displaymath}
for all $s$ and $t\leq T$.
\end{proof}
The rest of the proof of Theorem \ref{driftcase2} will be similar as in the Brownian case too.
Suppose
\begin{displaymath}
\|\langle w^2,Q_s\mu^i_t\rangle\|_{\infty}\leq c
\end{displaymath}
for all $s\geq 0$ and $t\leq T$.

For $i=1,2$, let $q^i=f(\mu^i)$ and let $\Phi^i_{s,t}$ be the map mapping $q^i_s$ to $q^i_t$ as defined earlier. Also let $K^i=K^{\mu^i}$. Then by variation of constants formula we have
\begin{displaymath}
q^2_t-q^1_t=\int_0^t\Phi^1_{s,t}(K^2_s-K^1_s)(q^2_s)ds.
\end{displaymath}
Further we would have
\begin{align*}
&\|\langle w,|(K^2_s-K^1_s)(q^2_s)|ds\rangle\|_1\\
&\leq 2\|\langle w,|\mu^2_s-\mu^1_s|\rangle\|_1\|\langle w^2,q^2_s\rangle\|_\infty\\
&\leq 2c\|\langle w,|\mu^2_s-\mu^1_s|\rangle\|_1.
\end{align*}
Therefore, we have
\begin{displaymath}
\|\langle w,|q^2_t-q^1_t|\rangle\|_1\leq 2c\int_0^t\|\langle w,|\mu^2_s-\mu^1_s|\rangle\|_1ds.
\end{displaymath}
So, for $T$ sufficiently small, we would have $f$ is a contraction with respect to the metric $d_T(\mu^1,\mu^2)=\sup_{s\leq T}\|\langle w,|\mu^1-\mu^2|_s\rangle\|_1$ in the space of kernels $\mu$ with $
\sup_{t\leq T,s\geq 0}\|\langle w^2,Q_s\mu_t\rangle\|_{\infty}\leq c
$.
By contraction mapping theorem, $f$ must have a fixed point and that fixed point is the solution we want. 
In the case when $K(y,y')\leq w(y)v(y')+w(y')v(y)$ for some $v$ such that $wvq$ is sublinear, the proof is again similar as in the Brownian case. 
\subsection{Proof of lemma \ref{driftlem}}
In this part, we will look at some properties of $q$. We know $q$ explicitly from Theorem \ref{drift}:
\begin{displaymath}
q^{t,x,x'}(y)= \frac{1}{(2\pi a(y)t)^{d/2}}\prod^d_{i=1}\int_{|x^i-x'^i|/\sqrt{a(y)t}}^\infty ze^{-(z-B(y)\sqrt{t/a(y)})^2/2}dz,
\end{displaymath}
and we can rewrite it as 
\begin{displaymath}
q^{t,x,x'}(y)= \frac{1}{(2\pi a(y)t)^{d/2}}\prod^d_{i=1}\int_{|x^i-x'^i|/\sqrt{a(y)t}-B(y)\sqrt{t/a(y)}}^\infty \big(z+B(y)\sqrt{t/a(y)}\big)e^{-z^2/2}dz.
\end{displaymath}
If $B/\sqrt{a}$ is non-increasing and $a$ is non-increasing, then the integrand will also be non-increasing. So, we conclude that for $y>y'>0$,
\begin{displaymath}
q(y)/q(y')\leq [a(y)/a(y')]^{-\frac{d}{2}}.
\end{displaymath}
When $a$ is non increasing, $B$ is non-increasing and $B/\sqrt{a}$ is non-decreasing, we have for $y>y'$, if 
\begin{displaymath}
|x^i-x'^i|/\sqrt{a(y)t}-B(y)\sqrt{t/a(y)}> 0,
\end{displaymath}
then
\begin{align*}
|x^i-x'^i|/\sqrt{a(y')t}-B(y')\sqrt{t/a(y')}&=\frac{|x^i-x'^i|/\sqrt{t}-B(y')\sqrt{t}}{\sqrt{a(y')}}\\
&\leq \frac{|x^i-x'^i|/\sqrt{t}-B(y)\sqrt{t}}{\sqrt{a(y)}}\\&=|x^i-x'^i|/\sqrt{a(y)t}-B(y)\sqrt{t/a(y)}.
\end{align*}
Note that $\frac{z+B(y)\sqrt{t/a(y)}}{z+B(y')\sqrt{t/a(y')}}$ is non-increasing in $z\geq 0$, we have
\begin{align*}
&\frac{\int_{|x^i-x'^i|/\sqrt{a(y)t}-B(y)\sqrt{t/a(y)}}^\infty (z+B(y)\sqrt{t/a(y)})e^{-z^2/2}dz}{\int_{|x^i-x'^i|/\sqrt{a(y')t}-B(y')\sqrt{t/a(y')}}^\infty (z+B(y')\sqrt{t/a(y')})e^{-z^2/2}dz}\\
&\leq \frac{\int_{|x^i-x'^i|/\sqrt{a(y)t}-B(y)\sqrt{t/a(y)}}^\infty (z+B(y)\sqrt{t/a(y)})e^{-z^2/2}dz}{\int_{|x^i-x'^i|/\sqrt{a(y)t}-B(y')\sqrt{t/a(y)}}^\infty (z+B(y')\sqrt{t/a(y')})e^{-z^2/2}dz}\\
&\leq\frac{\int_0^\infty (z+B(y)\sqrt{t/a(y)})e^{-z^2/2}dz}{\int_0^\infty (z+B(y')\sqrt{t/a(y')})e^{-z^2/2}dz}\\
&\leq \frac{B(y)\sqrt{1/a(y)}}{B(y')\sqrt{1/a(y')}}.
\end{align*}
If
\begin{displaymath}
|x^i-x'^i|/\sqrt{a(y)t}-B(y)\sqrt{t/a(y)}\leq 0,
\end{displaymath}
we would have
\begin{align*}
&\frac{\int_{|x^i-x'^i|/\sqrt{a(y)t}-B(y)\sqrt{t/a(y)}}^\infty (z+B(y)\sqrt{t/a(y)})e^{-z^2/2}dz}{\int_{|x^i-x'^i|/\sqrt{a(y')t}-B(y')\sqrt{t/a(y')}}^\infty (z+B(y')\sqrt{t/a(y')})e^{-z^2/2}dz}\\
&\leq \frac{\int_{|x^i-x'^i|/\sqrt{a(y)t}-B(y)\sqrt{t/a(y)}}^\infty (z+B(y)\sqrt{t/a(y)})e^{-z^2/2}dz}{\int_{|x^i-x'^i|/\sqrt{a(y)t}-B(y')\sqrt{t/a(y')}}^\infty (z+B(y')\sqrt{t/a(y')})e^{-z^2/2}dz}\\
&\leq \frac{\int_{-B(y)\sqrt{t/a(y)}}^\infty (z+B(y)\sqrt{t/a(y)})e^{-z^2/2}dz}{\int_{-B(y')\sqrt{t/a(y')}}^\infty (z+B(y')\sqrt{t/a(y')})e^{-z^2/2}dz}.
\end{align*}
Substitute $u=B(y')\sqrt{t/a(y')}$ and let $f(u)=\int_{-u}^\infty(z+u)e^{-z^2/2}dz$, we have $\frac{d(f/u)}{du}<0$, so we can again conclude
\begin{displaymath}
\frac{\int_{|x^i-x'^i|/\sqrt{a(y)t}-B(y)\sqrt{t/a(y)}}^\infty (z+B(y)\sqrt{t/a(y)})e^{-z^2/2}dz}{\int_{|x^i-x'^i|/\sqrt{a(y')t}-B(y')\sqrt{t/a(y')}}^\infty (z+B(y')\sqrt{t/a(y')})e^{-z^2/2}dz}\leq \frac{B(y)\sqrt{1/a(y)}}{B(y')\sqrt{1/a(y')}}.
\end{displaymath}.
So, we have  $q_{x'}^{t,x,x'}(y)/q_{x'}^{t,x,x'}(y')\leq (\frac{B(y)/a(y)}{B(y')/a(y')})^d$ as required.
\section{Acknowledgement}
I would like to express my gratitude to my supervisor, Prof. J. Norris, for suggesting the problem and for the insightful discussions and comments on the topic. My thanks also go to my fellow students for providing an atmosphere in which it is a pleasure to work. I am supported jointly by UK Engineering and Physical Sciences Research Council (EPSRC) grant.

\newpage
\nocite{*} 
\bibliographystyle{plain}

\end{document}